\documentclass[11pt,reqno]{amsart}

\usepackage{amsfonts}
\usepackage{eurosym}
\usepackage{amssymb}
\usepackage{amsthm}
\usepackage{amsmath}
\usepackage{amsaddr}
\usepackage{bm}
\usepackage{cite}
\usepackage{mathrsfs}
\usepackage{xcolor}
\usepackage[OT1]{fontenc}
\usepackage[left=2.5cm, right=2.5cm, top=3cm, bottom=2.5cm]{geometry}
\usepackage{hyperref}
\usepackage{mathtools}
\hypersetup{colorlinks=true, linkcolor=blue, citecolor=red, urlcolor=blue}
\usepackage{comment}
\usepackage[T1]{fontenc}

\usepackage{times}

\usepackage{enumitem}
\usepackage{xpatch}
\makeatletter
\AtBeginDocument{\xpatchcmd{\@thm}{\thm@headpunct{.}}{\thm@headpunct{}}{}{}}
\makeatother

\flushbottom
\allowdisplaybreaks
\newtheorem{theorem}{Theorem}[section]

\newtheorem{corollary}[theorem]{Corollary}
\newtheorem{definition}[theorem]{Definition}
\newtheorem{lemma}[theorem]{Lemma}
\newtheorem{proposition}[theorem]{Proposition}

\theoremstyle{remark}
\newtheorem{remark}[theorem]{Remark}

\numberwithin{equation}{section}

\usepackage{parskip}
\setlength\parskip{1ex}
\setlength\parindent{3ex}


\newcommand{\abs}[1]{\left| #1 \right|}

\newcommand{\norm}[1]{\| #1 \|}
\newcommand{\bignorm}[1]{\big\| #1 \big\|}

\newcommand{\ang}[2]{ \langle #1 , #2  \rangle}
\newcommand{\bigang}[2]{ \big< #1 , #2  \big>}

\newcommand{\scp}[2]{ \left( #1 , #2  \right)}
\newcommand{\bigscp}[2]{\big( #1 , #2 \big)}

\newcommand{\meano}[1]{{\langle #1 \rangle}_{\Omega}}
\newcommand{\meang}[1]{{\langle #1 \rangle}_{\Gamma}}
\newcommand{\mean}[2]{\textnormal{mean}\scp{#1}{#2}}

\newcommand{\R}{\mathbb R}
\newcommand{\N}{\mathbb N}
\newcommand{\n}{\mathbf{n}}

\newcommand{\intO}{\int_\Omega}

\newcommand{\intG}{\int_\Gamma}

\newcommand{\dtau}{\;\mathrm d\tau}

\newcommand{\dx}{\;\mathrm{d}x}

\newcommand{\ds}{\;\mathrm ds}

\newcommand{\dxs}{\;\mathrm{d}x\;\mathrm{d}s}
\newcommand{\dGs}{\;\mathrm{d}\Ga\;\mathrm{d}s}

\newcommand{\dG}{\;\mathrm d\Ga}

\newcommand{\ddt}{\frac{\mathrm d}{\mathrm dt}}

\newcommand{\del}{\partial}
\newcommand{\delt}{\partial_{t}}
\newcommand{\delth}{\partial_{t}^{h}}

\newcommand{\deln}{\partial_\n}

\newcommand{\Grad}{\nabla}
\newcommand{\Lap}{\Delta}
\newcommand{\Div}{\textnormal{div}}
\newcommand{\Gradg}{\nabla_\Ga}
\newcommand{\Lapg}{\Delta_\Ga}
\newcommand{\Divg}{\textnormal{div}_\Ga}

\newcommand{\emb}{\hookrightarrow}

\newcommand{\suchthat}{\;\ifnum\currentgrouptype=16 \middle\fi|\;}

\newcommand{\Om}{\Omega}
\newcommand{\Ga}{\Gamma}

\definecolor{rosso}{rgb}{0.8,0,0}
\definecolor{violet}{rgb}{0.65,0,0.65}
\definecolor{darkgreen}{rgb}{0,0.5,0}

\DeclareMathOperator*{\esssup}{ess\,sup}



\def \no#1#2#3 {{\bf #1} (#3), #2.}
\def \eds#1#2#3 {#1, #2, #3.}

\makeatletter
\def\@settitle{\begin{center}%
  \baselineskip14\p@\relax
    \huge
  \@title
  \end{center}%
}
\makeatother


\begin{document}

\title[Regularity of a convective bulk-surface Cahn--Hilliard model]
{\emph{Regularity of a bulk-surface
Cahn--Hilliard model \\ driven by Leray velocity fields}}
\author[Andrea Giorgini, Patrik Knopf \& Jonas Stange]{Andrea Giorgini, Patrik Knopf \& Jonas Stange}

\address{Politecnico di Milano\\
Dipartimento di Matematica\\
Via E. Bonardi 9, 20133 Milano, Italy\\
\href{mailto:andrea.giorgini@polimi.it}{andrea.giorgini@polimi.it}}

\address{Fakult\"{a}t f\"{u}r Mathematik\\ 
Universit\"{a}t Regensburg \\
93040 Regensburg, Germany\\
\href{mailto:patrik.knopf@ur.de}{patrik.knopf@ur.de}\\
\href{mailto:jonas.stange@ur.de}{jonas.stange@ur.de}
}

\subjclass[2020]{35K35, 35D30, 35A01, 35A02, 35Q92, 35B65}

%

%
\keywords{convective Cahn--Hilliard equation, bulk-surface interaction, dynamic boundary conditions, regularity.}


\begin{abstract}
We consider a convective bulk-surface Cahn--Hilliard system with dynamic boundary conditions and singular potentials. For this model, well-posedness results concerning weak and strong solutions have already been established in the literature. However, they require the prescribed velocity fields to belong to function spaces with high time regularity. In this paper, we prove that the well-posedness of weak solutions holds true under more general regularity assumptions on the velocity fields. Next, via an alternative proof for higher regularity, we show the well-posedness of strong solutions for velocity fields of Leray type, which is a more relevant assumption for physical applications. Our approach hinges upon a new well-posedness and regularity theory for a bulk-surface elliptic system with singular nonlinearities, which may be of independent interest.
\end{abstract}

\maketitle

\section{Introduction}
\label{SECT:INTRO}

The Cahn--Hilliard equation was originally introduced to model spinodal decomposition processes in binary alloys. Later, it turned out that the Cahn--Hilliard equation can also be used to describe later stages of the evolution of phase transition phenomena such as Ostwald ripening.
Nowadays,  it is the most popular diffuse interface model with diverse applications in materials science, life sciences and image processing. 
A crucial advantage of Cahn--Hilliard models compared with sharp-interface models (free boundary problems) is the purely Eularian formulation, which does not require any direct tracking of the interfaces separating the involved materials. As such, topological changes of the interfaces can be handled without imposing artificial conditions.

The Cahn--Hilliard equation describes the time evolution of a phase-field $\phi$, with associated chemical potential $\mu$. The function $\phi$ is a conserved order parameter, which typically represents the concentration difference of the two components of a binary mixture.
In the standard formulation of the Cahn--Hilliard system in a bounded domain, homogeneous Neumann boundary conditions for both $\phi$ and $\mu$ are classically imposed in the study of the initial-boundary value problem. However, as already been observed by mathematicians and physicists (see, e.g., \cite{Binder1991,Fischer1997,Fischer1998,Kenzler2001}), these boundary conditions entail crucial limitations when a precise description of the dynamics close to the boundary is required.
The main drawback is that the homogeneous Neumann boundary condition on the phase-field enforces the diffuse interfaces between the two components to intersect the boundary of the domain at a perfect right angle. In most applications, this is very unrealistic as the contact angle is expected to deviate from ninety degrees, and to even change dynamically over the course of time. Furthermore, the homogeneous Neumann boundary condition on the chemical potential prohibits any transfer of material between the bulk and the boundary. In some applications this is actually desired as the bulk mass is supposed to be conserved. On the other hand, when absorption processes or chemical reactions at the boundary are considered, an exchange of material between bulk and boundary needs to be included in the formulation.

Due to these limitations of the classical Cahn--Hilliard problem, and also to describe short-range interactions between bulk and surface more effectively, 
several Cahn--Hilliard models with dynamic boundary conditions have been proposed and analyzed in the literature. In particular, an evolutionary equation of Cahn--Hilliard type on the boundary has become very popular in recent years (see, e.g., \cite{Goldstein2011,Liu2019,Knopf2021a,Knopf2020}), which also account for a transfer of material between bulk and boundary. These models are sometimes also referred as bulk-surface Cahn--Hilliard systems.
In this paper, we study the following convective bulk-surface Cahn--Hilliard model:%
\begin{subequations}\label{EQ:SYSTEM}
    \begin{align}
        \label{EQ:SYSTEM:1}
        &\delt\phi + \Div(\phi\boldsymbol{v}) = \Div(m_\Om(\phi)\Grad\mu) && \text{in} \ Q, \\
        \label{EQ:SYSTEM:2}
        &\mu = -\Lap\phi + F'(\phi)   && \text{in} \ Q, \\
        \label{EQ:SYSTEM:3}
        &\delt\psi + \Divg(\psi\boldsymbol{w}) = \Divg(m_\Ga(\psi)\Gradg\theta) - \beta m_\Om(\phi)\deln\mu && \text{on} \ \Sigma, \\
        \label{EQ:SYSTEM:4}
        &\theta = - \Lapg\psi + G'(\psi) + \alpha\deln\phi && \text{on} \ \Sigma, \\
        \label{EQ:SYSTEM:5}
        &\begin{cases} K\deln\phi = \alpha\psi - \phi &\text{if} \ K\in [0,\infty), \\
        \deln\phi = 0 &\text{if} \ K = \infty
        \end{cases} && \text{on} \ \Sigma, \\
        \label{EQ:SYSTEM:6}
        &\begin{cases} 
        L m_\Om(\phi)\deln\mu = \beta\theta - \mu &\text{if} \  L\in[0,\infty), \\
        m_\Om(\phi)\deln\mu = 0 &\text{if} \ L=\infty
        \end{cases} &&\text{on} \ \Sigma, \\
        \label{EQ:SYSTEM:7}
        &\phi\vert_{t=0} = \phi_0 &&\text{in} \ \Om, \\
        \label{EQ:SYSTEM:8}
        &\psi\vert_{t=0} = \psi_0 &&\text{on} \ \Ga.
    \end{align}
\end{subequations}
This model was first proposed in \cite{Knopf2024} and further analyzed in \cite{Knopf2024a}. It offers a unified framework for various Cahn–Hilliard-type dynamic boundary conditions previously introduced in the literature.

In System~\eqref{EQ:SYSTEM}, $\Om\subset\R^d$ with $d\in\{2,3\}$ is a bounded domain with boundary $\Ga\coloneqq\partial\Om$ and outward unit normal vector field $\n$. Moreover, $T>0$ denotes a prescribed final time and we write $Q\coloneqq \Om\times(0,T)$ and $\Sigma\coloneqq\Ga\times(0,T)$. 
The symbols $\Gradg$, $\Divg$ and $\Lapg$ denote the tangential surface gradient, the surface divergence, and the Laplace-Beltrami operator on $\Ga$, respectively. 

System \eqref{EQ:SYSTEM} is referred to as a \textit{convective} Cahn--Hilliard model as the dynamics are influenced by prescribed velocity fields $\boldsymbol{v}:Q\rightarrow\R^d$ and $\boldsymbol{w}:\Sigma\rightarrow\R^d$.  
As in \cite{Knopf2024}, we assume that $\Div(\boldsymbol{v}) = 0$ in $Q$ and that
$\Divg(\boldsymbol{w}) = 0$, $\boldsymbol{v} \cdot \n = 0$ and $\boldsymbol{w} \cdot \n = 0$ on $\Sigma$.

The functions $\phi:Q\rightarrow\R$ and $\mu:Q\rightarrow\R$ denote the phase-field and the chemical potential in the bulk, whereas $\psi:\Sigma\rightarrow\R$ and $\theta:\Sigma\rightarrow\R$ represent the phase-field and the chemical potential on the boundary, respectively. The functions $m_\Om(\phi)$ and $m_\Ga(\psi)$ represent the mobilities of the mixtures in the bulk and on the boundary, respectively. They model the spatial locations and intensity to which the diffusion processes take place.
The phase-fields $\phi$ and $\psi$ are coupled by the boundary condition \eqref{EQ:SYSTEM:5}, and the chemical potentials $\mu$ and $\theta$ are coupled by the boundary condition \eqref{EQ:SYSTEM:6}. Here, the parameters $K,L\in [0,\infty]$ are used to distinguish different types of these coupling conditions.

The functions $F^\prime$ and $G^\prime$ are the derivatives of double-well potentials $F$ and $G$, respectively.
A physically motivated example of such a double-well potential, especially in applications related to material science, is the \textit{Flory--Huggins potential}, which is also referred to as the \textit{logarithmic potential}. It is defined as
\begin{align}\label{DEF:W:LOG}
	W_{\textup{log}}(s) \coloneqq \frac{\Theta}{2}\big[(1+s)\ln(1+s) + (1-s)\ln(1-s)\big] -\frac{\Theta_c}{2}s^2, \quad s\in(-1,1).
\end{align}
Here, we assume $0 < \Theta < \Theta_c$, where $\Theta$ denotes the temperature of the mixture and $\Theta_c$ represents the critical temperature below which phase separation processes occur. We point out that $W_{\textup{log}}$ can be continuously extended onto the interval $[-1,1]$. However, since $W_{\textup{log}}'(s) \to \pm\infty$ as $s\to \pm 1$, $W_{\textup{log}}$ is classified as a singular potential. In this paper, we consider a general class of singular potentials (see Subsection~\ref{SUBSEC:ASS}) such that, for example, the choice $F=G=W_{\textup{log}}$ is admissible.

The free energy functional associated with System~\eqref{EQ:SYSTEM} reads as
\begin{align}
	\label{INTRO:ENERGY}
	\begin{split}
		E_K(\phi,\psi) &= \intO \frac12\abs{\Grad\phi}^2 + F(\phi) \dx + \intG \frac12\abs{\Gradg\psi}^2 + G(\psi) \dG \\
		&\quad + \sigma(K)\intG\frac{1}{2}\abs{\alpha\psi - \phi}^2\dG,
	\end{split}
\end{align}
where, to account for the different cases corresponding to the choice of $K$, the function $\sigma$ is defined 
\begin{align*}
	\sigma(K) \coloneqq
	\begin{cases}
		K^{-1}, &\text{if } K\in (0,\infty), \\
		0, &\text{if } K\in\{0,\infty\}.
	\end{cases}
\end{align*}
We observe that sufficiently regular solutions of System~\eqref{EQ:SYSTEM} satisfy the \textit{mass conservation law}
\begin{align}
	\label{INTRO:MASS}
	\begin{dcases}
		\beta\intO \phi(t)\dx + \intG \psi(t)\dG = \beta\intO \phi_0 \dx + \intG \psi_0\dG, &\textnormal{if } L\in[0,\infty), \\
		\intO\phi(t)\dx = \intO\phi_0\dx \quad\textnormal{and}\quad \intG\psi(t)\dG = \intG\psi_0\dG, &\textnormal{if } L = \infty
	\end{dcases}
\end{align}
for all $t\in[0,T]$ and the \textit{energy identity}
\begin{align}
	\label{INTRO:ENERGY:ID}
	\begin{split}
		\ddt E_K(\phi,\psi)  &= \intO\phi\boldsymbol{v}\cdot\Grad\mu\dx + \intG \psi\boldsymbol{w}\cdot\Gradg\theta\dG 
		- \intO m_\Om(\phi)\abs{\Grad\mu}^2\dx\\
		&\qquad
		- \intG m_\Ga(\psi)\abs{\Gradg\theta}^2\dG
		- \sigma(L) \intG (\beta\theta-\mu)^2\dG 
	\end{split}
\end{align}
on $(0,T)$. In the non-convective case $(\boldsymbol{v},\boldsymbol{w})\equiv (\boldsymbol{0},\boldsymbol{0})$, the right-hand side of \eqref{INTRO:ENERGY:ID} is non-positive as the first two terms vanish. 

\textbf{State of the art and related literature.}
The \textit{non-convective} case of System~\eqref{EQ:SYSTEM}, corresponding to $\boldsymbol{v}\equiv 0$ and $\boldsymbol{w}\equiv 0$, has been the subject of extensive investigation in the literature.
We refer, for example, to 
\cite{Garcke2020,Garcke2022,Colli2020,Colli2022,Colli2022a,Fukao2021,Miranville2020,Lv2024,Lv2024a,Lv2024b} 
for the mathematical analysis and to
\cite{Metzger2021,Metzger2023,Harder2022,Meng2023,Bao2021,Bao2021a,Bullerjahn2024,Bullerjahn2025}
for the numerical analysis and simulations. A related nonlocal bulk-surface Cahn--Hilliard model was studied in \cite{Knopf2021b,Lv2025}.

The general \textit{convective} version of \eqref{EQ:SYSTEM} (i.e., with non-trivial velocity fields) was recently analyzed for in \cite{Knopf2024} for regular potentials, and in \cite{Knopf2024a} for singular potentials. In fact, System~\eqref{EQ:SYSTEM} appears as a subsystem in models for two-phase flows with bulk-surface interaction. A Navier--Stokes--Cahn--Hilliard model with dynamic boundary conditions based on \eqref{EQ:SYSTEM} was derived in \cite{Giorgini2023} and analyzed in \cite{Giorgini2023,Gal2024}, and a related Cahn--Hilliard--Brinkman model with dynamic boundary conditions was investigated in \cite{Colli2024}.

For a general overview on Cahn--Hilliard type models with classical or dynamic boundary conditions, we refer to the review paper \cite{Wu2022} as well as the book \cite{Miranville-Book}.

\textbf{Goals and novelties of this paper.}
The well-posedness of weak and strong solutions of System~\eqref{EQ:SYSTEM} as well as further regularity properties were already established in \cite{Knopf2024a}. These results require the velocity fields to satisfy restrictive regularity assumptions, which might not be satisfied if the velocity fields are not simply prescribed, but are determined by additional coupled evolution equations.
This motivates us to establish the following main results. 

\begin{enumerate}[label = \textbf{(\Roman*)}, leftmargin=*]
    \item\label{MR1} 
    \textbf{Existence of a weak solution and further regularity.} 
    In Theorem~\ref{THEOREM:EOWS} we improve the weak well-posedness result in \cite[Theorem~3.4]{Knopf2024a}. In contrast to the previous result in \cite{Knopf2024a}, where the velocity fields are such that
    \begin{equation}
    \label{INTRO:VEL:WEAK:OLD}
        \boldsymbol{v}\in L^2(0,T;\mathbf{L}^{3}_\Div(\Om))
        \quad\text{and}\quad
        \boldsymbol{w}\in L^2(0,T;\mathbf{L}^{2+\omega}_\tau(\Ga))
        \quad\text{for some $\omega > 0$},
    \end{equation}
    while our new result requires the velocity fields to satisfy the weaker condition
    \begin{equation}
    \label{INTRO:VEL:WEAK}
        \boldsymbol{v}\in L^2(0,T;\mathbf{L}^{2}_\Div(\Om))
        \quad\text{and}\quad
        \boldsymbol{w}\in L^2(0,T;\mathbf{L}^{2}_\tau(\Ga)).
    \end{equation}
    The definition of the spaces $\mathbf{L}^p_\Div(\Om)$ and $\mathbf{L}^{p}_\tau(\Ga)$ 
    for any $p\in [2,\infty)$ can be found in Subsection~\ref{SUBSECT:NOTFS}.

    Moreover, Theorem~\ref{THM:TIMEREG} provides an additional regularity result for weak solutions, which improves \cite[Theorem~3.4]{Knopf2024a} by providing a higher time regularity for the phase-fields $\phi$ and $\psi$. 
    At first glance, this seems to be only a very minor improvement. However, especially in Navier--Stokes--Cahn--Hilliard models (e.g., the one from \cite{Giorgini2023}), where \eqref{EQ:SYSTEM} appears as a subsystem, these enhanced time regularities become crucial in the mathematical analysis as they help to estimate the nonlinear terms.

    \item\label{MR2}  
    \textbf{Continuous dependence on the initial data and the velocity fields.}
    In Theorem~\ref{THEOREM:UNIQUE:SING}, we further prove the uniqueness and the continuous dependence of the weak solution on the initial data and the velocity fields along with a concrete stability estimate. This result improves the result of \cite[Theorem~3.6]{Knopf2024a} since one of the velocity fields only needs to satisfy \eqref{INTRO:VEL:WEAK} instead of \eqref{INTRO:VEL:WEAK:OLD}, and our new stability estimate merely uses the differences of the velocity fields in the $L^2(0,T;\mathbf{L}^2(\Omega)\times \mathbf{L}^2(\Gamma))$-norm instead of the $L^2(0,T;\mathbf{L}^3(\Omega)\times \mathbf{L}^{2+\omega}(\Gamma))$-norm. 

    \item\label{MR3}  
    \textbf{Existence of a strong solution and further regularity.}
    In \cite[Theorem~3.7]{Knopf2024a}, the existence of a strong solution to System~\eqref{EQ:SYSTEM} was established for velocity fields, which satisfy
    \begin{align}
        \label{INTRO:VEL:STRONG:OLD}
        \left\{
        \begin{aligned}
        \boldsymbol{v} &\in H^1(0,T;\mathbf{L}^{6/5}(\Om))\cap L^2(0,T;\mathbf{L}_\Div^3(\Om))\cap L^\infty(0,T;\mathbf{L}^2(\Om)),
        \\
        \boldsymbol{w}&\in H^1(0,T;\mathbf{L}^{1+\omega}(\Ga))\cap L^2(0,T;\mathbf{L}_\Div^{2}(\Ga))\cap L^\infty(0,T;\mathbf{L}^2(\Ga))
        \end{aligned}
        \right.
    \end{align}
    for some $\omega > 0$. For $K\in[0,\infty)$ and $L\in(0,\infty]$, we show in Theorem~\ref{Theorem:HighReg} that the same result holds true if the velocity fields have \textit{Leray type regularity}, that is
    \begin{align}
        \label{INTRO:VEL:STRONG}
        (\boldsymbol{v}, \boldsymbol{w})\in L^\infty(0,T;\mathcal{L}_{\Div}^2)\cap L^2(0,T;\mathcal{H}^1).
    \end{align}
    In the two-dimensional case (i.e., $d=2$), the result remains correct for $L=0$ and $K\in[0,\infty)$, see Remark~\ref{Rem:HighReg}\ref{Rem:HighReg:a}.    
    The crucial advantage compared to \cite[Theorem~3.7]{Knopf2024a} is that \eqref{INTRO:VEL:STRONG} is a more natural assumption on the velocity fields than \eqref{INTRO:VEL:STRONG:OLD}. 
    In particular, if the velocity fields are not just prescribed but determined by additional evolution equations (e.g., Navier--Stokes equations), Leray type regularity is exactly what the classical energy balance provides.
    In this sense, the new regularity result provided by Theorem~\ref{Theorem:HighReg} is more useful in the mathematical analysis for coupled systems, e.g., for the convergence to stationary states as in \cite{AGG2024global}.
\end{enumerate}

To prove our main result \ref{MR3} and the additional regularities in \ref{MR1}, we first establish useful auxiliary results. These results are not only relevant to the present paper but may also be of interest for the further analysis of Cahn--Hilliard-type or Allen--Cahn-type systems with dynamic boundary conditions in future research.
\begin{enumerate}[label = \textbf{(\roman*)}, leftmargin=*]
    \item \label{AR1}
    In Section~\ref{SECT:SUBDIFF}, we characterize the subdifferential of the singular functional
    \begin{align*}
    \widetilde{E}_K(u,v) = 
        \intO \frac12\abs{\Grad u}^2 + F_1(u)\dx + \intG \frac12\abs{\Gradg v}^2 + G_1(v)\dG 
        + \displaystyle\sigma(K) \intG \frac12 (\alpha v - u)^2 \dG,
    \end{align*}
    where $F_1$ and $G_1$ can be chosen as the convex, singular parts of the potentials $F$ and $G$ introduced above. In particular, $\widetilde{E}_K$ coincides with $E_K$ introduced in \eqref{INTRO:ENERGY} except for the remaining regular parts $F_2 = F - F_1$ and $G_2 = G - G_1$ of the potentials $F$ and $G$. In Proposition~\ref{App:Proposition:Subdiff}, we show that the subdifferential $\partial\widetilde{E}_K$ of $\widetilde{E}_K$ is a maximal monotone operator and we provide an explicit representation formula for $\partial\widetilde{E}_K$. 

    \item \label{AR2}
    In Section~\ref{SECT:BSE}, we investigate the following bulk-surface elliptic system, which involves the singular nonlinearities $F_1$ and $G_1$:
    \begin{subequations}
    \label{INTRO:BSE}
        \begin{align}
            \label{INTRO:BSE:1}
            -\Lap u + F_1^\prime(u) &= f &\text{in $\Omega$},\\
            \label{INTRO:BSE:2}
            -\Lapg v + G_1^\prime(v) + \alpha \deln u &= g &\text{on $\Gamma$},\\
            \label{INTRO:BSE:3}
            K\deln u &= \alpha v - u 
            &\text{on $\Gamma$}.
        \end{align}        
    \end{subequations}
    The results of Proposition~\ref{App:Proposition:Subdiff} outlined in \ref{AR1} can be used to prove the existence of a unique strong solution to \eqref{INTRO:BSE}. In the remainder of Section~\ref{SECT:BSE} we further prove several regularity results with corresponding estimates for strong solutions of \eqref{INTRO:BSE}. These results are used to establish the higher regularities claimed in our main results \ref{MR1} and \ref{MR3}.
\end{enumerate}

\noindent
\section{Functional framework, preliminaries and assumptions}
\subsection{Notation and Function Spaces.} \label{SUBSECT:NOTFS}
For any Banach space $X$, we denote its norm by $\norm{\cdot}_X$, its dual space by $X^\prime$, and the associated duality pairing of elements $\Lambda \in X^\prime$ and $\zeta\in X$ by $\ang{\Lambda}{\zeta}_X$. The space $L^p(I;X)$, $1\leq p \leq +\infty$, denotes the set of all strongly measurable $p$-integrable functions mapping from any interval $I\subset\R$ into $X$, or, if $p = +\infty$, essentially bounded functions. Moreover, the space $W^{1,p}(I;X)$ consists of all functions $f\in L^p(I;X)$ such that $\delt f\in L^p(I;X)$, where $\delt f$ denotes the vector-valued distributional derivative of $f$.

Throughout this paper, we prescribe $\Omega\subset \mathbb{R}^d$ with $d=2,3$ to be a bounded domain of class $C^2$, whose boundary is denoted by $\Gamma$. 

For any real numbers $s\geq 0$ and $p\in[1,\infty]$, we introduce the spaces
\begin{align*}
    \mathcal{L}^p \coloneqq L^p(\Om)\times L^p(\Ga), \quad\text{and}\quad \mathcal{W}^{s,p} \coloneqq W^{s,p}(\Om)\times W^{s,p}(\Ga),
\end{align*}
provided that the boundary $\Ga$ is sufficiently regular. We abbreviate $\mathcal{H}^s \coloneqq \mathcal{W}^{s,2}$ and identify $\mathcal{L}^2$ with $\mathcal{H}^0$. Note that $\mathcal{H}^s$ is a Hilbert space with respect to the inner product
\begin{align*}
    \bigscp{\scp{\phi}{\psi}}{\scp{\zeta}{\xi}}_{\mathcal{H}^s} \coloneqq \scp{\phi}{\zeta}_{H^s(\Om)} + \scp{\psi}{\xi}_{H^s(\Ga)} \qquad\text{for all~} \scp{\phi}{\psi},\scp{\zeta}{\xi}\in\mathcal{H}^s
\end{align*}
and its induced norm $\norm{\cdot}_{\mathcal{H}^s} \coloneqq \scp{\cdot}{\cdot}_{\mathcal{H}^s}^{\frac12}$. We recall that the duality pairing can be expressed as
\begin{align*}
    \ang{\scp{\phi}{\psi}}{\scp{\zeta}{\xi}}_{\mathcal{H}^s} \coloneqq \scp{\phi}{\zeta}_{L^2(\Om)} + \scp{\psi}{\xi}_{L^2(\Ga)}
\end{align*}
for all $(\zeta,\xi)\in\mathcal{H}^s$ if $(\phi,\psi)\in\mathcal{L}^2$.

Moreover, for any $2\leq p <\infty$, we introduce the spaces
\begin{align*}
    \mathbf{L}^p_\Div(\Om) &\coloneqq 
    \{\boldsymbol{v}\in\mathbf{L}^p(\Om) : 
    \Div\;\boldsymbol{v} = 0 \ \text{in~} \Om, \ 
    \boldsymbol{v}\cdot\mathbf{n} = 0 \ \text{on~} \Ga\},
    \\
    \mathbf{L}^p_\tau(\Ga)&\coloneqq
    \{\boldsymbol{w}\in\mathbf{L}^p(\Ga) : 
    \boldsymbol{w}\cdot\n = 0 \ \text{on~}\Ga\},
    \\
    \mathbf{L}^p_\Div(\Ga)&\coloneqq
    \{\boldsymbol{w}\in\mathbf{L}^p_\tau(\Ga) : 
    \Divg\;\boldsymbol{w} = 0 \ \text{on~}\Ga\},
    \\
    \mathcal{L}^p_\Div &\coloneqq 
    \mathbf{L}^p_\Div(\Om) \times \mathbf{L}^p_\Div(\Ga).
\end{align*}
Note that in the definitions of $\mathbf{L}^p_\Div(\Om)$ and $\mathbf{L}^p_\Div(\Ga)$, the expressions $\Div\;\boldsymbol{v}$ in $\Om$ and $\Divg\;\boldsymbol{w}$ on $\Ga$ are to be understood in the sense of distributions. For any $\boldsymbol{v}\in \mathbf{L}^p(\Om)$ ($p\ge 2$) with $\Div\;\boldsymbol{v} = 0$ in $\Om$, it holds that $\boldsymbol{v}\cdot\mathbf{n} \in H^{-1/2}(\Ga)$, and therefore, the relation $\boldsymbol{v}\cdot\mathbf{n} = 0$ on $\Ga$ is well-defined.

For $L\in[0,\infty]$ and $\beta\in\R$, we introduce the linear subspaces
\begin{align*}
    \mathcal{H}_{L,\beta}^1 \coloneqq
    \begin{cases}
        \mathcal{H}^1, &\text{if } L \in (0,\infty] , \\
     \displaystyle   \{(\phi,\psi)\in\mathcal{H}^1 : \phi = \beta\psi \text{ a.e.~on } \Ga\}, &\text{if } L=0.
    \end{cases}
\end{align*}
The spaces $\mathcal{H}_{L,\beta}^1$ are Hilbert spaces endowed with the inner product $\scp{\cdot}{\cdot}_{\mathcal{H}_{L,\beta}^1} \coloneqq \scp{\cdot}{\cdot}_{\mathcal{H}^1}$ and its induced norm.  Moreover, we define the product
\begin{align*}
    \ang{\scp{\phi}{\psi}}{\scp{\zeta}{\xi}}_{\mathcal{H}_{L,\beta}^1} \coloneqq \scp{\phi}{\zeta}_{L^2(\Om)} + \scp{\psi}{\xi}_{L^2(\Ga)}
\end{align*}
for all $\scp{\phi}{\psi}, \scp{\zeta}{\xi}\in\mathcal{L}^2$. By means of the Riesz representation theorem, this product can be extended to a duality pairing on $(\mathcal{H}_{L,\beta}^1)^\prime\times\mathcal{H}_{L,\beta}^1$, which will also be denoted as $\ang{\cdot}{\cdot}_{\mathcal{H}_{L,\beta}^1}$.

For $(\phi,\psi)\in(\mathcal{H}^1_{L,\beta})^\prime$, we define the generalized bulk-surface mean
\begin{align*}
    \mean{\phi}{\psi} \coloneqq \frac{\ang{\scp{\phi}{\psi}}{\scp{\beta}{1}}_{\mathcal{H}^1_{L,\beta}}}{\beta^2\abs{\Om} + \abs{\Ga}},
\end{align*}
which reduces to
\begin{align*}
    \mean{\phi}{\psi} = \frac{\beta\abs{\Om}\meano{\phi} + \abs{\Ga}\meang{\psi}}{\beta^2\abs{\Om} + \abs{\Ga}}
\end{align*}
if $\phi\in L^2(\Om)$ and $\psi\in L^2(\Ga)$, where
\begin{align*}
\meano{\phi}=\frac{1}{\abs{\Omega}} \int_{\Omega} \phi \, \dx,
\quad
\meang{\psi}=\frac{1}{\abs{\Gamma}} \int_{\Gamma} \psi \, \dG.
\end{align*}
We then define the closed linear subspaces
\begin{align*}
    \mathcal{V}_{L,\beta}^1 &\coloneqq \begin{cases} 
    \{\scp{\phi}{\psi}\in\mathcal{H}^1_{L,\beta} : \mean{\phi}{\psi} = 0 \}, &\text{if~} L\in[0,\infty), \\
    \{\scp{\phi}{\psi}\in\mathcal{H}^1: \meano{\phi} = \meang{\psi} = 0 \}, &\text{if~}L=\infty.
    \end{cases} 
\end{align*}
Note that these subspaces are Hilbert spaces with respect to the inner product $\scp{\cdot}{\cdot}_{\mathcal{H}^1}$.

Next, we set
\begin{align*}
    \sigma(L) \coloneqq
    \begin{cases}
        L^{-1}, &\text{if } L\in(0,\infty), \\
            0, &\text{if } L\in\{0,\infty\},
    \end{cases}
\end{align*}
and we introduce a bilinear form on $\mathcal{H}^1\times\mathcal{H}^1$ by defining
\begin{align*}
    \bigscp{\scp{\phi}{\psi}}{\scp{\zeta}{\xi}}_{L,\beta} \coloneqq &\intO\Grad\phi\cdot\Grad\zeta \dx + \intG\Gradg\psi\cdot\Gradg\xi \dG + \sigma(L)\intG (\beta\psi-\phi)(\beta\xi-\zeta)\dG
\end{align*}
for all $ \scp{\phi}{\psi}, \scp{\zeta}{\xi}\in\mathcal{H}^1$. Moreover, we set 
\begin{align*}
    \norm{\scp{\phi}{\psi}}_{L,\beta} \coloneqq \bigscp{\scp{\phi}{\psi}}{\scp{\phi}{\psi}}_{L,\beta}^{\frac12}
\end{align*}
for all $\scp{\phi}{\psi}\in\mathcal{H}^1$. The bilinear form $\scp{\cdot}{\cdot}_{L,\beta}$ defines an inner product on $\mathcal{V}^1_{L,\beta}$, and $\norm{\cdot}_{L,\beta}$ defines a norm on $\mathcal{V}^1_{L,\beta}$, that is equivalent to the norm $\norm{\cdot}_{\mathcal{H}^1}$ (see \cite[Corollary A.2]{Knopf2021}). Hence, the space $\mathcal{V}^1_{L,\beta}$ endowed with $\scp{\cdot}{\cdot}_{L,\beta}$ is a Hilbert space.

Next, we define the spaces
\begin{align*}
    \mathcal{V}_{L,\beta}^{-1} \coloneqq \begin{cases} 
    \{\scp{\phi}{\psi}\in(\mathcal{H}^1_{L,\beta})^\prime : \beta\abs{\Om}\meano{\phi} + \abs{\Ga}\meang{\psi} = 0 \}, &\text{if~} L\in[0,\infty), \\
    \{\scp{\phi}{\psi}\in(\mathcal{H}^1)^\prime: \meano{\phi} = \meang{\psi} = 0 \}, &\text{if~}L=\infty.
    \end{cases}
\end{align*}
Using the Lax--Milgram theorem, one can show that for any $(\phi,\psi)\in\mathcal{V}^{-1}_{L,\beta}$, there exists a unique weak solution $\mathcal{S}_{L,\beta}(\phi,\psi) = \big(\mathcal{S}_{L,\beta}^\Om(\phi,\psi),\mathcal{S}_{L,\beta}^\Ga(\phi,\psi)\big)\in\mathcal{V}^1_{L,\beta}$ to the following elliptic problem with bulk-surface coupling
\begin{alignat*}{2}
    -\Lap\mathcal{S}_{L,\beta}^\Om(\phi,\psi) &= -\phi &&\qquad\text{in~}\Om, \\
    -\Lapg\mathcal{S}_{L,\beta}^\Ga(\phi,\psi) + \beta\deln\mathcal{S}_{L,\beta}^\Om(\phi,\psi) &= -\psi &&\qquad\text{on~}\Ga, \\
    L\deln\mathcal{S}_{L,\beta}^\Om(\phi,\psi) &= \beta\mathcal{S}_{L,\beta}^\Ga(\phi,\psi) - \mathcal{S}_{L,\beta}^\Om(f,g) &&\qquad\text{on~}\Ga,
\end{alignat*}
in the sense that is satisfies the weak formulation
\begin{align*}
    \big(\mathcal{S}_{L,\beta}(\phi,\psi)),(\zeta,\xi)\big)_{L,\beta} = -\bigang{(\phi,\psi)}{(\zeta,\xi)}_{\mathcal{H}^1_{L,\beta}}
\end{align*}
for all test functions $(\zeta,\xi)\in\mathcal{H}^1_{L,\beta}$. Consequently, there exists a constant $C \geq 0$, depending only on $\Om, L$ and $\beta$ such that
\begin{align*}
    \norm{\mathcal{S}_{L,\beta}(\phi,\psi)}_{L,\beta}\leq C\norm{(\phi,\psi)}_{(\mathcal{H}^1_{L,\beta})^\prime}
\end{align*}
for all $(\phi,\psi)\in\mathcal{V}^{-1}_{L,\beta}$. We can thus define a solution operator
\begin{align*}
    \mathcal{S}_{L,\beta}:\mathcal{V}^{-1}_{L,\beta}\rightarrow\mathcal{V}^1_{L,\beta}, \quad (\phi,\psi)\mapsto \mathcal{S}_{L,\beta}(\phi,\psi) = \big(\mathcal{S}_{L,\beta}^\Om(\phi,\psi),\mathcal{S}_{L,\beta}^\Ga(\phi,\psi)\big)
\end{align*}
as well as an inner product and its induced norm on $\mathcal{V}^{-1}_{L,\beta}$ by
\begin{align*}
    \big((\phi,\psi),(\zeta,\xi)\big)_{L,\beta,\ast} &\coloneqq \big(\mathcal{S}_{L,\beta}(\phi,\psi),\mathcal{S}_{L,\beta}(\zeta,\xi)\big)_{L,\beta}, \\
    \norm{(\phi,\psi)}_{L,\beta,\ast} &\coloneqq \big((\phi,\psi),(\phi,\psi)\big)_{L,\beta,\ast}^{1/2},
\end{align*}
for $(\phi,\psi), (\zeta,\xi)\in\mathcal{V}^{-1}_{L,\beta}$. This is a norm on $\mathcal{V}^{-1}_{L,\beta}$, which is equivalent to $\norm{\cdot}_{(\mathcal{H}^1_{L,\beta})^\prime}$, see, e.g., \cite[Theorem~3.3 and Corollary~3.5]{Knopf2021} for a proof in the cases $L\in(0,\infty)$. In other cases, the proof can be carried out analogously.
\subsection{Important tools}

Throughout this paper, we will frequently use the bulk-surface Poincar\'{e} inequality, which has been established in \cite[Lemma A.1]{Knopf2021}:
\begin{lemma}
    \label{Prelim:Poincare}
    Let $K\in[0,\infty)$ and $\alpha,\beta\in\R$ such that $\alpha\beta\abs{\Om} + \abs{\Ga}\neq 0$. Then, there exists a constant $C_P > 0$, depending only on $K,\alpha,\beta$ and $\Om$ such that
    \begin{align*}
        \norm{(\zeta,\xi)}_{\mathcal{L}^2} \leq C_P \norm{(\zeta,\xi)}_{K,\alpha}
    \end{align*}
    for all pairs $(\zeta,\xi)\in\mathcal{H}^1_{K,\alpha}$ satisfying $\mean{\zeta}{\xi} = 0$.
\end{lemma}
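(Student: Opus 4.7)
My plan is to prove this bulk--surface Poincar\'e inequality by contradiction via a compactness argument, which is the standard strategy for Poincar\'e--Wirtinger type estimates. Assume the claim fails. Then, for every $n\in\mathbb{N}$, there exists $(\zeta_n,\xi_n)\in\mathcal{H}^1_{K,\alpha}$ with $\mean{\zeta_n}{\xi_n}=0$ such that $\norm{(\zeta_n,\xi_n)}_{\mathcal{L}^2}>n\,\norm{(\zeta_n,\xi_n)}_{K,\alpha}$. Dividing by $\norm{(\zeta_n,\xi_n)}_{\mathcal{L}^2}$ and relabeling, I may assume $\norm{(\zeta_n,\xi_n)}_{\mathcal{L}^2}=1$ and $\norm{(\zeta_n,\xi_n)}_{K,\alpha}\to 0$. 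Since the bulk and surface gradient contributions to $\norm{\cdot}_{K,\alpha}$ are controlled, the sequence is bounded in $\mathcal{H}^1$. By the compactness of the Rellich--Kondrachov embedding $\mathcal{H}^1\hookrightarrow\mathcal{L}^2$ (valid because $\Omega$ is of class $C^2$, so $H^1(\Omega)\hookrightarrow L^2(\Omega)$ and $H^1(\Gamma)\hookrightarrow L^2(\Gamma)$ compactly), a subsequence converges to some limit $(\zeta,\xi)$ strongly in $\mathcal{L}^2$ and weakly in $\mathcal{H}^1$.

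The next step is to identify the limit. By weak lower semicontinuity, $\Grad\zeta\equiv 0$ in $\Omega$ and $\Gradg\xi\equiv 0$ on $\Gamma$, so $\zeta\equiv c_\zeta$ and $\xi\equiv c_\xi$ are constants (assuming $\Gamma$ is connected; a disconnected $\Gamma$ can be handled component by component, combined with the coupling derived below). The relation $c_\zeta=\alpha c_\xi$ would be derived case-by-case: if $K\in(0,\infty)$, then $\sigma(K)\intG(\alpha\xi_n-\zeta_n)^2\dG\to 0$ and the continuity of the trace $H^1(\Omega)\to L^2(\Gamma)$ force $\alpha c_\xi-c_\zeta=0$; if $K=0$, the defining constraint $\zeta_n|_\Gamma=\alpha\xi_n$ a.e.\ on $\Gamma$ is preserved under the weak convergence of traces in $H^{1/2}(\Gamma)$, again yielding $c_\zeta=\alpha c_\xi$.

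Finally, strong $\mathcal{L}^2$-convergence lets me pass to the limit in the constraint $\mean{\zeta_n}{\xi_n}=0$, obtaining
\[
0=\mean{\zeta}{\xi}=\frac{\beta\abs{\Om}\,\alpha c_\xi+\abs{\Ga}\,c_\xi}{\beta^2\abs{\Om}+\abs{\Ga}}=\frac{(\alpha\beta\abs{\Om}+\abs{\Ga})\,c_\xi}{\beta^2\abs{\Om}+\abs{\Ga}}.
\]
The compatibility hypothesis $\alpha\beta\abs{\Om}+\abs{\Ga}\neq 0$ then forces $c_\xi=0$, hence $(\zeta,\xi)=(0,0)$, which contradicts $\norm{(\zeta,\xi)}_{\mathcal{L}^2}=\lim_n\norm{(\zeta_n,\xi_n)}_{\mathcal{L}^2}=1$. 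The constant $C_P$ produced by the argument inherits dependence only on $K,\alpha,\beta$ and $\Omega$, as claimed.

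The only real subtlety I anticipate is the uniform handling of the two regimes $K\in(0,\infty)$ and $K=0$, in which the coupling $\zeta=\alpha\xi$ on $\Gamma$ is encoded either as a vanishing penalty term in the norm or as a hard constraint in the function space. Both pathways ultimately hinge on the continuity of the trace operator $H^1(\Omega)\to H^{1/2}(\Gamma)$ and on the compactness of the embeddings $H^1(\Omega)\hookrightarrow L^2(\Omega)$ and $H^1(\Gamma)\hookrightarrow L^2(\Gamma)$. The algebraic identity in the final display makes clear why precisely the condition $\alpha\beta\abs{\Om}+\abs{\Ga}\neq 0$ is required: it is exactly the non-degeneracy needed to exclude nontrivial constant pairs from the kernel of the mean.
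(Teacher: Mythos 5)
The paper does not actually prove this lemma --- it is quoted from \cite[Lemma~A.1]{Knopf2021} --- and your compactness/contradiction argument is precisely the standard proof used for such bulk--surface Poincar\'e inequalities (and the one in that reference), so your proposal is correct and matches the intended argument: normalization, weak $\mathcal{H}^1$/strong $\mathcal{L}^2$ limits, identification of the limit as a constant pair coupled by $\zeta=\alpha\xi$ on $\Ga$, and exclusion of nontrivial constants via $\alpha\beta\abs{\Om}+\abs{\Ga}\neq 0$. One small caveat: your parenthetical claim that a disconnected $\Ga$ can be handled ``component by component, combined with the coupling'' is only valid when $\alpha\neq 0$ (then $\alpha c_{\xi,i}=c_\zeta$ forces a single constant); if $\alpha=0$ and $\Ga$ has several components, taking $\zeta\equiv 0$ and $\xi$ locally constant with $\meang{\xi}=0$ gives $\norm{(\zeta,\xi)}_{K,\alpha}=0$ with $\norm{(\zeta,\xi)}_{\mathcal{L}^2}>0$, so the inequality itself fails --- connectedness of $\Ga$ (as in the cited lemma) is the implicit hypothesis, not something your argument needs to repair.
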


Furthermore, we recall the following trace interpolation inequality (see, e.g., \cite{Necas2012}):
\begin{lemma}\label{Prelim:Lemma:Interpol}
    Let $\Om$ be a Lipschitz domain in $\R^d$, $d=2,3$, with compact boundary. Then, there exists a positive constant $C$ such that
    \begin{align*}
        \norm{u}_{L^2(\partial\Om)} \leq C\norm{u}_{L^2(\Om)}^{\frac12}
        \norm{u}_{H^1(\Om)}^{\frac12}, \quad\text{for all~}u\in H^1(\Om).
    \end{align*}
\end{lemma}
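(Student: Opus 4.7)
The plan is to reduce the inequality to smooth functions by density and then obtain it from a divergence-theorem identity involving a carefully chosen vector field. Since $\Om$ is a bounded Lipschitz domain, $C^\infty(\overline{\Om})$ is dense in $H^1(\Om)$, and the trace operator maps $H^1(\Om)$ continuously into $L^2(\Ga)$; hence, once the estimate is established for smooth $u$, it extends to all $u\in H^1(\Om)$ by continuity.

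First I would construct an auxiliary vector field $\boldsymbol{\xi}\in W^{1,\infty}(\Om;\R^d)$ satisfying $\boldsymbol{\xi}\cdot\n\geq c_0>0$ almost everywhere on $\Ga$. For a bounded Lipschitz domain this is standard: cover $\Ga$ by finitely many coordinate charts in which, after rotation, the boundary is the graph of a Lipschitz function, choose in each chart the constant field $-\mathbf{e}_d$ (which is transversal to the boundary because the outward normal has a uniformly negative $d$-th component in these coordinates), glue the local pieces together using a smooth partition of unity, and extend the resulting field smoothly into the interior. Compactness of $\Ga$ then yields the uniform positive lower bound $c_0$.

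Next, for $u\in C^\infty(\overline{\Om})$ I would apply the divergence theorem to $u^2\boldsymbol{\xi}$, which gives
\[
\intG u^2\,(\boldsymbol{\xi}\cdot\n)\dG = 2\intO u\,\Grad u\cdot\boldsymbol{\xi}\dx + \intO u^2\,\Div(\boldsymbol{\xi})\dx.
\]
Bounding the left-hand side from below by $c_0\norm{u}_{L^2(\Ga)}^2$, applying the Cauchy--Schwarz inequality on the right, and using the $L^\infty$-bounds on $\boldsymbol{\xi}$ and $\Div(\boldsymbol{\xi})$ leads to
\[
c_0\,\norm{u}_{L^2(\Ga)}^2 \leq 2\norm{\boldsymbol{\xi}}_{L^\infty}\norm{u}_{L^2(\Om)}\norm{\Grad u}_{L^2(\Om)} + \norm{\Div\boldsymbol{\xi}}_{L^\infty}\norm{u}_{L^2(\Om)}^2.
\]
Since $\norm{u}_{L^2(\Om)}\leq\norm{u}_{H^1(\Om)}$ and $\norm{\Grad u}_{L^2(\Om)}\leq\norm{u}_{H^1(\Om)}$, both terms on the right are controlled by a constant multiple of $\norm{u}_{L^2(\Om)}\norm{u}_{H^1(\Om)}$; taking square roots produces the claimed estimate for smooth $u$, and the density step above concludes the argument.

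The main technical point is the construction of $\boldsymbol{\xi}$, which uses the Lipschitz regularity of $\Ga$ in an essential way but is otherwise a routine patching argument; in the $C^1$-setting one could simply take $\boldsymbol{\xi}$ to be an $H^1$-extension of $\n$. An alternative route that avoids the explicit construction entirely is to localize via a partition of unity and pass to the half-space $\{x_d>0\}$ through a Lipschitz change of variables: there, for Schwartz $u$ one has the pointwise bound $\abs{u(x',0)}^2\leq 2\int_0^\infty \abs{u(x',x_d)}\,\abs{\partial_{x_d}u(x',x_d)}\,\mathrm{d}x_d$, and integrating in $x'$ combined with the Cauchy--Schwarz inequality directly yields the desired trace estimate.
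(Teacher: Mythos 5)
Your argument is correct. Note that the paper does not prove this lemma at all — it is quoted with a citation to Ne\v{c}as's book — so there is no in-paper proof to compare against; what you give is a complete, self-contained proof by the classical route: a Lipschitz vector field $\boldsymbol{\xi}$ uniformly transversal to $\Ga$ (built by chart-wise constant fields glued with a partition of unity), the Gauss--Green identity applied to $u^2\boldsymbol{\xi}$, Cauchy--Schwarz, and density of $C^\infty(\overline{\Om})$ in $H^1(\Om)$ together with continuity of the trace. Both this transversal-field trick and your alternative via localization and flattening to a half-space are exactly the standard arguments found in the cited literature, and either one establishes the stated inequality with a constant depending only on the Lipschitz character of $\Ga$. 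The only point worth a small qualification is your parenthetical remark that in the $C^1$ setting one could take $\boldsymbol{\xi}$ to be an $H^1$-extension of $\n$: with merely $\Div\boldsymbol{\xi}\in L^2$ the zeroth-order term is no longer bounded by $\norm{u}_{L^2(\Om)}\norm{u}_{H^1(\Om)}$ (one only gets $\norm{u}_{L^4(\Om)}^2$ times an $L^2$ norm), so one should instead take a bounded smooth extension of (an approximation of) $\n$; this aside does not affect the main argument, which stands as written.
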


\medskip

\subsection{Main Assumptions.}
\label{SUBSEC:ASS}
The constants $\alpha,\beta\in\R$ appearing in System~\eqref{EQ:SYSTEM} are supposed to satisfy $\alpha\beta\abs{\Om} + \abs{\Ga} \neq 0$. We further assume that $-1 \leq \alpha \leq 1$.
We require that the mobility functions $m_\Om,m_\Ga\in C(\R)$ satisfy
\begin{align}
    \label{Assumption:Mobility}
    0 < m_\Om^\ast \leq m_\Om(s) \leq M_\Om^\ast \quad\text{and~}\quad 0 < m_\Ga^\ast \leq m_\Ga(s) \leq M_\Ga^\ast \quad\text{for all~}s\in\R
\end{align}
for some positive constants $m_\Om^\ast, M_\Om^\ast, m_\Ga^\ast, M_\Ga^\ast > 0$. The singular potentials $F$ and $G$ have the form
\begin{align*}
    F(s) = F_1(s) + F_2(s) \quad\text{and~}\quad G(s) = G_1(s) + G_2(s), \quad\text{for all~} s\in[-1,1],
\end{align*}
where $F_1,G_1\in C([-1,1])\cap C^2(-1,1)$ with
\begin{align}\label{Assumption:Pot:Sing}
    \lim_{s\searrow -1} F_1^\prime(s) = \lim_{s\searrow -1} G_1^\prime(s) = -\infty \quad\text{and~}\quad \lim_{s\nearrow 1} F_1^\prime(s) = \lim_{s\nearrow 1} G_1^\prime(s) = +\infty.
\end{align}
There further exist constants $\Theta_\Om, \Theta_\Ga > 0$ such that
\begin{align}\label{Assumption:Pot:Convexity}
    F_1^{\prime\prime}(s) \geq \Theta_\Om \quad\text{and~}\quad G_1^{\prime\prime}(s) \geq \Theta_\Ga \qquad\text{for all~}s\in(-1,1).
\end{align}
We extend $F_1$ and $G_1$ by defining $F_1(s) = G_1(s) = +\infty$ for $s\not\in[-1,1]$ and, without loss of generality, we assume  that $F_1(0) = G_1(0) = 0$ and $F_1^\prime(0) = G_1^\prime(0) = 0$. 
Lastly, we assume that the singular part of the boundary potential dominates the singular part of the bulk potential in the following sense: there exist $\kappa_1,\kappa_2 > 0$ such that
\begin{align}
    \label{Domination:Prime}
    \abs{F_1^\prime(\alpha s)} \leq \kappa_1\abs{G_1^\prime(s)} + \kappa_2 \qquad\text{for all~}s\in(-1,1),
\end{align}
where $\alpha$ is given in \eqref{EQ:SYSTEM}.
Finally, $F_2, G_2\in C^1(\R)$, and their derivatives are globally Lipschitz continuous.

\section{Main results.} 

\subsection{Weak solutions}

We start by introducing the notion of weak solutions.

\begin{definition}\label{DEF:SING:WS}
    Let $K,L\in[0,\infty]$, $(\boldsymbol{v},\boldsymbol{w})\in L^2(0,T;\mathbf{L}^2_\Div(\Om)\times\mathbf{L}^2_\tau(\Ga))$ be given velocity fields, and let $\scp{\phi_0}{\psi_0}\in\mathcal{H}^1_{K,\alpha}$ be an initial datum satisfying
    \begin{subequations}\label{cond:init}
    \begin{align}\label{cond:init:int}
        \norm{\phi_0}_{L^\infty(\Om)} \leq 1, \qquad \norm{\psi_0}_{L^\infty(\Ga)} \leq 1.
    \end{align}
    In addition, we assume that
    \begin{align}\label{cond:init:mean:L}
       &\beta\,\mean{\phi_0}{\psi_0}\in (-1,1), \quad \mean{\phi_0}{\psi_0}\in(-1,1), \quad \text{if~ }L\in[0,\infty),
\end{align}
and 
\begin{align}
 \label{cond:init:mean:inf}
      &  \meano{\phi_0}\in(-1,1), \quad \meang{\psi_0}\in(-1,1), \quad \text{if~ }L=\infty.
    \end{align}
    \end{subequations}
    The set $(\phi,\psi,\mu,\theta)$ is called a weak solution of System~\eqref{EQ:SYSTEM} on $[0,T]$ if the following properties hold:
    \begin{enumerate}[label=\textnormal{(\roman*)}, ref=\thetheorem(\roman*), topsep=0ex, leftmargin=*, itemsep=1.5ex]
        \item The functions $\phi, \psi, \mu$ and $\theta$ satisfy
        \begin{subequations}
        \label{REG:SING}
            \begin{align}
                &\scp{\phi}{\psi} \in C([0,T];\mathcal{L}^2)\cap H^1(0,T;(\mathcal{H}_{L,\beta}^1)^\prime)\cap 
                L^\infty(0,T;\mathcal{H}_{K,\alpha}^1), \label{REGPP:SING}\\
                &\scp{\mu}{\theta}\in L^2(0,T;\mathcal{H}_{L,\beta}^1) \label{REGMT:SING}, \\
                &\scp{F^\prime(\phi)}{G^\prime(\psi)}\in L^2(0,T;\mathcal{L}^2) \label{REGLC:SING},
            \end{align}
        \end{subequations}
        and it holds 
        \begin{equation}
            \label{PROP:CONF}
            \abs{\phi} < 1 \quad\text{a.e.~in $Q$}
            \quad\text{and}\quad
            \abs{\psi} < 1 \quad\text{a.e.~on $\Sigma$}.
        \end{equation}
    
    \item \label{DEF:SING:WS:IC} The initial conditions are satisfied in the following sense:
    \begin{align*}
        \phi\vert_{t=0} = \phi_0 \quad\text{a.e.~in } \Omega, \quad\text{and} \quad\psi\vert_{t=0} = \psi_0 \quad\text{a.e.~on }\Gamma.
    \end{align*}
    \item \label{DEF:SING:WS:WF} The variational formulation
    \begin{subequations}\label{SING:WF}
        \begin{align}
        &\begin{aligned}
            &\ang{\scp{\delt\phi}{\delt\psi}}{\scp{\zeta}{\xi}}_{\mathcal{H}_{L,\beta}^1} - \intO \phi\boldsymbol{v}\cdot\Grad\zeta\dx - \intG \psi\boldsymbol{w}\cdot\Gradg\xi\dG 
            \\
            &= - \intO m_\Om(\phi)\Grad\mu\cdot\Grad\zeta\dx - \intG  m_\Ga(\psi)\Gradg\theta\cdot\Gradg\xi\dG \label{WF:PP:SING}\\
            &\quad - \sigma(L)\intG(\beta\theta-\mu)(\beta\xi - \zeta)\dG, 
        \end{aligned}
            \\
        &\begin{aligned}
            &\intO \mu\,\eta\dx + \intG\theta\,\vartheta\dG
            \\
            &=  \intO\Grad\phi\cdot\Grad\eta + F^\prime(\phi)\eta\dx
            + \intG \Gradg\psi\cdot\Gradg\vartheta + G^\prime(\psi)\vartheta\dG\label{WF:MT:SING}
            \\
            &\quad + \sigma(K)\intG(\alpha\psi-\phi)(\alpha\vartheta - \eta) \dG, 
        \end{aligned}
        \end{align}
    \end{subequations}
    holds a.e.~on $[0,T]$ for all $\scp{\zeta}{\xi}\in\mathcal{H}_{L,\beta}^1, \scp{\eta}{\vartheta}\in\mathcal{H}_{K,\alpha}^1$.
    \item \label{DEF:SING:WS:MCL} The functions $\phi$ and $\psi$ satisfy the mass conservation law
    \begin{align}\label{MCL:SING}
        \begin{dcases}
            \beta\intO \phi(t)\dx + \intG \psi(t)\dG = \beta\intO \phi_0 \dx + \intG \psi_0\dG, &\textnormal{if } L\in[0,\infty), \\
            \intO\phi(t)\dx = \intO\phi_0\dx \quad\textnormal{and}\quad \intG\psi(t)\dG = \intG\psi_0\dG, &\textnormal{if } L = \infty
        \end{dcases}
    \end{align}
    for all $t\in[0,T]$.
    \item \label{DEF:SING:WS:WEDL} The energy inequality
    \begin{align}\label{WEDL:SING}
        \begin{split}
            &E_K(\phi(t),\psi(t)) + \int_0^t\intO m_\Om(\phi)\abs{\Grad\mu}^2\dxs + \int_0^t\intG m_\Ga(\psi)\abs{\Gradg\theta}^2\dGs \\
            &\quad + \sigma(L) \int_0^t\intG (\beta\theta-\mu)^2\dGs \\
            &
            \leq E_K(\phi_0,\psi_0) + \int_0^t\intO\phi\boldsymbol{v}\cdot\Grad\mu\dxs +\int_0^t\intG \psi\boldsymbol{w}\cdot\Gradg\theta\dGs 
        \end{split}
    \end{align}
    holds for all $t\in[0,T]$.
    \end{enumerate}
\end{definition}

\medskip

For prescribed velocity fields such that
\begin{equation}
    \label{ASS:VEL:WEAK:OLD}
    \boldsymbol{v}\in L^2(0,T;\mathbf{L}^{3}_\Div(\Om))
    \quad\text{and}\quad
    \boldsymbol{w}\in L^2(0,T;\mathbf{L}^{2+\omega}_\tau(\Ga))
    \quad\text{for some $\omega > 0$}
\end{equation}
the existence of a global-in-time weak solution has been proven in \cite[Theorem 3.4]{Knopf2024a}. In our first contribution, we show that this result holds true under the weaker assumptions
\begin{equation}
    \label{ASS:VEL:WEAK}
    \boldsymbol{v}\in L^2(0,T;\mathbf{L}^{2}_\Div(\Om))
    \quad\text{and}\quad
    \boldsymbol{w}\in L^2(0,T;\mathbf{L}^{2}_\tau(\Ga)).
\end{equation}

\medskip

\begin{theorem}\label{THEOREM:EOWS}
    Let $K,L\in[0,\infty]$, let $\scp{\phi_0}{\psi_0}\in\mathcal{H}^1_{K,\alpha}$ be an arbitrary initial datum satisfying \eqref{cond:init}, and let $(\boldsymbol{v},\boldsymbol{w})\in L^2(0,T;L^2(0,T;\mathbf{L}^2_\Div(\Om)\times\mathbf{L}^2_\tau(\Ga)))$ be given velocity fields. 
    Then, System~\eqref{EQ:SYSTEM} admits at least one weak solution $(\phi,\psi,\mu,\theta)$ in the sense of Definition~\ref{DEF:SING:WS},
    which has the additional regularities
    \begin{align*}
        \scp{\phi}{\psi}\in L^2(0,T;\mathcal{W}^{2,6}) \quad\text{and~}\quad (F^\prime(\phi), G^\prime(\psi))\in L^2(0,T;\mathcal{L}^6),
    \end{align*}
    and satisfies the equations
    \begin{align}
        &\mu = -\Lap\phi + F^\prime(\phi) &&\text{a.e.~in } Q, \label{Eq:mu:strong}\\
        &\theta = -\Lapg\psi + G^\prime(\psi) + \alpha\deln\phi &&\text{a.e.~on } \Sigma, \label{Eq:theta:strong}\\
        & \begin{cases} 
            K\deln\phi = \alpha\psi - \phi &\text{if} \ K\in [0,\infty), \\
            \deln\phi = 0 &\text{if} \ K = \infty
        \end{cases} &&  \text{a.e.~on } \Sigma .\label{Eq:bd:strong}
    \end{align}
    Furthermore, if $L \in(0,\infty]$, any weak solutions satisfies the energy equality, i.e., \eqref{WEDL:SING} holds with an equality.
\end{theorem}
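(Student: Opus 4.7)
The plan is a two-stage approximation: first replace the merely $\mathbf{L}^2$-velocity fields by smoother ones to which \cite[Theorem~3.4]{Knopf2024a} applies, then derive a priori estimates that depend only on the $\mathbf{L}^2$-norms of the velocities, and finally pass to the limit. The additional $\mathcal{W}^{2,6}$-regularity will come a posteriori from the bulk-surface elliptic theory of Section~\ref{SECT:BSE}. Concretely, using density of divergence-free smooth vector fields in $\mathbf{L}^2_\Div(\Om)$ and $\mathbf{L}^2_\tau(\Ga)$, I would choose $\boldsymbol{v}_n\in L^2(0,T;\mathbf{L}^{3}_\Div(\Om))$ and $\boldsymbol{w}_n\in L^2(0,T;\mathbf{L}^{2+\omega}_\tau(\Ga))$ converging strongly to $\boldsymbol{v}$ and $\boldsymbol{w}$ in $L^2(0,T;\mathbf{L}^2)$, and let $(\phi_n,\psi_n,\mu_n,\theta_n)$ denote the corresponding weak solutions provided by the previous theorem.

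The decisive observation for the uniform estimates is that the singular potential forces $|\phi_n|,|\psi_n|\le 1$, so that the convective contribution in the energy inequality \eqref{WEDL:SING} can be bounded as
\begin{equation*}
\Bigabs{\int_0^t\!\intO \phi_n\boldsymbol{v}_n\cdot\Grad\mu_n\dxs}
\le \|\boldsymbol{v}_n\|_{L^2(0,t;\mathbf{L}^2)}\|\Grad\mu_n\|_{L^2(0,t;L^2)},
\end{equation*}
and analogously on $\Sigma$. Absorbing the gradient factors into the dissipative terms by Young's inequality and the mobility bound \eqref{Assumption:Mobility}, I would obtain uniform bounds on $(\phi_n,\psi_n)$ in $L^\infty(0,T;\mathcal{H}^1_{K,\alpha})\cap L^\infty(Q\cup\Sigma)$, on $(\mu_n,\theta_n)$ in $L^2(0,T;\mathcal{H}^1_{L,\beta})$, and, via the standard convexity argument based on \eqref{Assumption:Pot:Convexity}, on $(F'(\phi_n),G'(\psi_n))$ in $L^2(0,T;\mathcal{L}^2)$. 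Plugging an arbitrary test pair from $\mathcal{H}^1_{L,\beta}$ into \eqref{WF:PP:SING} and using again $\|\phi_n\|_\infty\le 1$, the time derivative $(\delt\phi_n,\delt\psi_n)$ is uniformly bounded in $L^2(0,T;(\mathcal{H}^1_{L,\beta})')$.

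By Banach--Alaoglu and Aubin--Lions I would then extract a (non-relabelled) subsequence for which $(\phi_n,\psi_n)$ converges strongly in $C([0,T];\mathcal{L}^2)$ and pointwise a.e., all remaining quantities converging weakly in their natural spaces. The uniform bound by $1$ upgrades the a.e.\ convergence to strong convergence in any $L^p(Q)\times L^p(\Sigma)$ with $p<\infty$ via dominated convergence; combined with $\boldsymbol{v}_n\to\boldsymbol{v}$ and $\boldsymbol{w}_n\to\boldsymbol{w}$ strongly in $L^2(0,T;\mathbf{L}^2)$, the convective products pass to the limit in \eqref{WF:PP:SING} without difficulty. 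The diffusive terms are handled by continuity of $m_\Om,m_\Ga$ together with the weak convergence of the gradients, and the coupling terms by the strong convergence of traces. The identification of $F'(\phi)$ and $G'(\psi)$ is obtained from pointwise convergence and the uniform $L^2$-estimate, which simultaneously rules out concentration at $\pm 1$ and thus yields \eqref{PROP:CONF}. Lower semicontinuity then transfers \eqref{WEDL:SING} to the limit.

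For the additional regularity, once the limit solution is known to satisfy \eqref{Eq:mu:strong}--\eqref{Eq:bd:strong} a.e., I would view these relations at a.e.\ fixed $t$ as an instance of the bulk-surface elliptic system \eqref{INTRO:BSE} with data $(\mu-F_2'(\phi),\theta-G_2'(\psi))\in\mathcal{L}^6$ (Sobolev embedding $\mathcal{H}^1\hookrightarrow\mathcal{L}^6$ plus Lipschitzness of $F_2',G_2'$), and invoke the regularity results of Section~\ref{SECT:BSE} to conclude $(\phi,\psi)\in L^2(0,T;\mathcal{W}^{2,6})$ and $(F'(\phi),G'(\psi))\in L^2(0,T;\mathcal{L}^6)$. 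For $L\in(0,\infty]$, the pair $(\mu,\theta)$ lies in $L^2(0,T;\mathcal{H}^1_{L,\beta})$ and is therefore admissible as a test in \eqref{WF:PP:SING}; combined with the chain rule applied to $E_K(\phi,\psi)$ using \eqref{WF:MT:SING}, this upgrades \eqref{WEDL:SING} to equality. The hardest part I expect is controlling the singular nonlinearities in the limit, since only an $L^2$-bound is available and one must ensure $\phi,\psi$ do not accumulate on $\{\pm 1\}$; the convexity lower bound \eqref{Assumption:Pot:Convexity} provides exactly the needed margin. A secondary technical point is the case $L=0$, where the admissible test pairs satisfy $\zeta=\beta\xi$ on $\Gamma$, so the approximating scheme and the test-function choices must respect this trace constraint throughout.
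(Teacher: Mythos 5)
Your strategy is essentially the paper's: approximate $(\boldsymbol{v},\boldsymbol{w})$ by fields satisfying the stronger assumption \eqref{ASS:VEL:WEAK:OLD}, apply the known existence result of \cite[Theorem~3.4]{Knopf2024a} to each approximation, use that the approximate solutions already satisfy $\abs{\phi_k}<1$, $\abs{\psi_k}<1$ so that the convective terms in the energy inequality are controlled by the $L^2(0,T;\mathcal{L}^2)$-norms of the velocities alone, extract limits by Banach--Alaoglu and Aubin--Lions, and obtain the $\mathcal{W}^{2,6}$/$\mathcal{L}^6$ regularity a posteriori from the elliptic theory of Section~\ref{SECT:BSE}. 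Your identification of the singular terms (a.e.\ convergence plus the uniform $\mathcal{L}^2$-bound and Fatou, which also gives \eqref{PROP:CONF}) is a legitimate variant of the paper's maximal-monotonicity argument, and both routes are standard.

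The one step you gloss over is the energy equality. Testing \eqref{WF:PP:SING} with $(\mu,\theta)$ is fine, but rewriting the resulting terms as $\ddt E_K(\phi,\psi)$ is not a ``standard chain rule'': $(\delt\phi,\delt\psi)$ lies only in $L^2(0,T;(\mathcal{H}^1_{L,\beta})')$, and while the quadratic gradient and penalty parts of $E_K$ can be differentiated by the usual $\mathcal{H}^1$--$(\mathcal{H}^1)'$ chain rule, the singular potential contributions cannot. The paper justifies this via the convex functional $\widetilde{E}_K$: Proposition~\ref{App:Proposition:Subdiff} identifies $\partial\widetilde{E}_K(\phi(t),\psi(t))=(-\Lap\phi+F_1'(\phi),\,-\Lapg\psi+G_1'(\psi)+\alpha\deln\phi)\in\mathcal{L}^2$ (which requires exactly the a.e.\ formulation and $\mathcal{L}^2$-regularity of the nonlinearities you establish), and then a chain-rule lemma for subdifferentials of proper, convex, lower semi-continuous functionals along trajectories in $H^1(0,T;(\mathcal{H}^1_{L,\beta})')\cap L^2(0,T;\mathcal{H}^1)$ (\cite[Lemma~4.1]{Rocca2004}) yields absolute continuity of $t\mapsto\widetilde{E}_K(\phi(t),\psi(t))$ together with $\ddt\widetilde{E}_K(\phi,\psi)=\ang{(\delt\phi,\delt\psi)}{(\mu-F_2'(\phi),\theta-G_2'(\psi))}_{\mathcal{H}^1}$; one also needs $(\phi,\psi)\in C([0,T];\mathcal{H}^1)$ so that $E_K(\phi(t),\psi(t))$ makes pointwise sense in \eqref{WEDL:SING}, which the paper deduces afterwards. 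Without an argument of this type, your phrase ``combined with the chain rule applied to $E_K$'' is an assertion rather than a proof; the rest of your proposal is sound.
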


The proof of this theorem can be found in Section~\ref{SECT:EXWS}.

Via regularity theory for a bulk-surface elliptic system with singular nonlinearities (see Section~\ref{SECT:BSE}), we can further improve the time regularities of the phase-fields $\phi$ and $\psi$ for any weak solution.

\begin{theorem}
    \label{THM:TIMEREG}
    Let $(\phi,\psi,\mu,\theta)$ be a weak solution to System~\eqref{EQ:SYSTEM} in the sense of Definition~\ref{DEF:SING:WS}. Then, if $K\in (0,\infty)$, it holds $(\phi, \psi) \in L^4(0,T;\mathcal{H}^2)$, while in the case $K = 0$, $(\phi, \psi)\in L^3(0,T;\mathcal{H}^2)$.
\end{theorem}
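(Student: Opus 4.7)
The strategy is to treat the identities \eqref{Eq:mu:strong}--\eqref{Eq:bd:strong} (valid pointwise in time for the weak solution, either directly from Theorem~\ref{THEOREM:EOWS} or by applying the regularity theory of Section~\ref{SECT:BSE} to the weak formulation \eqref{WF:MT:SING}) as, for a.e.~$t\in(0,T)$, a singular bulk--surface elliptic system of the type studied in Section~\ref{SECT:BSE}. Splitting $F'=F_1'+F_2'$ and $G'=G_1'+G_2'$, the pair $(\phi(t),\psi(t))\in\mathcal{H}^1_{K,\alpha}$ solves \eqref{INTRO:BSE:1}--\eqref{INTRO:BSE:3} with the right-hand sides
\begin{align*}
f(t)\coloneqq \mu(t)-F_2'(\phi(t)), \qquad g(t)\coloneqq \theta(t)-G_2'(\psi(t)).
\end{align*}
The confinement $|\phi|,|\psi|\leq 1$ from \eqref{PROP:CONF}, together with the global Lipschitz continuity of $F_2'$ and $G_2'$, guarantees that the corrections $F_2'(\phi)$ and $G_2'(\psi)$ are essentially bounded on $Q$ and $\Sigma$. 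Combined with the weak-solution regularity $(\mu,\theta)\in L^2(0,T;\mathcal{H}^1_{L,\beta})\hookrightarrow L^2(0,T;\mathcal{L}^2)$, this yields $(f,g)\in L^2(0,T;\mathcal{L}^2)$.

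Next, I invoke the regularity theory of Section~\ref{SECT:BSE} pointwise in time. This provides, for a.e.~$t\in(0,T)$, an estimate controlling $\|(\phi(t),\psi(t))\|_{\mathcal{H}^2}$ in terms of $\|(f(t),g(t))\|_{\mathcal{L}^2}$ and of lower-order quantities uniformly bounded by $\|(\phi,\psi)\|_{L^\infty(0,T;\mathcal{H}^1_{K,\alpha})}$. The key point is the precise growth of the estimate in $\|(f(t),g(t))\|_{\mathcal{L}^2}$, which depends on the type of boundary coupling. In the Robin case $K\in(0,\infty)$, I expect the elliptic estimate to have the form $\|(\phi(t),\psi(t))\|_{\mathcal{H}^2}^2\leq C\bigl(1+\|(f(t),g(t))\|_{\mathcal{L}^2}\bigr)$; squaring this bound and integrating in time yields $\int_0^T\|(\phi,\psi)\|_{\mathcal{H}^2}^4\dt\leq C\int_0^T(1+\|(f,g)\|_{\mathcal{L}^2}^2)\dt<\infty$, giving $(\phi,\psi)\in L^4(0,T;\mathcal{H}^2)$. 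In the Dirichlet-type case $K=0$, the coupling $\phi=\alpha\psi$ on $\Gamma$ precludes the Robin-type absorption and forces one to control the normal trace $\deln\phi$ through an additional elliptic step, producing a weaker bound of the form $\|(\phi(t),\psi(t))\|_{\mathcal{H}^2}^3\leq C\bigl(1+\|(f(t),g(t))\|_{\mathcal{L}^2}^2\bigr)$, which integrates directly to $(\phi,\psi)\in L^3(0,T;\mathcal{H}^2)$.

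The real work, and therefore the main obstacle, lies entirely inside Section~\ref{SECT:BSE}: one has to establish the pointwise-in-time $\mathcal{H}^2$ estimates for the singular bulk--surface elliptic system \eqref{INTRO:BSE} with the correct powers of $\|(f,g)\|_{\mathcal{L}^2}$ in each of the cases $K\in(0,\infty)$ and $K=0$. The ingredients I anticipate using are the convexity bound \eqref{Assumption:Pot:Convexity} (to get an $L^2$ bound on $F_1'(\phi)$, $G_1'(\psi)$ and thus on $\Lap\phi$, $\Lapg\psi$), the domination inequality \eqref{Domination:Prime} (to transfer control between the bulk and boundary singular parts, indispensable when $K=0$), and the trace interpolation of Lemma~\ref{Prelim:Lemma:Interpol} (to absorb boundary integrals produced by integration by parts). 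Once these elliptic estimates are in place, the proof of Theorem~\ref{THM:TIMEREG} is the short time-integration argument sketched above.
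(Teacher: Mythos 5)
Your overall strategy coincides with the paper's: view \eqref{Eq:mu:strong}--\eqref{Eq:bd:strong} at a.e.\ fixed time as the singular bulk--surface elliptic system \eqref{BSE} with data $f=\mu-F_2'(\phi)$, $g=\theta-G_2'(\psi)$, apply the regularity theory of Section~\ref{SECT:BSE}, and integrate in time. However, the quantitative elliptic estimates you posit are not available, and this is a genuine gap. You deliberately retain only $(f,g)\in L^2(0,T;\mathcal{L}^2)$ and then claim bounds that are \emph{sublinear} in $\norm{(f,g)}_{\mathcal{L}^2}$, namely $\norm{(\phi,\psi)}_{\mathcal{H}^2}^2\le C(1+\norm{(f,g)}_{\mathcal{L}^2})$ for $K>0$ and $\norm{(\phi,\psi)}_{\mathcal{H}^2}^3\le C(1+\norm{(f,g)}_{\mathcal{L}^2}^2)$ for $K=0$. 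Such estimates cannot hold: already for the linearized problem $-\Lap u+\Theta_\Om u=f$ with homogeneous Neumann data, highly oscillatory right-hand sides (e.g.\ $f=M\sin(Nx)$ with $N^2\gg M$) produce solutions with $\norm{u}_{L^\infty}\le 1$ but $\norm{u}_{H^2}\sim\norm{f}_{L^2}$ arbitrarily large, so the $\mathcal{H}^2$-norm scales \emph{linearly} in the $\mathcal{L}^2$-norm of the data and the confinement $\abs{\phi},\abs{\psi}\le 1$ gives no sublinear gain. The correct estimate available from only $\mathcal{L}^2$ data is Proposition~\ref{App:Prop:Existence}, $\norm{(\phi,\psi)}_{\mathcal{H}^2}\le C(1+\norm{(f,g)}_{\mathcal{L}^2})$, which upon time integration yields merely $(\phi,\psi)\in L^2(0,T;\mathcal{H}^2)$ — strictly weaker than the claimed $L^4$ resp.\ $L^3$.

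The missing idea is that the gain in time integrability comes from exploiting the full regularity $(\mu,\theta)\in L^2(0,T;\mathcal{H}^1_{L,\beta})$, i.e.\ $(f,g)(t)\in\mathcal{H}^1$ with $\norm{(f,g)}_{\mathcal{H}^1}\in L^2(0,T)$, combined with the uniform bound $(\phi,\psi)\in L^\infty(0,T;\mathcal{H}^1_{K,\alpha})$. This is exactly what Corollary~\ref{COR:BSE:OP} encodes: testing \eqref{BSE:1}--\eqref{BSE:2} with the principal part and integrating by parts moves one derivative onto $(f,g)$, so that for $K\in(0,\infty)$ one gets
\begin{align*}
\norm{(-\Lap\phi,-\Lapg\psi+\alpha\deln\phi)}_{\mathcal{L}^2}^2
\le C\big(1+\norm{(f,g)}_{\mathcal{H}^1}\big)
+\norm{(\Grad f,\Gradg g)}_{\mathcal{L}^2}\norm{(\Grad\phi,\Gradg\psi)}_{\mathcal{L}^2},
\end{align*}
whose right-hand side lies in $L^2(0,T)$ because $\norm{(\Grad\phi,\Gradg\psi)}_{\mathcal{L}^2}$ is bounded in time; it is the \emph{square} of the $\mathcal{H}^2$-norm that is controlled by an $L^2_t$ quantity, which is precisely where $L^4(0,T;\mathcal{H}^2)$ comes from. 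For $K=0$ the boundary terms involving $\deln\phi$ force the trace interpolation of Lemma~\ref{Prelim:Lemma:Interpol}, producing the extra factor $\norm{\Grad\phi}_{H^1(\Om)}^{1/2}$ on the right; absorbing it by Young's inequality leaves $\norm{(\phi,\psi)}_{\mathcal{H}^2}^{3/2}\le C(1+\norm{(f,g)}_{\mathcal{H}^1})$, hence $L^3(0,T;\mathcal{H}^2)$. So your exponent heuristics point in the right direction, but your proof cannot be repaired without reinstating the $\mathcal{H}^1$-regularity of the chemical potentials as the driving input.
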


The proof of this theorem is presented in Section~\ref{SECT:EXWS}.

\begin{remark}
    If $K = \infty$ and $(\phi,\psi,\mu,\theta)$ is a weak solution to \eqref{EQ:SYSTEM} in the sense of Definition~\ref{DEF:SING:WS}, it also holds that $(\phi,\psi)\in L^4(0,T;\mathcal{H}^2)$. In this case, we have $\deln\phi = 0$ almost everywhere on $\Sigma$, and the equations \eqref{EQ:SYSTEM:2} and \eqref{EQ:SYSTEM:3} are uncoupled. Consequently, the argumentation is even easier and we simply refer to \cite[Lemma~A.3]{Conti2020}.
\end{remark}

Under the regularity assumption \eqref{ASS:VEL:WEAK:OLD} on the velocity fields and provided that the mobility functions are constant, it was further shown in \cite[Theorem 3.6]{Knopf2024a} that the weak solution is unique and depends continuously on the velocity fields in the $L^2(0,T;\mathbf{L}^3(\Om)\times\mathbf{L}^{2+\omega}(\Ga))$-norm and the initial data. The next theorem shows that this results can be strengthened by providing a continuous dependence, where the differences of the velocity fields are merely measured in the $L^2(0,T;\mathcal{L}^2)$-norm.

\begin{theorem}\label{THEOREM:UNIQUE:SING}
    Assume that the mobility functions $m_\Om$ and $m_\Ga$ are constant, let $K, L\in[0,\infty]$, let $(\phi_0^1,\psi_0^1)$, $(\phi_0^2,\psi_0^2)\in\mathcal{H}^1_{K,\alpha}$ be two pairs of initial data, which satisfy
    \begin{align}\label{initial-data-mean-value}
        \begin{cases}
            \mean{\phi_0^1}{\psi_0^1} = \mean{\phi_0^2}{\psi_0^2} \quad&\text{if~} L\in[0,\infty), \\
            \meano{\phi_0^1} = \meano{\phi_0^2} \quad\text{and}\quad \meang{\psi_0^1} = \meang{\psi_0^2} &\text{if~} L=\infty,
        \end{cases}
    \end{align}
    as well as \eqref{cond:init}, and let 
    \begin{align*}
        (\boldsymbol{v}_1,\boldsymbol{w}_1) 
        \in L^2(0,T;\mathcal{L}^2_\Div)
        \quad\text{and}\quad
        (\boldsymbol{v}_2,\boldsymbol{w}_2) 
        \in L^2(0,T;\mathbf{L}^3_\Div(\Om)\times\mathbf{L}^{2+\omega}_\tau(\Ga))
    \end{align*}
    for some $\omega > 0$ be prescribed velocity fields.
    Moreover, let $(\phi_1,\psi_1,\mu_1,\theta_1)$ and $(\phi_2,\psi_2,\mu_2,\theta_2)$ be weak solutions in the sense of Definition~\ref{DEF:SING:WS} corresponding to $(\phi_0^1, \psi_0^1, \boldsymbol{v}_1, \boldsymbol{w}_1)$ and $(\phi_0^2, \psi_0^2, \boldsymbol{v}_2, \boldsymbol{w}_2)$, respectively. 
    Then, the continuous dependence estimate
    \begin{align}\label{EST:continuous-dependence:sing}
        \begin{split}
            &\bignorm{\bigscp{\phi_1(t) - \phi_2(t)}{\psi_1(t) - \psi_2(t)}}_{L,\beta,\ast}^2 + \int_0^t \bignorm{\bigscp{\phi_1(\tau) - \phi_2(\tau)}{\psi_1(\tau) - \psi_2(\tau)}}_{K,\alpha}^2 \dtau
            \\[0.4em]
            &\leq C_0\bignorm{\bigscp{\phi_0^1 - \phi_0^2}{\psi_0^1 - \psi_0^2}}_{L,\beta,\ast}^2 \exp\left(C_0\int_0^t 1 + \norm{\scp{\boldsymbol{v}_2(\tau)}{\boldsymbol{w}_2(\tau)}}_{\mathbf{L}^3(\Om)\times\mathbf{L}^{2+\omega}(\Ga)}^2 \dtau\right) 
            \\
            &\quad+ C_0\int_0^t 
            \bignorm{\bigscp{\boldsymbol{v}_1(s) - \boldsymbol{v}_2(s)}{\boldsymbol{w}_1(s) - \boldsymbol{w}_2(s)}}_{\mathcal{L}^2}^2 \\
            &\qquad\times\exp\left(C_0\int_s^t 1 + \norm{\scp{\boldsymbol{v}_2(\tau)}{\boldsymbol{w}_2(\tau)}}_{\mathbf{L}^3(\Om)\times\mathbf{L}^{2+\omega}(\Ga)}^2\dtau\right)\ds
        \end{split}
    \end{align}
    holds for almost all $t\in[0,T]$. Here, the constants denoted by $C_0$ depend only on $\Om, K, L, \alpha, \beta$ and the Lipschitz constants of $F_2^\prime$ and $G_2^\prime$.
    In particular, this entails that the weak solution found in Theorem~\ref{THEOREM:EOWS} is unique.
\end{theorem}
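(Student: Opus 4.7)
The plan is to derive an evolution inequality for the difference of the two weak solutions in the negative-order norm $\norm{\cdot}_{L,\beta,\ast}$ and then close it via Gronwall's lemma. Set $\Phi = \phi_1 - \phi_2$, $\Psi = \psi_1 - \psi_2$, $M = \mu_1 - \mu_2$, $\Theta = \theta_1 - \theta_2$, $V = \boldsymbol{v}_1 - \boldsymbol{v}_2$ and $W = \boldsymbol{w}_1 - \boldsymbol{w}_2$. Hypothesis \eqref{initial-data-mean-value}, combined with the mass conservation law \eqref{MCL:SING}, guarantees that $(\Phi(t),\Psi(t)) \in \mathcal{V}^{-1}_{L,\beta}$ for every $t\in[0,T]$, so that the solution operator $\mathcal{S}_{L,\beta}$ introduced in Subsection~\ref{SUBSECT:NOTFS} is applicable. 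I test the subtracted form of \eqref{WF:PP:SING} with $(\zeta,\xi) = \mathcal{S}_{L,\beta}(\Phi,\Psi)$; the constancy of the mobilities is essential here, as this is what makes the diffusion terms on the right-hand side proportional to the bilinear form $((M,\Theta), \mathcal{S}_{L,\beta}(\Phi,\Psi))_{L,\beta}$. Applying the defining identity of $\mathcal{S}_{L,\beta}$ twice then converts the time-derivative pairing into $\tfrac{1}{2}\tfrac{d}{dt}\norm{(\Phi,\Psi)}_{L,\beta,\ast}^2$ and the diffusion pairing into $\int_\Omega \Phi M \dx + \int_\Gamma \Psi \Theta \dG$, reducing the analysis to estimating this bulk-surface pairing and the convective discrepancies.

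To extract a coercive term from $\int_\Omega \Phi M \dx + \int_\Gamma \Psi \Theta \dG$, I next test the subtracted chemical-potential equation \eqref{WF:MT:SING} with $(\Phi,\Psi) \in \mathcal{H}^1_{K,\alpha}$, admissible by the weak-solution regularity \eqref{REGPP:SING}. This yields
\begin{align*}
\int_\Omega M\Phi \dx + \int_\Gamma \Theta\Psi \dG = \norm{(\Phi,\Psi)}_{K,\alpha}^2 + \int_\Omega \bigl(F'(\phi_1) - F'(\phi_2)\bigr)\Phi \dx + \int_\Gamma \bigl(G'(\psi_1) - G'(\psi_2)\bigr)\Psi \dG.
\end{align*}
After splitting $F' = F_1' + F_2'$ and $G' = G_1' + G_2'$, the convex singular parts $F_1', G_1'$ contribute nonnegatively by monotonicity, while the Lipschitz regular parts yield at most $C\norm{(\Phi,\Psi)}_{\mathcal{L}^2}^2$. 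Because \eqref{initial-data-mean-value} and \eqref{MCL:SING} force the generalized mean of $(\Phi,\Psi)$ to vanish, the Poincaré inequality (Lemma~\ref{Prelim:Poincare}) together with an interpolation bound of the form $\norm{(\Phi,\Psi)}_{\mathcal{L}^2}^2 \leq C \norm{(\Phi,\Psi)}_{L,\beta,\ast}\norm{(\Phi,\Psi)}_{K,\alpha}$ and Young's inequality absorb this remainder into the coercive $\norm{(\Phi,\Psi)}_{K,\alpha}^2$ at the price of a $C\norm{(\Phi,\Psi)}_{L,\beta,\ast}^2$ contribution.

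The main obstacle is to control the convective discrepancy
\begin{align*}
\int_\Omega (\phi_1\boldsymbol{v}_1 - \phi_2\boldsymbol{v}_2) \cdot \Grad \mathcal{S}^\Omega_{L,\beta}(\Phi,\Psi) \dx + \int_\Gamma (\psi_1\boldsymbol{w}_1 - \psi_2\boldsymbol{w}_2) \cdot \Gradg \mathcal{S}^\Gamma_{L,\beta}(\Phi,\Psi) \dG,
\end{align*}
given that $\boldsymbol{v}_1$ and $\boldsymbol{w}_1$ possess only $L^2$ spatial integrability. The decisive step is the asymmetric splitting $\phi_1\boldsymbol{v}_1 - \phi_2\boldsymbol{v}_2 = \phi_1 V + \Phi \boldsymbol{v}_2$ (and its surface counterpart $\psi_1\boldsymbol{w}_1 - \psi_2\boldsymbol{w}_2 = \psi_1 W + \Psi \boldsymbol{w}_2$): the low-regularity difference $V$ is paired with the bounded phase-field $\phi_1$ (using $\abs{\phi_1} \leq 1$ from \eqref{PROP:CONF}), which by Cauchy--Schwarz is controlled by $\norm{V}_{\mathbf{L}^2(\Omega)}\norm{(\Phi,\Psi)}_{L,\beta,\ast}$, while the factor $\Phi$ is tested against the higher-integrability $\boldsymbol{v}_2$ via Hölder with exponents $(6,3,2)$ combined with the Sobolev embedding $H^1(\Omega) \emb L^6(\Omega)$, producing $C\norm{\boldsymbol{v}_2}_{\mathbf{L}^3}\norm{(\Phi,\Psi)}_{K,\alpha}\norm{(\Phi,\Psi)}_{L,\beta,\ast}$. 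Young's inequality then absorbs the $K,\alpha$-factor into the coercive norm and leaves a $C\norm{\boldsymbol{v}_2}_{\mathbf{L}^3}^2\norm{(\Phi,\Psi)}_{L,\beta,\ast}^2$ term. The surface contribution is handled identically, using the embedding $H^1(\Gamma) \emb L^r(\Gamma)$ with $r = 2(2+\omega)/\omega$ (valid on the two-dimensional surface $\Gamma$) together with $\boldsymbol{w}_2 \in \mathbf{L}^{2+\omega}_\tau(\Gamma)$; this is precisely the step that requires the stronger integrability assumption on $\boldsymbol{w}_2$.

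Collecting all contributions produces a differential inequality of the form
\begin{align*}
\frac{d}{dt}\norm{(\Phi,\Psi)}_{L,\beta,\ast}^2 + c\norm{(\Phi,\Psi)}_{K,\alpha}^2 \leq C\bigl(1 + \norm{\boldsymbol{v}_2}_{\mathbf{L}^3(\Omega)}^2 + \norm{\boldsymbol{w}_2}_{\mathbf{L}^{2+\omega}(\Gamma)}^2\bigr)\norm{(\Phi,\Psi)}_{L,\beta,\ast}^2 + C\norm{(V,W)}_{\mathcal{L}^2}^2.
\end{align*}
Since the coefficient $1 + \norm{\boldsymbol{v}_2}_{\mathbf{L}^3}^2 + \norm{\boldsymbol{w}_2}_{\mathbf{L}^{2+\omega}}^2$ belongs to $L^1(0,T)$ by the hypotheses on $\boldsymbol{v}_2,\boldsymbol{w}_2$, the integrated Gronwall lemma delivers precisely \eqref{EST:continuous-dependence:sing}. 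Uniqueness is then an immediate consequence, obtained by setting $(\phi_0^1,\psi_0^1) = (\phi_0^2,\psi_0^2)$ and $(\boldsymbol{v}_1,\boldsymbol{w}_1) = (\boldsymbol{v}_2,\boldsymbol{w}_2)$.
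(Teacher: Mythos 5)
Your proposal is correct and follows essentially the same route as the paper: the same asymmetric splitting $\phi_1\boldsymbol{v}+\phi\boldsymbol{v}_2$ (resp. $\psi_1\boldsymbol{w}+\psi\boldsymbol{w}_2$) of the convective terms, the same use of $\abs{\phi_1}<1$, $\abs{\psi_1}<1$, the Sobolev and bulk-surface Poincar\'e inequalities, and Gronwall's lemma applied to the differential inequality in the $\norm{\cdot}_{L,\beta,\ast}$-norm with dissipation in $\norm{\cdot}_{K,\alpha}$. In fact, you spell out the derivation of that differential inequality (testing with $\mathcal{S}_{L,\beta}(\Phi,\Psi)$ and with $(\Phi,\Psi)$, monotonicity of $F_1',G_1'$, Lipschitz bounds for $F_2',G_2'$, and interpolation), which the paper only cites from earlier works.
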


The proof of this theorem can be found in Section~\ref{SECT:EXWS}.

\subsection{Strong solutions}

We now deal with the well-posedness of strong solutions.
The global existence was obtained in \cite{Knopf2024a}, where the velocity fields were required to satisfy the conditions
\begin{align}
    \label{ASS:VEL:STRONG:OLD}
    \left\{
    \begin{aligned}
    \boldsymbol{v} &\in H^1(0,T;\mathbf{L}^{6/5}(\Om))\cap L^2(0,T;\mathbf{L}_\Div^3(\Om))\cap L^\infty(0,T;\mathbf{L}^2(\Om)),
    \\
    \boldsymbol{w}&\in H^1(0,T;\mathbf{L}^{1+\omega}(\Ga))\cap L^2(0,T;\mathbf{L}_\Div^{2}(\Ga))\cap L^\infty(0,T;\mathbf{L}^2(\Ga))
    \end{aligned}
    \right.
\end{align}
for some $\omega > 0$. In the following theorem, we show that, if $K\in[0,\infty)$ and $L\in(0,\infty]$, well-posedness of strong solutions is obtained by assuming that the velocity fields are of Leray type, that is
\begin{align}
    \label{ASS:VEL:STRONG}
    (\boldsymbol{v}, \boldsymbol{w})\in L^\infty(0,T;\mathcal{L}_{\Div}^2)\cap L^2(0,T;\mathcal{H}^1).
\end{align}
This means that, in comparison with \eqref{ASS:VEL:STRONG:OLD}, we require more spatial regularity but less time regularity. The advantage of condition \eqref{ASS:VEL:STRONG} is that these Leray type regularities are exactly what can be expected from a weak solution of a Navier--Stokes equation in the bulk or on the surface, respectively. Therefore, compared to the result from \cite{Knopf2024a}, our new result seems more useful in the investigation of possible bulk-surface Navier--Stokes--Cahn--Hilliard models. Our result reads as follows.

\begin{theorem}
    \label{Theorem:HighReg}
    Let $K\in[0,\infty)$ and $L\in(0,\infty]$. We assume that $\Omega$ is of class $C^3$, the pair $(\phi_0,\psi_0)\in\mathcal{H}^1_{K,\alpha}$ satisfies \eqref{cond:init} and the mobility functions $m_\Om$ and $m_\Ga$ are constant. Moreover, let $(\boldsymbol{v}, \boldsymbol{w})\in L^\infty(0,T;\mathcal{L}_{\Div}^2)\cap L^2(0,T;\mathcal{H}^1)$ be given velocity fields. If $K = 0$, we further assume that $\boldsymbol{v}\vert_\Ga = \boldsymbol{w}$ a.e.~on $\Ga$.
    We further assume that the following compatibility condition holds:
    \begin{enumerate}[label=\textnormal{\bfseries(C)},topsep=0ex,leftmargin=*]
        \item \label{cond:MT:0} There exists $\scp{\mu_0}{\theta_0}\in\mathcal{H}^1_{L,\beta}$ such that for all $\scp{\eta}{\vartheta}\in\mathcal{H}^1_{K,\alpha}$ it holds
        \begin{align*}
        \begin{aligned}
            &\intO\mu_0\eta\dx + \intG\theta_0\vartheta\dG 
            \\
            &= \intO\Grad\phi_0\cdot\Grad\eta + F^\prime(\phi_0)\eta\dx + \intG\Gradg\psi_0\cdot\Gradg\vartheta + G^\prime(\psi_0)\vartheta\dG 
            \\
            &\quad + \sigma(K)\intG(\alpha\psi_0 - \phi_0)(\alpha\vartheta - \eta)\dG.
        \end{aligned}
        \end{align*}
    \end{enumerate}
    Then, the corresponding unique weak solution $(\phi,\psi,\mu,\theta)$ of System~\eqref{EQ:SYSTEM} obtained by Theorem~\ref{THEOREM:EOWS} has the following regularity:
    \begin{subequations}
        \begin{align*}
            (\delt\phi,\delt\psi) &\in L^\infty(0,T;(\mathcal{H}^1_{L,\beta})^\prime) \cap L^2(0,T;\mathcal{H}^1), \\
            (\phi,\psi)&\in L^\infty(0,T;\mathcal{W}^{2,6})\cap \Big(C(\overline{Q})\times C(\overline\Sigma)\Big), \\
            (\mu,\theta)&\in L^\infty(0,T;\mathcal{H}^1_{L,\beta})\cap L^2(0,T;\mathcal{H}^3), \\
            (F^\prime(\phi), G^\prime(\psi))&\in L^2(0,T;\mathcal{L}^\infty)\cap L^\infty(0,T;\mathcal{L}^6).
        \end{align*}
    \end{subequations}
    In particular, $(\phi,\psi,\mu,\theta)$ is the unique strong solution of System~\eqref{EQ:SYSTEM}, i.e., all equations of System~\eqref{EQ:SYSTEM} are fulfilled almost everywhere. Moreover, the following estimates hold
    \begin{align}\label{HighReg:Est:MT:Lb:sup}
        \begin{split}
            \esssup_{t\in(0,T)}\norm{(\mu(t),\theta(t))}_{L,\beta}^2 &\leq C_0\bigg(1 + \norm{(\mu_0,\theta_0)}_{L,\beta}^2 + \int_0^T\norm{(\boldsymbol{v},\boldsymbol{w})}_{\mathcal{H}^1}^2\ds\bigg) \\
            &\quad\times\exp\bigg(C_0\int_0^T\norm{(\boldsymbol{v},\boldsymbol{w})}_{\mathcal{H}^1}\ds\bigg)
        \end{split}
    \end{align}
    and
    \begin{align}\label{HighReg:Est:PP:Ka:L^2}
        \begin{split}
            &\int_0^T \norm{(\delt\phi,\delt\psi)}_{K,\alpha}^2 
            + \norm{(\Grad\mu,\Gradg\theta)}_{\mathcal{H}^2}^2 \ds
            \\
            &\quad
            \leq C_0\bigg(1 + \norm{(\mu_0,\theta_0)}_{L,\beta}^2 + \int_0^T\norm{(\boldsymbol{v},\boldsymbol{w})}_{\mathcal{H}^1}^2\ds\bigg) 
            \\
            &\qquad
            \times\bigg(1 + \bigg(\int_0^T\norm{(\boldsymbol{v},\boldsymbol{w})}_{\mathcal{H}^1}^2\ds\bigg)\exp\left(C_0\int_0^T\norm{(\boldsymbol{v},\boldsymbol{w})}_{\mathcal{H}^1}^2\ds\right)\bigg).
        \end{split}
    \end{align}
   Here, the constants denoted by $C_0$ depends only on $\Om, K, L, \alpha, \beta, F, G, \norm{(\mu_0,\theta_0)}_{L,\beta}$ and the initial energy $E_K(\phi_0,\psi_0)$.
\end{theorem}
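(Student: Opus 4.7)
The plan is to combine a suitable approximation scheme with uniform a priori estimates that exploit only the Leray-type regularity \eqref{ASS:VEL:STRONG}. Since $(\boldsymbol{v},\boldsymbol{w})$ carries no time regularity, I would regularize only the singular potentials (e.g.\ via a Moreau--Yosida truncation of $F_1$ and $G_1$) and mollify $(\boldsymbol{v},\boldsymbol{w})$ in time, obtaining smoother approximations $(\boldsymbol{v}_\ep,\boldsymbol{w}_\ep)$ that converge to the originals in $L^2(0,T;\mathcal{H}^1)\cap L^\infty(0,T;\mathcal{L}^2)$. For each $\ep>0$, \cite[Theorem~3.7]{Knopf2024a} provides a smooth strong solution $(\phi_\ep,\psi_\ep,\mu_\ep,\theta_\ep)$ for which the formal computations below are rigorous, and the limit as $\ep\to0$ is identified with the weak solution of Theorem~\ref{THEOREM:EOWS} via standard compactness and the uniqueness result Theorem~\ref{THEOREM:UNIQUE:SING}.

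The central a priori estimate is a master identity of Cahn--Hilliard type. First I would test the weak formulation \eqref{WF:PP:SING} for $(\partial_t\phi,\partial_t\psi)$ against $(\partial_t\mu,\partial_t\theta)\in\mathcal{H}^1_{L,\beta}$ and, in parallel, differentiate the chemical-potential equations \eqref{Eq:mu:strong}--\eqref{Eq:bd:strong} in time and test them against $(\partial_t\phi,\partial_t\psi)\in\mathcal{H}^1_{K,\alpha}$. Equating the two expressions obtained for $\intO\partial_t\phi\,\partial_t\mu\dx+\intG\partial_t\psi\,\partial_t\theta\dG$ and combining the resulting boundary terms through the time-differentiated coupling relations $K\deln\partial_t\phi=\alpha\partial_t\psi-\partial_t\phi$ and $L\deln\partial_t\mu=\beta\partial_t\theta-\partial_t\mu$ produces
\begin{align*}
&\tfrac{1}{2}\ddt\norm{\scp{\mu}{\theta}}_{L,\beta}^{2} + \norm{\scp{\partial_t\phi}{\partial_t\psi}}_{K,\alpha}^{2} + \intO F''(\phi)|\partial_t\phi|^2\dx + \intG G''(\psi)|\partial_t\psi|^2\dG \\
&\qquad = \intO\phi\,\boldsymbol{v}\cdot\Grad\partial_t\mu\dx + \intG\psi\,\boldsymbol{w}\cdot\Gradg\partial_t\theta\dG =: \mathcal{I}.
\end{align*}
The contributions of $F_1''\geq\Theta_\Om$ and $G_1''\geq\Theta_\Ga$ are non-negative, while those of $F_2''$, $G_2''$ are lower-order and handled via the Lipschitz bounds.

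The main obstacle is to estimate $\mathcal{I}$ using only the norms of $(\boldsymbol{v},\boldsymbol{w})$ in $\mathcal{H}^1\cap\mathcal{L}^2$, without any time derivative. Using $\Div\boldsymbol{v}=\Divg\boldsymbol{w}=0$ and $\boldsymbol{v}\cdot\n=\boldsymbol{w}\cdot\n=0$, integration by parts in space gives $\mathcal{I}=-\intO(\boldsymbol{v}\cdot\Grad\phi)\partial_t\mu\dx-\intG(\boldsymbol{w}\cdot\Gradg\psi)\partial_t\theta\dG$. Substituting $\partial_t\mu=-\Lap\partial_t\phi+F''(\phi)\partial_t\phi$ and its surface counterpart, and integrating by parts once more, converts $\mathcal{I}$ into a sum of terms of the form $\intO\bigl((\Grad\boldsymbol{v})^{\!\top}\Grad\phi+\boldsymbol{v}\cdot\Grad^2\phi\bigr)\cdot\Grad\partial_t\phi\dx$, analogous surface integrals, plus lower-order contributions. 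These I would bound via the embeddings $H^1(\Om)\emb L^6(\Om)$, $W^{2,6}(\Om)\emb W^{1,\infty}(\Om)$, and the $L^\infty_t\mathcal{W}^{2,6}$-regularity of $(\phi,\psi)$ supplied by the bulk-surface elliptic theory of Section~\ref{SECT:BSE} applied to \eqref{INTRO:BSE} with source $(\mu-F_2'(\phi),\theta-G_2'(\psi))$. The final bound
\begin{align*}
    \abs{\mathcal{I}}\leq \tfrac{1}{2}\norm{\scp{\partial_t\phi}{\partial_t\psi}}_{K,\alpha}^{2} + C\,\norm{\scp{\boldsymbol{v}}{\boldsymbol{w}}}_{\mathcal{H}^1}^{2}\bigl(1+\norm{\scp{\mu}{\theta}}_{L,\beta}^{2}\bigr)
\end{align*}
follows, where the extra $\norm{\scp{\mu}{\theta}}_{L,\beta}^2$ factor arises from the elliptic estimate $\norm{\phi}_{W^{2,6}}\lesssim 1+\norm{\mu}_{H^1}$. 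Gr\"onwall's lemma then yields \eqref{HighReg:Est:MT:Lb:sup}, and a subsequent time-integration gives \eqref{HighReg:Est:PP:Ka:L^2}.

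Once these two estimates are in hand, the remaining regularities follow by bootstrap. The $L^2(0,T;\mathcal{H}^3)$-regularity of $(\mu,\theta)$ comes from elliptic regularity applied to the phase-field equations rewritten as $\Lap\mu=\partial_t\phi+\boldsymbol{v}\cdot\Grad\phi$ and its surface analogue, since the right-hand sides lie in $L^2(0,T;\mathcal{H}^1)$. The $L^\infty_t\mathcal{W}^{2,6}$ and $L^\infty_t\mathcal{L}^6$ bounds on $(\phi,\psi)$ and $(F'(\phi),G'(\psi))$, together with the strict separation property $|\phi|,|\psi|<1$, are all inherited from Section~\ref{SECT:BSE}; the $L^2_t\mathcal{L}^\infty$-bound on $(F'(\phi),G'(\psi))$ follows from Morrey's embedding $W^{2,p}\emb L^\infty$ for $p>d/2$; and the continuity of $(\phi,\psi)$ on $\overline{Q}\times\overline{\Sigma}$ is an immediate Sobolev consequence of the gained regularities. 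Passing to the limit $\ep\to0$ in the regularization is then routine thanks to the uniform bounds and the uniqueness of weak solutions.
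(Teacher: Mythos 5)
Your overall strategy is essentially the one the paper follows: Moreau--Yosida regularization of $F_1,G_1$, time mollification of $(\boldsymbol{v},\boldsymbol{w})$, a time-differentiated estimate in which $\tfrac12\ddt\norm{(\mu,\theta)}_{L,\beta}^2+\norm{(\delt\phi,\delt\psi)}_{K,\alpha}^2$ is controlled by convective terms that are reduced, using only $\Div\,\boldsymbol{v}=0$, $\Divg\,\boldsymbol{w}=0$, tangency and the uniform $L^\infty_t\mathcal{W}^{2,6}$ elliptic bound of Section~\ref{SECT:BSE}, to $C\norm{(\boldsymbol{v},\boldsymbol{w})}_{\mathcal{H}^1}^2\big(1+\norm{(\mu,\theta)}_{L,\beta}^2\big)$, followed by Gronwall, an elliptic bootstrap for $(\mu,\theta)\in L^2(0,T;\mathcal{H}^3)$, and passage to the limit. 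Your by-hand manipulation of $\mathcal{I}$ is the same computation the paper performs by testing the time-differentiated potential equation with $(\Grad\phi\cdot\boldsymbol{v},\Gradg\psi\cdot\boldsymbol{w})$ (which is also precisely where the hypothesis $\boldsymbol{v}\vert_\Ga=\boldsymbol{w}$ for $K=0$ enters, to make this pair admissible in $\mathcal{H}^1_{K,\alpha}$), and rigorously this step is carried out at the approximate level via difference quotients, since even for the regularized problem $(\delt\mu_\ep,\delt\theta_\ep)$ is a priori only in $L^2(0,T;(\mathcal{H}^1_{K,\alpha})')$, so one cannot literally test \eqref{WF:PP:SING} with it.

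The genuine gap is the initial data: you never specify with which initial datum the regularized problems are solved, yet the Gronwall step requires $\norm{(\mu_\ep(0),\theta_\ep(0))}_{L,\beta}$ to be bounded uniformly in $\ep$ and to converge to $\norm{(\mu_0,\theta_0)}_{L,\beta}$, otherwise \eqref{HighReg:Est:MT:Lb:sup} cannot be obtained with the stated right-hand side. If you keep $(\phi_0,\psi_0)$, then $\mu_\ep(0)=-\Lap\phi_0+F_\ep'(\phi_0)+F_2'(\phi_0)$, and condition \ref{cond:MT:0} only yields $\phi_0\in W^{2,6}(\Om)$, $F'(\phi_0)\in L^6(\Om)$; neither $\Grad\Lap\phi_0$ nor $F_{1,\ep}''(\phi_0)\Grad\phi_0$ is controlled (the latter degenerates like $\ep^{-1}$ where $\abs{\phi_0}$ is close to $1$), so no uniform $\mathcal{H}^1$-bound on the approximate initial chemical potentials is available. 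The paper resolves this in a separate step: it replaces $(\phi_0,\psi_0)$ by $(\phi_{0,\lambda},\psi_{0,\lambda})$, the solution of the Yosida-regularized bulk-surface elliptic system with the \emph{fixed} source $(\mu_0-F_2'(\phi_0),\theta_0-G_2'(\psi_0))$, so that $\mu_{0,\lambda}=\mu_0-F_2'(\phi_0)+F_2'(\phi_{0,\lambda})$ is uniformly bounded in $\mathcal{H}^1$ and converges to $\mu_0$, and then identifies the limit of $(\phi_{0,\lambda},\psi_{0,\lambda})$ with $(\phi_0,\psi_0)$ via the uniqueness result for the elliptic system; this is where \ref{cond:MT:0} is actually consumed, and it is also the reason for the restriction discussed in Remark~\ref{Rem:HighReg} when $L=0$. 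Two further points your sketch leaves open: the constants $C_0$ in \eqref{HighReg:Est:MT:Lb:sup}--\eqref{HighReg:Est:PP:Ka:L^2} must be independent of $T$ and of the velocity fields, which requires revisiting the basic energy estimate with the uniform bound $\abs{\phi_\ep}\le 2$ (obtained a posteriori from the uniform convergence to the separated limit) so that the convective terms are absorbed without a velocity-dependent Gronwall factor; and the elliptic bound you invoke, $\norm{\phi}_{W^{2,6}}\lesssim 1+\norm{\mu}_{H^1}$, needs the intermediate control of the bulk-surface mean of $(\mu,\theta)$ (via mass conservation and $L^1$-bounds on $F_{1,\ep}'$, $G_{1,\ep}'$, plus an Ehrling argument for $\deln\phi$ when $K=0$) to be expressed in terms of $\norm{(\mu,\theta)}_{L,\beta}$ alone.
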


The proof of this theorem will be presented in Section~\ref{SECT:HIGHREG}.

\medskip

\begin{remark} \label{Rem:HighReg}
    \begin{enumerate}[label=\textnormal{(\alph*)},topsep=0ex,leftmargin=*]
    \item\label{Rem:HighReg:a} 
    The results of Theorem~\ref{Theorem:HighReg} hold true in the case 
    $K = L = 0$
    if the regular parts of the potentials $F_2$ and $G_2$ satisfy the compatibility condition
    \begin{align*}
        F_2(\alpha s) = \alpha\beta G_2(s) \qquad\text{for all~}s\in[-1,1],
    \end{align*}
    with $\alpha,\beta\neq 0$. It seems that this compatibility condition cannot be avoided in our proof
    since in the underlying approximation scheme, the functions $\mu_{0,\lambda}$ (see \eqref{Def:Approx:M:0:lambda}) and $\theta_{0,\lambda}$ (see \eqref{Def:Approx:T:0:lambda}) need to satisfy the trace relation $\mu_{0,\lambda} = \beta\theta_{0,\lambda}$ almost everywhere on $\Gamma$ if $L = 0$.
    
    In the two-dimensional case (i.e., $d=2$) the statement of Theorem~\ref{Theorem:HighReg} remains correct also for $K\in [0,\infty)$ and $L=0$. In this case, both $\phi$ and $\psi$ satisfy a strict separation property (see~\cite[Theorem~3.11]{Knopf2024a}) and therefore, the proof can be carried out analogously, but without approximating the potentials and the initial data.
    
    \item\label{Rem:HighReg:b} The compatibility condition \ref{cond:MT:0} is, for example, fulfilled if initial data $(\phi_0,\psi_0)\in \mathcal{H}^3$ with 
    \begin{equation*}
        ( F'(\phi_0) , G'(\psi_0) ) \in \mathcal{H}^1
        \qquad\text{and}\qquad
        K\deln\phi_0 = \alpha\psi_0 - \phi_0 \quad\text{on $\Gamma$}
    \end{equation*}
    are prescribed. In this case, we simply set
    \begin{align*}
        (\mu_0,\theta_0) 
        &\coloneqq 
        (-\Lap\phi_0 + F^\prime(\phi_0) , 
        - \Lapg\psi_0 + G^\prime(\psi_0) + \alpha\deln\phi_0 )
        \in \mathcal{H}^1.
    \end{align*}
    \end{enumerate}
\end{remark}

\section{The subdifferential of the convex part of the free energy}
\label{SECT:SUBDIFF}

In this section, let $K\in [0,\infty)$ be arbitrary. 
To prove the regularity results stated in Theorem~\ref{THM:TIMEREG} and Theorem~\ref{Theorem:HighReg},
it makes sense to first gain more information about the energy functional $E_K$.
We recall that the potentials can be decomposed as $F=F_1+F_2$ and $G=G_1+G_2$, where $F_1$ and $G_1$ are the singular convex parts and $F_2$ and $G_2$ can be interpreted as smooth perturbations. 
To this end, we consider the functional $\widetilde{E}_K: \mathcal{L}^2 \to (-\infty,\infty]$ with
\begin{align*}
    \widetilde{E}_K(u,v) = 
    \begin{cases} 
    \displaystyle\intO \frac12\abs{\Grad u}^2 + F_1(u)\dx + \intG \frac12\abs{\Gradg v}^2 + G_1(v)\dG 
    &\smash{\raisebox{-1.6ex}
    {\text{for~}$(u,v)\in\mathrm{Dom}\,\tilde E_K,$}} 
    \\
    \quad + \displaystyle\sigma(K) \intG \frac12 (\alpha v - u)^2 \dG, 
    \\
    +\infty, &\text{else},
    \end{cases}
\end{align*}
where
\begin{equation*}
    \mathrm{Dom}\,\widetilde{E}_K = \big\{(u,v)\in\mathcal{H}^1_{K,\alpha}: \abs{u}\leq 1 
    \text{~a.e.~on~}\Om, \ \abs{v}\leq 1 \text{~a.e.~on~}\Ga\big\}.
\end{equation*}
Here, $F_1,G_1:[-1,1]\rightarrow\R$ satisfy the assumptions \eqref{Assumption:Pot:Sing}-\eqref{Domination:Prime}. We first observe that the functional $\widetilde E_K$ has the following properties.

\medskip

\begin{lemma}\label{Lemma:ConvEnergy:Prop}
    The functional $\widetilde E_K$ as defined above is proper, lower semi-continuous and convex.
\end{lemma}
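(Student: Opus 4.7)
The plan is to verify the three properties separately, in the order: proper, convex, lower semi-continuous. All three should follow from standard convex analysis arguments together with the assumptions on $F_1, G_1$ listed in~\eqref{Assumption:Pot:Sing}--\eqref{Domination:Prime}.

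For \emph{properness}, I would first observe that the pair $(0,0)$ lies in $\mathrm{Dom}\,\widetilde{E}_K$ (for $K=0$ we have $0 = \alpha\cdot 0$ on $\Gamma$, so the constraint of $\mathcal{H}^1_{K,\alpha}$ is trivially met). The normalization $F_1(0)=G_1(0)=0$ then yields $\widetilde{E}_K(0,0)=0$, so the domain is non-empty. Moreover, because $F_1'(0)=G_1'(0)=0$ and $F_1, G_1$ are convex with $F_1''\ge \Theta_\Om > 0$, $G_1''\ge \Theta_\Ga > 0$, the potentials attain their minima at $0$, giving $F_1, G_1 \ge 0$ on $[-1,1]$. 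Hence every term of $\widetilde{E}_K$ is non-negative, so $\widetilde{E}_K > -\infty$.

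For \emph{convexity}, the gradient contributions $\tfrac12 \int_\Om |\Grad u|^2$ and $\tfrac12 \int_\Ga|\Gradg v|^2$ are convex quadratic forms, the terms $\int_\Om F_1(u)$ and $\int_\Ga G_1(v)$ are convex as integrals of convex integrands, and the coupling $\tfrac12(\alpha v - u)^2$ is the square of a linear functional, hence convex. As a sum of convex functionals, $\widetilde E_K$ is convex on $\mathrm{Dom}\,\widetilde E_K$. The domain itself is convex, being the intersection of the linear subspace $\mathcal{H}^1_{K,\alpha}$ with the convex pointwise constraints $|u|\le 1$, $|v|\le 1$. Extending by $+\infty$ outside preserves convexity on all of $\mathcal{L}^2$.

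For \emph{lower semi-continuity} on $\mathcal{L}^2$, I would take a sequence $(u_n,v_n) \to (u,v)$ in $\mathcal{L}^2$ and assume, without loss of generality after extracting a subsequence, that $\liminf_n \widetilde E_K(u_n,v_n) = \lim_n \widetilde E_K(u_n,v_n) < \infty$, so that $(u_n,v_n)\in\mathrm{Dom}\,\widetilde E_K$ for large $n$. Since $F_1, G_1\ge 0$, the gradient bounds together with the pointwise bounds $|u_n|,|v_n|\le 1$ show $(u_n,v_n)$ is bounded in $\mathcal{H}^1_{K,\alpha}$, so up to a further subsequence $(u_n,v_n)\rightharpoonup (u,v)$ in $\mathcal{H}^1$; when $K=0$, the limit still lies in $\mathcal{H}^1_{K,\alpha}$ because this is a closed linear subspace (hence weakly closed). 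The pointwise bounds pass to the limit via a.e.\ convergence, placing $(u,v)\in\mathrm{Dom}\,\widetilde E_K$. Now I would apply three lower semi-continuity tools: weak lsc of the Dirichlet energies $\tfrac12\|\Grad\cdot\|_{L^2}^2$ and $\tfrac12\|\Gradg\cdot\|_{L^2}^2$ (convex continuous functionals are weakly lsc); Fatou's lemma applied to $F_1(u_n)$ and $G_1(v_n)$, which are non-negative and (after a.e.\ subsequential convergence) converge a.e.\ to $F_1(u)$ and $G_1(v)$ by continuity of $F_1, G_1$ on $[-1,1]$; and, for the coupling term, the weak continuity of the trace operator $H^1(\Om)\to L^2(\Ga)$ combined with $v_n\to v$ in $L^2(\Ga)$, which gives $\alpha v_n - u_n|_\Ga \rightharpoonup \alpha v - u|_\Ga$ in $L^2(\Ga)$ so that the squared-norm term is weakly lsc. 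Summing these inequalities yields $\widetilde E_K(u,v)\le \liminf_n \widetilde E_K(u_n,v_n)$.

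The only mildly delicate point I anticipate is handling the coupling term and the $K=0$ constraint, since lower semi-continuity is required with respect to the \emph{weaker} $\mathcal{L}^2$-topology rather than $\mathcal{H}^1$; this is overcome by the coercivity of $\widetilde E_K$ on $\mathcal{H}^1_{K,\alpha}$, which lets one upgrade $\mathcal{L}^2$-convergence of the minimizing (sub)sequence to weak $\mathcal{H}^1$-convergence, after which the continuity of the trace makes the coupling term behave as desired.
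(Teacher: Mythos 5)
Your proof is correct, and it is essentially the argument the paper has in mind: properness and convexity are noted as immediate consequences of the convexity and normalization of $F_1,G_1$, while for lower semi-continuity the paper simply refers to \cite[Lemma~4.1]{Abels2007}, whose standard argument (use the non-negativity of the potentials and the pointwise bounds to upgrade $\mathcal{L}^2$-convergence of an energy-bounded sequence to weak $\mathcal{H}^1_{K,\alpha}$-convergence, then combine weak lower semi-continuity of the Dirichlet and coupling terms with Fatou's lemma) is exactly what you carry out in detail, including the correct treatment of the trace constraint when $K=0$. So your proposal is a valid self-contained filling-in of the step the paper delegates to the reference.
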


\begin{proof}
    It is obvious that $\widetilde E_K$ is proper and convex since $F_1$ and $G_1$ are convex. The lower semi-continuity of $\widetilde E_K$ can be shown by proceeding analogously to \cite[Lemma~4.1]{Abels2007}. 
\end{proof}

\medskip

The main goal of this section is to investigate the subdifferential 
$\del \widetilde{E}_K$ of $\widetilde{E}_K$. It can be formulated as an operator
$\del \widetilde{E}_K:\mathcal{L}^2\rightarrow \mathcal{P}(\mathcal{L}^2)$, where
$\mathcal{P}(\mathcal{L}^2)$ denotes the power set of $\mathcal{L}^2$.
For any point $(u,v)\in\mathrm{Dom}\,\widetilde{E}_K$, 
it holds $(\zeta,\xi)\in\del\widetilde E_K(u,v)$ if and only if
\begin{align*}
 \bigscp{(\zeta,\xi)}{(u^\prime,v^\prime) - (u,v)}_{\mathcal{L}^2} \leq \widetilde E_K(u^\prime,v^\prime) - \widetilde E_K(u,v) \qquad\text{for all~}(u^\prime,v^\prime)\in\mathcal{L}^2.
\end{align*}
Moreover, the essential domain of $\del \widetilde{E}_K$ is given by
\begin{equation*}
    D(\del \widetilde{E}_K) 
    = \big\{ (u,v) \in \mathrm{Dom}\,\widetilde{E}_K : 
    \del \widetilde{E}_K(u,v) \neq \emptyset \big\}.
\end{equation*}
A characterization of the subdifferential $\del \widetilde{E}_K$ and its essential domain are provided by the following proposition.

\medskip

\begin{proposition}\label{App:Proposition:Subdiff}
    The subdifferential $\partial \widetilde{E}_K$ of $\widetilde{E}_K$ is a maximal monotone operator on $\mathcal{L}^2$, whose essential domain is given by
    \begin{align*}
        D(\partial \widetilde{E}_K) = \{(u, v)\in\mathcal{H}^2: (F_1^\prime(u), G_1^\prime(v))\in\mathcal{L}^2, \ K\deln u = \alpha v - u \ \text{on~}\Ga\}.
    \end{align*}
    For any $(u,v)\in D(\partial \widetilde{E}_K)$, it holds
    \begin{align}
    \label{EQ:DELE}
        \partial \widetilde{E}_K(u, v) = (-\Lap u + F_1^\prime(u), -\Lapg v + G_1^\prime(u) + \alpha\deln u).
    \end{align}
\end{proposition}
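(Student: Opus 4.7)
The plan is to introduce the operator $\mathcal{A}$ on the claimed set $D(\mathcal{A})$ by the formula in \eqref{EQ:DELE} and to establish $\mathcal{A}=\partial\widetilde{E}_K$ via the standard maximal-monotone scheme. Since $\widetilde{E}_K$ is proper, lower semi-continuous and convex by Lemma~\ref{Lemma:ConvEnergy:Prop}, Rockafellar's theorem tells us that $\partial\widetilde{E}_K$ is automatically maximal monotone on $\mathcal{L}^2$. It therefore suffices to show (a) the inclusion $\mathcal{A}\subseteq\partial\widetilde{E}_K$ as graphs and (b) the range condition $R(I+\mathcal{A})=\mathcal{L}^2$; by (b) and Minty's theorem, $\mathcal{A}$ is itself maximal monotone, and the inclusion of two maximal monotone operators forces equality, yielding simultaneously the formula for $\partial\widetilde{E}_K$ and the identification of its essential domain.

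For (a), fix $(u,v)\in D(\mathcal{A})$ and $(u',v')\in\mathrm{Dom}\,\widetilde{E}_K$. The pointwise convexity bounds $F_1(u')-F_1(u)\geq F_1'(u)(u'-u)$ and $G_1(v')-G_1(v)\geq G_1'(v)(v'-v)$, combined with the quadratic estimates $\tfrac12|a'|^2-\tfrac12|a|^2\geq a\cdot(a'-a)$ applied to $\Grad u$, $\Gradg v$ and (when $K>0$) to $\alpha v-u$, produce via Green's formulas in the bulk and on the surface a boundary term $\int_\Ga \deln u\,(u'-u)\dG$. In the case $K\in(0,\infty)$, this combines with the quadratic coupling contribution through $K\deln u=\alpha v-u$ to yield $\alpha\int_\Ga\deln u\,(v'-v)\dG$; in the case $K=0$, the same identity holds via the trace relation $u'-u=\alpha(v'-v)$ enforced by $\mathcal{H}^1_{0,\alpha}$. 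The resulting subgradient inequality matches exactly $\mathcal{A}(u,v)$ tested against $(u',v')-(u,v)$; monotonicity of $\mathcal{A}$ follows from the same calculation applied to a difference of two elements of $D(\mathcal{A})$.

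For (b), given $(f,g)\in\mathcal{L}^2$ I take the unique minimizer $(u^\ast,v^\ast)$ of the strictly convex, coercive, lower semi-continuous functional $\mathcal{J}(u,v)=\tfrac12\|(u,v)-(f,g)\|_{\mathcal{L}^2}^2+\widetilde{E}_K(u,v)$, which satisfies $(f-u^\ast,g-v^\ast)\in\partial\widetilde{E}_K(u^\ast,v^\ast)$ but a priori only lies in $\mathcal{H}^1_{K,\alpha}$ with $|u^\ast|,|v^\ast|\leq 1$. To upgrade the regularity and identify this element of the subdifferential with $\mathcal{A}(u^\ast,v^\ast)$, I would replace $F_1,G_1$ by smooth convex approximations $F_{1,\lambda},G_{1,\lambda}$ of bounded growth (e.g.\ Moreau--Yosida regularizations) and minimize the corresponding regularized functional. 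The unique minimizers $(u_\lambda,v_\lambda)$ satisfy the regular semilinear bulk-surface system with Robin-type coupling, and classical elliptic regularity applied iteratively to the two components yields $(u_\lambda,v_\lambda)\in\mathcal{H}^2$.

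The main obstacle is to derive $\lambda$-uniform $\mathcal{L}^2$-bounds on $(F_{1,\lambda}'(u_\lambda),G_{1,\lambda}'(v_\lambda))$, which are needed both to establish strict separation $|u^\ast|,|v^\ast|<1$ a.e.\ in the limit and to pass to the limit in the full system. I would test the bulk equation by $F_{1,\lambda}'(u_\lambda)$ and the surface equation by $G_{1,\lambda}'(v_\lambda)$: convexity of $F_{1,\lambda},G_{1,\lambda}$ disposes of the Laplacian contributions with the correct sign, while the coupling boundary term $\int_\Ga\deln u_\lambda\,F_{1,\lambda}'(u_\lambda)\dG$ is first rewritten through the boundary condition as an expression involving $F_{1,\lambda}'(\alpha v_\lambda)$, and then absorbed into the surface $\|G_{1,\lambda}'(v_\lambda)\|_{L^2(\Ga)}^2$-term via the domination assumption \eqref{Domination:Prime}. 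This necessitates a careful choice of approximation (e.g.\ smooth truncation of $F_1',G_1'$ near $\pm 1$) which preserves the structural inequality $|F_{1,\lambda}'(\alpha s)|\leq\kappa_1|G_{1,\lambda}'(s)|+\kappa_2$. Once these uniform bounds are established, strong $\mathcal{H}^1$-compactness of $(u_\lambda,v_\lambda)$, weak $\mathcal{L}^2$-compactness of $(F_{1,\lambda}'(u_\lambda),G_{1,\lambda}'(v_\lambda))$ together with maximal monotonicity arguments to identify their limits with $F_1'(u^\ast),G_1'(v^\ast)$, and a posteriori $H^2$-regularity for the limiting coupled problem place $(u^\ast,v^\ast)$ in $D(\mathcal{A})$ with $\mathcal{A}(u^\ast,v^\ast)=(f-u^\ast,g-v^\ast)$. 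This completes the range condition and, together with step (a), the characterization of $\partial\widetilde{E}_K$.
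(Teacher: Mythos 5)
Your overall strategy coincides with the paper's: establish the inclusion $\mathcal{A}\subset\partial\widetilde{E}_K$ by convexity and integration by parts, and upgrade it to equality through the range condition $R(I+\mathcal{A})=\mathcal{L}^2$, solved via Moreau--Yosida regularization, $\lambda$-uniform $\mathcal{L}^2$-bounds on the regularized nonlinearities obtained by testing with $(F_{1,\lambda}'(u_\lambda),G_{1,\lambda}'(v_\lambda))$ and exploiting the domination property \eqref{Domination:Prime}, bulk-surface elliptic regularity, and identification of the weak limits by maximal monotonicity. The only structural deviation --- constructing the approximate solutions as minimizers of the regularized functional instead of the paper's contraction-mapping argument for the resolvent system --- is harmless, since strong monotonicity makes the limiting resolvent equation uniquely solvable (so you do not even need to identify the limit with your minimizer $(u^\ast,v^\ast)$ of $\mathcal{J}$); and the structural inequality you worry about preserving is exactly \eqref{DOMINATION:YOSIDA}, which holds for the Moreau--Yosida regularizations.

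There is, however, one concrete gap in the case $K=0$. For $K\in(0,\infty)$ your absorption works because the boundary multiplier is $K^{-1}(\alpha v_\lambda-u_\lambda)$, which is uniformly bounded in $L^2(\Ga)$ by the energy estimate. For $K=0$, after rewriting $F_{1,\lambda}'(u_\lambda)=F_{1,\lambda}'(\alpha v_\lambda)$ on $\Ga$ and invoking domination, the coupling term is of the form $\int_\Ga \abs{\deln u_\lambda}\,\big(C\abs{G_{1,\lambda}'(v_\lambda)}+C\big)\dG$, and absorbing it into $\norm{G_{1,\lambda}'(v_\lambda)}_{L^2(\Ga)}^2$ requires a $\lambda$-uniform bound on $\norm{\deln u_\lambda}_{L^2(\Ga)}$, which neither the $\mathcal{H}^1$ estimate nor your sketch supplies. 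The paper closes this loop by a bootstrap: the elliptic estimate gives $\norm{(u_\lambda,v_\lambda)}_{\mathcal{H}^2}\le C\big(1+\norm{(f,g)}_{\mathcal{L}^2}+\norm{\deln u_\lambda}_{L^2(\Ga)}\big)$, while the trace bound $\norm{\deln u_\lambda}_{L^2(\Ga)}\le C\norm{u_\lambda}_{H^{7/4}(\Om)}$ combined with the compact embeddings $H^2(\Om)\emb H^{7/4}(\Om)\emb L^2(\Om)$ and Ehrling's lemma yields $\norm{\deln u_\lambda}_{L^2(\Ga)}\le \varepsilon\norm{(u_\lambda,v_\lambda)}_{\mathcal{H}^2}+C_\varepsilon\norm{(f,g)}_{\mathcal{L}^2}$, so the $\mathcal{H}^2$-norm can be absorbed (cf.\ \eqref{EST:FG:lambda:0}--\eqref{EST:UNI}). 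With this interpolation step added, your plan is complete and matches the paper's proof.
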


\medskip

Before proving this proposition, we briefly recall the Moreau--Yosida regularization, which will be applied on the singular parts $F_1$ and $G_1$, respectively. For any $\lambda\in (0,\infty)$, we define $F_{1,\lambda}, G_{1,\lambda}:\R\rightarrow[0,\infty)$ as
\begin{align}\label{APX:MY}
    F_{1,\lambda}(r) = \inf_{s\in\R}\left\{\frac{1}{2\lambda}\abs{r-s}^2 + F_1(s)\right\}, \qquad G_{1,\lambda}(r) = \inf_{s\in\R}\left\{\frac{1}{2\lambda}\abs{r-s}^2 + G_1(s)\right\}
\end{align}
for all $r\in\R$, respectively. Then, the derivatives are given as
\begin{align}\label{APX:Y}
    F_{1,\lambda}^\prime(r) = \frac1\lambda\Big( r - (I + \lambda F_1^\prime)^{-1}(r)\Big), \qquad G_{1,\lambda}^\prime(r) = \frac1\lambda\Big(r - (I + \lambda G_1^\prime)^{-1}(r)\Big)
\end{align}
for all $r\in\R$, respectively. 
For any $\lambda>0$, $F_\lambda$ has the following important properties:
\begin{enumerate}[label=\textbf{(M\arabic*)},topsep=0ex,leftmargin=*]
    \item \label{Yosida:Reg} For every $\lambda > 0$, $F_{1,\lambda}\in C^{2,1}_{\textup{loc}}(\R)$ with $F_{1,\lambda}(0) = F_{1,\lambda}^\prime(0) = 0$;
    \item \label{Yosida:Convexity}  $F_{1,\lambda}$ is convex with
    \begin{align*}
        F_{1,\lambda}^{\prime\prime}(r) \geq \frac{\Theta_\Om}{1 + \Theta_\Om} \quad\text{for all~}r\in\R;
    \end{align*}
    \item \label{Yosida:Lipschitz}  For every $\lambda > 0$, $F_{1,\lambda}^\prime$ is Lipschitz continuous on $\R$ with constant $\frac1\lambda$.
    \item \label{Yosida:Growth} There exists $\overline\lambda\in(0,1)$ and $C > 0$ such that
    \begin{align*}
        F_{1,\lambda}(r) \geq \frac{1}{4\overline\lambda}r^2 - C \qquad\text{for all~}r\in\R\text{~and~}\lambda\in(0,\overline\lambda);
    \end{align*}
    \item \label{Yosida:Convergence}  
    For every $\lambda > 0$, it holds $F_{1,\lambda}(r)\le  F_1(r)$ for all $r\in[-1,1]$.
    Moreover, as $\lambda \rightarrow 0$, we have $F_{1,\lambda}(r)\rightarrow F_1(r)$ for all $r\in[-1,1]$ and $\vert F_{1,\lambda}^\prime(r)\vert \rightarrow \abs{F_1^\prime(r)}$ for $r\in(-1,1)$.
\end{enumerate}
Analogous properties hold true for $G_{1,\lambda}$ instead of $F_{1,\lambda}$.
Additionally, as a consequence of the domination property \eqref{Domination:Prime}, 
we obtain an analogous estimate for the corresponding Yosida regularizations.
Namely, for any $\lambda>0$, it holds that
\begin{align}
    \label{DOMINATION:YOSIDA}
    \abs{F_{1,\lambda}^\prime(\alpha r)} \leq \kappa_1\abs{G_{1,\lambda}^\prime(r)} + \kappa_2 \qquad\text{for all~}r\in\R
\end{align}
(see, e.g., \cite[Lemma 4.4]{Calatroni2013}), where $\kappa_1, \kappa_2$ are the same constants as in \eqref{Domination:Prime}.

\medskip

We now proceed with the proof of the above proposition.

\begin{proof}[Proof of Proposition~\ref{App:Proposition:Subdiff}]
We follow the general scheme as in \cite[Proposition~2.9]{Barbu2010} and consider the operator $\mathcal{A}:\mathcal{L}^2\rightarrow\mathcal{L}^2$ that is defined as
\begin{align*}
    \mathcal{A}(u,v) &= (-\Lap u + F_1^\prime(u), -\Lapg v + G_1^\prime(v) + \alpha\deln u) \quad\text{for all~} (u,v)\in D(\mathcal{A}), 
\end{align*} 
where
\begin{align*}   
    D(\mathcal{A}) &= \{(u,v)\in\mathcal{H}^2: (F_1^\prime(u), G_1^\prime(v))\in\mathcal{L}^2, \ K\deln u = \alpha v - u \ \text{on~}\Ga\}.
\end{align*}

We first show $\mathcal{A}\subset \partial \widetilde{E}_K$. Recalling the convexity properties of $F_1$ and $G_1$ (especially \eqref{Assumption:Pot:Convexity}) and using integration by parts, a straightforward computation yields \pagebreak[3]
\begin{align*} 
    &\big(\mathcal{A}(u,v),(u,v) - (\zeta, \xi)\big)_{\mathcal{L}^2} 
    \nonumber\\[0.5ex]
    &= \intO (-\Lap u + F_1^\prime(u))(u - \zeta) + \intG (-\Lapg v + G_1^\prime(v) + \alpha\deln u)(v - \xi)\dG 
    \nonumber\\[0.5ex]
    &= \intO \Grad u\cdot(\Grad u - \Grad\zeta) + F_1^\prime(u)(u - \zeta)\dx  
    +  \sigma(K)\intG (\alpha v - u)\big((\alpha v - u) - (\alpha\xi - \zeta)\big)\dG 
    \nonumber\\
    &\quad + \intG \Gradg v \cdot(\Gradg v - \Gradg\xi) + G_1^\prime(v)(v - \xi)\dG
    \\[0.5ex]
    &\geq \intO \frac12\abs{\Grad u}^2 + F_1(u) \dx + \intG \frac12\abs{\Gradg v}^2 + G_1(v) \dG + \sigma(K) \intG \frac12 (\alpha v - u)^2\dG
    \nonumber\\
    &\quad - \intO \frac12\abs{\Grad\zeta}^2 + F_1(\zeta)\dx - \intG \frac12\abs{\Gradg\xi}^2 + G_1(\xi)\dG - \sigma(K) \intG \frac12 (\alpha\xi - \zeta)^2\dG
    \nonumber\\[0.5ex]
    &= \widetilde{E}_K(u,v) 
    - \widetilde{E}_K(\zeta, \xi) 
    \nonumber
\end{align*}
for all $(u,v)\in D(\mathcal{A}), (\zeta, \xi)\in\mathcal{H}^1_{K,\alpha}$. 
This directly implies $\mathcal{A}(u,v) \in \partial\widetilde{E}_K$ for all $(u,v) \in D(\mathcal{A})$, which means that $\mathcal{A}\subset \partial \widetilde{E}_K$.

In order to show that $\mathcal{A} = \partial \widetilde{E}_K$, it suffices to prove that $\mathcal{A}$ is a maximal monotone operator in $\mathcal{L}^2$, i.e., $R(I + \mathcal{A}) = \mathcal{L}^2$. 
To this end, we fix $(f,g)\in\mathcal{L}^2$ and consider the following system:
\begin{subequations}
\label{EQ:I+A}
\begin{alignat}{2}
    \label{EQ:I+A:1}
    u - \Lap u + F_1^\prime(u) &= f &&\qquad\text{in~}\Om, 
    \\
    \label{EQ:I+A:2}
    v - \Lapg v + G_1^\prime(v) + \alpha\deln u &= g &&\qquad\text{on~}\Ga, 
    \\
    \label{EQ:I+A:3}
    K\deln u &= \alpha v - u &&\qquad\text{on~}\Ga. 
\end{alignat}
\end{subequations}
We approximate \eqref{EQ:I+A} by
\begin{subequations}
\label{EQ:I+A:lambda}
\begin{alignat}{2}
    \label{EQ:I+A:lambda:1}
    u - \Lap u + F_\lambda^\prime(u) &= f &&\qquad\text{in~}\Om, 
    \\
    \label{EQ:I+A:lambda:2}
    v - \Lapg v + G_\lambda^\prime(v) + \alpha\deln u &= g &&\qquad\text{on~}\Ga, 
    \\
    \label{EQ:I+A:lambda:3}
    K\deln u &= \alpha v - u &&\qquad\text{on~}\Ga 
\end{alignat}
\end{subequations}
for any $\lambda>0$, where $F_{1,\lambda}$ and $G_{1,\lambda}$ are the Moreau--Yosida regularizations of the potentials $F_1$ and $G_1$ that were introduced above.
We start by showing that, for any $\lambda>0$, System~\eqref{EQ:I+A:lambda} has a unique solution $(u_\lambda, v_\lambda)\in\mathcal{H}^2$. For this purpose, we fix an arbitrary $\lambda>0$ and consider the operator $T_\lambda:\mathcal{L}^2\rightarrow\mathcal{L}^2$, $(u,v)\mapsto (\overline{u}, \overline{v})$, where $(\overline{u}, \overline{v})\in\mathcal{H}^1_{K,\alpha}$ is the unique weak solution of
\begin{subequations}
\label{EQ:I+A:lambda:AUX}
\begin{alignat}{2}
    \label{EQ:I+A:lambda:AUX:1}
    \lambda\overline{u} - \lambda\Lap\overline{u} + \overline{u} &= \lambda f + (1 + \lambda F_1^\prime)^{-1}(u) 
    &&\qquad\text{in~}\Om, 
    \\
    \label{EQ:I+A:lambda:AUX:2}
    \lambda\overline{v} - \lambda\Lapg\overline{v} + \lambda\alpha\deln\overline{u} + \overline{v} &= \lambda g + (1 + \lambda G_1^\prime)^{-1}(v) 
    &&\qquad\text{on~}\Ga, 
    \\
    \label{EQ:I+A:lambda:AUX:3}
    K\deln\overline{u} &= \alpha\overline{v} - \overline{u} &&\qquad\text{on~}\Ga. 
\end{alignat}
\end{subequations}
Note that the existence of this unique weak solution $(\overline{u}, \overline{v})$ follows from the Lax--Milgram lemma (see also \cite{Knopf2021}). 
Let now $(u_1, v_1), (u_2, v_2) \in\mathcal{L}^2$ be arbitrary, and let $(\overline{u}_1, \overline{v}_1)$ and $(\overline{u}_2, \overline{v}_2)$ denote the corresponding solutions of System~\eqref{EQ:I+A:lambda:AUX}, respectively.
Using integration by parts, we readily infer that
\begin{align*}
    \begin{split}
        &\norm{(\overline{u}_1,\overline{v}_1) - (\overline{u}_2,\overline{v}_2)}_{\mathcal{H}^1}^2 + \norm{(\alpha\overline{v}_1 - \overline{u}_1) - (\alpha\overline{v}_2 - \overline{u}_2)}_{L^2(\Ga)}^2 + \frac1\lambda \norm{(\overline{u}_1,\overline{v}_1) - (\overline{u}_2,\overline{v}_2)}_{\mathcal{L}^2}^2 \\
        &= \frac1\lambda\intO \big((1+\lambda F_1^\prime)^{-1}(u_1) - (1+\lambda F_1^\prime)^{-1}(u_2)\big)(\overline{u}_1 - \overline{u}_2)\dx \\
        &\qquad + \frac1\lambda\intG \big((1+\lambda G_1^\prime)^{-1})(v_1) - (1+\lambda G_1^\prime)^{-1}(v_2)\big)(\overline{v}_1 - \overline{v}_2)\dG.
    \end{split}
\end{align*}
Invoking the estimate
\begin{align*}
    \abs{(1 + \lambda F_1^\prime)^{-1}(x) - (1 + \lambda F_1^\prime)^{-1}(y)} \leq \abs{x - y} \quad\text{for all~}x,y\in\R, 
\end{align*}
we find 
\begin{align*}
    \norm{T_\lambda(u_1,v_1) - T_\lambda(u_2,v_2)}_{\mathcal{H}^1}^2 + \frac{1}{\lambda} \norm{T_\lambda(u_1,v_1) - T_\lambda(u_2,v_2)}_{\mathcal{L}^2}^2 \leq \frac1\lambda \norm{(u_1,v_1) - (u_2, v_2)}_{\mathcal{L}^2}^2.
\end{align*}
Consequently, we have
\begin{align*}
    \norm{T_\lambda(u_1,v_1) - T_\lambda(u_2,v_2)}_{\mathcal{L}^2} \leq \frac{1}{\sqrt{1+\lambda}} \norm{(u_1,v_1) - (u_2,v_2)}_{\mathcal{L}^2}.
\end{align*}
This means that the map $T_\lambda$ is a contraction. Therefore, applying the Banach fixed-point theorem, we conclude the existence of a pair $(u,v)\in\mathcal{L}^2$ such that $T_\lambda(u,v) = (u,v)$. 
Recalling \eqref{APX:Y}, this proves that System~\eqref{EQ:I+A:lambda} has a unique weak solution $(u_\lambda, v_\lambda)\in\mathcal{H}^1$, which means that $(u_\lambda, v_\lambda)$ satisfies
\begin{align}\label{WF:I+A:lambda}
    \begin{split}
        &\intO u_\lambda\zeta + \Grad u_\lambda\cdot\Grad\zeta + F_{1,\lambda}^\prime(u_\lambda)\zeta\dx + \intG v_\lambda\xi + \Gradg v_\lambda\cdot\Gradg\xi + G_{1,\lambda}^\prime(v_\lambda)\xi\dG  \\
        & \quad + \sigma(K) \intG (\alpha v_\lambda - u_\lambda)(\alpha\xi - \zeta)\dG = \intO f\zeta \dx + \intG g \xi \dG
    \end{split}
\end{align}
for all $(\zeta,\xi)\in\mathcal{H}^1_{K,\alpha}$. Now, testing \eqref{WF:I+A:lambda} with $(u_\lambda, v_\lambda)$ yields
\begin{align*}
    &\intO \abs{u_\lambda}^2 + \abs{\Grad u_\lambda}^2 + F^\prime_{1,\lambda}(u_\lambda)u_\lambda \dx + \intG \abs{v_\lambda}^2 + \abs{\Gradg v_\lambda}^2 + G^\prime_{1,\lambda}(v_\lambda)v_\lambda\dG \\
    &
    \quad + \sigma(K) \intG (\alpha v_\lambda - u_\lambda)^2 \dG 
    = \intO fu_\lambda\dx + \intG gv_\lambda\dG.
\end{align*}
Since $F^\prime_{1,\lambda}(s)s \geq F_{1,\lambda}(s)$ as well as $G^\prime_{1,\lambda}(s)s \geq G_{1,\lambda}(s)$ for all $s\in\R$, we infer
\begin{align}
\label{EST:H1:lambda}
    \begin{split}
        \norm{(u_\lambda,v_\lambda)}_{\mathcal{H}^1} + \norm{(F_{1,\lambda}(u_\lambda),G_{1,\lambda}(v_\lambda))}_{\mathcal{L}^1} + \sigma(K)\norm{\alpha v_\lambda - u_\lambda}_{L^2(\Ga)} \leq \norm{(f,g)}_{\mathcal{L}^2}.
    \end{split}
\end{align}
In the following, the letter $C$ will denote generic positive constants that are independent of $\lambda$.
Our next goal is to show that $(u_\lambda, v_\lambda)$ belongs to $\mathcal{H}^2$ and fulfills the estimate
\begin{align}
    \label{EST:UNI:LAMBDA}
    \norm{(u_\lambda, v_\lambda)}_{\mathcal{H}^2} + \norm{(F_{1,\lambda}^\prime(u_\lambda), G_{1,\lambda}^\prime(v_\lambda))}_{\mathcal{L}^2} \leq C\big(1 + \norm{(f,g)}_{\mathcal{L}^2}\big), 
\end{align}
which is uniform with respect to $\lambda$.
For this purpose, we will consider the cases $K\in(0,\infty)$ and $K=0$ separately.

\textit{The case $K\in(0,\infty)$.} In this case, we may use $(F^\prime_{1,\lambda}(u_\lambda), G^\prime_{1,\lambda}(v_\lambda))$ as a test function in \eqref{WF:I+A:lambda}. We obtain
\begin{align*}
    \begin{split}
        &\norm{F^\prime_{1,\lambda}(u_\lambda)}_{L^2(\Om)}^2 + \norm{G^\prime_{1,\lambda}(v_\lambda)}_{L^2(\Ga)}^2 + \intO F^\prime_{1,\lambda}(u_\lambda)u_\lambda\dx + \intG G^\prime_{1,\lambda}(v_\lambda)v_\lambda\dG \\
        &\quad + \intO F^{\prime\prime}_{1,\lambda}(u_\lambda)\abs{\Grad u_\lambda}^2 \dx + \intG G^{\prime\prime}_{1,\lambda}(v_\lambda)\abs{\Gradg v_\lambda}^2\dG + \frac1K \intG (\alpha v_\lambda - u_\lambda)(\alpha G^\prime_{1,\lambda}(v_\lambda) - F^\prime_{1,\lambda}(u_\lambda))\dG \\
        &= \intO f F^\prime_{1,\lambda}(u_\lambda)\dx + \intG g G^\prime_{1,\lambda}(v_\lambda)\dG.
    \end{split}
\end{align*}
Since $F^{\prime\prime}_{1,\lambda}$ and $G^{\prime\prime}_{1,\lambda}$ are non-negative, we can argue similarly as above to infer
\begin{align}\label{EST:prime:lambda}
    \frac34\norm{(F^\prime_{1,\lambda}(u_\lambda), G^\prime_{1,\lambda}(v_\lambda))}_{\mathcal{L}^2} \leq \norm{(f,g)}_{\mathcal{L}^2} - \frac1K\intG (\alpha v_\lambda - u_\lambda)(\alpha G^\prime_{1,\lambda}(v_\lambda) - F^\prime_{1,\lambda}(u_\lambda))\dG.
\end{align}
For the last term on the right-hand side, we use the monotonicity of $F^\prime_{1,\lambda}$ as well as the domination property \eqref{DOMINATION:YOSIDA} to deduce that
\begin{align*}
    & - \frac1K\intG (\alpha v_\lambda - u_\lambda)(\alpha G^\prime_{1,\lambda}(v_\lambda) - F^\prime_{1,\lambda}(u_\lambda))\dG \\
    &= - \frac1K\intG (\alpha v_\lambda - u_\lambda)(F^\prime_{1,\lambda}(\alpha v_\lambda) - F^\prime_{1,\lambda}(u_\lambda))\dG - \frac1K\intG (\alpha v_\lambda - u_\lambda)(\alpha G^\prime_{1,\lambda}(v_\lambda) - F^\prime_{1,\lambda}(\alpha v_\lambda))\dG \\
    &\leq - \frac1K\intG (\alpha v_\lambda - u_\lambda)(\alpha G^\prime_{1,\lambda}(v_\lambda) - F^\prime_{1,\lambda}(\alpha v_\lambda))\dG \\
    &\leq \frac14\norm{G^\prime_{1,\lambda}(v_\lambda)}_{L^2(\Ga)}^2 + C(\norm{u_\lambda}_{H^1(\Om)}^2 + \norm{v_\lambda}_{L^2(\Ga)}^2).
\end{align*}
Using this inequality to estimate the right-hand side of \eqref{EST:prime:lambda}, we conclude that
\begin{align*}
    \norm{(F^\prime_{1,\lambda}(u_\lambda), G^\prime_{1,\lambda}(v_\lambda))}_{\mathcal{L}^2} \leq C.
\end{align*}
Finally, by regularity theory for elliptic systems with bulk-surface coupling (see \cite[Theorem 3.3]{Knopf2021}), we infer
\begin{align*}
    \norm{(u_\lambda,v_\lambda)}_{\mathcal{H}^2} \leq C\big(1 + \norm{(f - u_\lambda - F^\prime_{1,\lambda}(u_\lambda), g - v_\lambda - G^\prime_{1,\lambda}(v_\lambda))}_{\mathcal{L}^2}\big) \leq C\big(1 + \norm{(f,g)}_{\mathcal{L}^2}\big).
\end{align*}

\textit{The case $K=0$.} In this case, we have to argue a bit differently, since we cannot use $ (\zeta, \xi)= (F_{1,\lambda}^\prime(u_\lambda), G_{1,\lambda}^\prime(v_\lambda))$ in \eqref{WF:I+A:lambda} as a test function as it does not satisfy the trace relation. 
On the other hand, by applying elliptic systems with bulk-surface coupling (see \cite[Theorem 3.3]{Knopf2021}), we infer that $(u_\lambda,v_\lambda) \in \mathcal{H}^2$ and therefore, the equations \eqref{EQ:I+A:lambda} are satisfied almost everywhere in $\Om$ and on $\Ga$, respectively.
This allows us to multiply \eqref{EQ:I+A:lambda:1} by $F_{1,\lambda}^\prime(u_\lambda)$, and \eqref{EQ:I+A:lambda:2} by $G_{1,\lambda}^\prime(v_\lambda)$, and integrate over $\Omega$ and $\Gamma$, respectively. Via integration by parts, we then deduce that
\begin{align}\label{EST:prime:lambda:0}
    \begin{split}
        &\norm{F^\prime_{1,\lambda}(u_\lambda)}_{L^2(\Om)}^2 + \norm{G^\prime_{1,\lambda}(v_\lambda)}_{L^2(\Ga)}^2 + \intO F^\prime_{1,\lambda}(u_\lambda)u_\lambda\dx + \intG G^\prime_{1,\lambda}(v_\lambda)v_\lambda\dG \\
        &\quad + \intO F^{\prime\prime}_{1,\lambda}(u_\lambda)\abs{\Grad u_\lambda}^2 \dx + \intG G^{\prime\prime}_{1,\lambda}(v_\lambda)\abs{\Gradg v_\lambda}^2\dG + \intG \alpha\deln u_\lambda(\alpha G^\prime_{1,\lambda}(v_\lambda) - F^\prime_{1,\lambda}(u_\lambda))\dG \\
        &= \intO f F^\prime_{1,\lambda}(u_\lambda)\dx + \intG g G^\prime_{1,\lambda}(v_\lambda)\dG.
    \end{split}
\end{align}
Using the trace relation $u_\lambda = \alpha v_\lambda$ on $\Ga$ and the domination property \eqref{DOMINATION:YOSIDA}, we argue analogously as above to obtain
\begin{align}\label{EST:FG:lambda:0}
    \norm{(F_{1,\lambda}^\prime(u_\lambda), G_{1,\lambda}^\prime(v_\lambda))}_{\mathcal{L}^2} \leq C\big( 1 + \norm{\deln u_\lambda}_{L^2(\Ga)}\big).
\end{align}
Thus, using the regularity estimate for the bulk-surface elliptic problem (see \cite[Theorem 3.3]{Knopf2021}), we infer that
\begin{align}\label{EST:H^2+deln:lambda}
    \norm{(u_\lambda, v_\lambda)}_{\mathcal{H}^2} \leq C\big(1 + \norm{(f,g)}_{\mathcal{L}^2} + \norm{\deln u_\lambda}_{L^2(\Ga)}\big).
\end{align}
In order to close the estimate, we are left to control the term $\norm{\deln u_\lambda}_{L^2(\Ga)}$. Using the trace theorem (see, e.g., \cite[Chapter 2, Theorem 2.24]{Brezzi1987}), we deduce that
\begin{align*}
    \norm{\deln u_{\lambda}}_{L^2(\Ga)} \leq C\norm{u_\lambda}_{H^{7/4}(\Om)}.
\end{align*}
Thus, using \eqref{EST:H1:lambda} as well as the compact embeddings $H^2(\Om)\emb H^{7/4}(\Om) \emb L^2(\Om)$ along with Ehrling's lemma, we infer that for any $\varepsilon>0$, it holds
\begin{align}\label{EST:deln}
    \norm{\deln u_\lambda}_{L^2(\Ga)} \leq  \varepsilon  \norm{(u_\lambda, v_\lambda)}_{\mathcal{H}^2} + C_\varepsilon
    \norm{(f,g)}_{\mathcal{L}^2}.
\end{align}
for some constant $C_\varepsilon>0$ that may depend on $\varepsilon$.
Eventually, recalling \eqref{EST:FG:lambda:0} and choosing a suitable value of $\varepsilon$, we deduce from \eqref{EST:H^2+deln:lambda}-\eqref{EST:deln} that
\begin{align}
    \label{EST:UNI}
    \norm{(u_\lambda, v_\lambda)}_{\mathcal{H}^2} + \norm{(F_{1,\lambda}^\prime(u_\lambda), G_{1,\lambda}^\prime(v_\lambda))}_{\mathcal{L}^2} \leq C\big(1 + \norm{(f,g)}_{\mathcal{L}^2}\big).
\end{align}
Consequently, according to the Banach--Alaoglu theorem and the Aubin--Lions--Simon lemma, there exist functions $u, v, \chi$ and $\chi_\Ga$ such that
\begin{alignat*}{3}
     \scp{u_\lambda}{v_\lambda} &\rightarrow \scp{u}{v} \qquad&&\text{weakly in~} \mathcal{H}^2 \ \text{~and strongly in~}\mathcal{H}^1_{K,\alpha}, \\
     \scp{F^\prime_{1,\lambda}(u_\lambda)}{G^\prime_{1,\lambda}(v_\lambda)} &\rightarrow \scp{\chi}{\chi_\Ga} &&\text{weakly in~} \mathcal{L}^2. 
\end{alignat*}
From these convergences, we readily infer that $(u,v)$ satisfies
\begin{alignat}{2}\label{EQ:I+A:chi}
    u - \Lap u + \chi &= f &&\qquad\text{a.e.~in~}\Om, \nonumber \\
    v - \Lapg v + \chi_\Ga + \alpha\deln u &= g &&\qquad\text{a.e.~on~}\Ga, \\
    K\deln u &= \alpha v - u &&\qquad\text{a.e.~on~}\Ga. \nonumber
\end{alignat}
Similarly as in the proof of Theorem~\ref{THEOREM:EOWS}, the identities $F_1^\prime(u) = \chi$ and $G_1^\prime(v) = \chi_\Ga$ follow by means of the theory of maximal of monotone operators (see, e.g., \cite[Proposition 1.1, p.42]{Barbu2010} and also\cite[Section 5.2]{Garcke2017}) together with the convergences
\begin{align*}
    \lim_{\lambda\rightarrow 0}\intO F_{1,\lambda}^\prime(u_\lambda)u_\lambda\dx &= \intO \chi u\dx, \\
    \lim_{\lambda\rightarrow 0}\intG G_{1,\lambda}^\prime(v_\lambda)v_\lambda\dG &= \intG \chi_\Ga v\dG.
\end{align*}
In addition, this shows that $(F_{1,\lambda}^\prime(u_\lambda), G_{1,\lambda}^\prime(v_\lambda)) \in \mathcal{L}^2$ and from \eqref{EST:UNI}, we conclude that
\begin{align*}
    \norm{(u, v)}_{\mathcal{H}^2} + \norm{(F_1^\prime(u), G_1^\prime(v))}_{\mathcal{L}^2} 
    \leq C\big(1 + \norm{(f,g)}_{\mathcal{L}^2}\big)
\end{align*}
by means of weak lower semicontinuity.

We have thus shown that $(u,v)$ is a solution of \eqref{EQ:I+A}, and because $(f,g)\in\mathcal{L}^2$ was arbitrary, we infer that $\mathcal{A} = \partial \widetilde{E}_K$. Hence, the proof is complete.
\end{proof}

\section{A bulk-surface elliptic system with singular nonlinearities}
\label{SECT:BSE}

In this section, we establish well-posedness and regularity results for a bulk-surface elliptic system with singular nonlinearities. These regularity results will be the key ingredient in the proofs of Theorem~\ref{THM:TIMEREG} and Theorem~\ref{Theorem:HighReg}. The elliptic system reads as follows:
\begin{subequations}
\label{BSE}
    \begin{align}
        \label{BSE:1}
        -\Lap u + F_1^\prime(u) &= f &\text{in $\Omega$},\\
        \label{BSE:2}
        -\Lapg v + G_1^\prime(v) + \alpha \deln u &= g &\text{on $\Gamma$},\\
        \label{BSE:3}
        K\deln u &= \alpha v - u 
        &\text{on $\Gamma$}.
    \end{align}        
\end{subequations}
Here and in the remainder of this section, we consider $K\in[0,\infty)$, and $F_1$ and $G_1$ are assumed to satisfy the assumptions \eqref{Assumption:Pot:Sing}-\eqref{Domination:Prime}. We point out that results similar to those in this section can also be obtained for regular potentials $F_1,G_1:\R\to \R$ that have suitable polynomial growth (see also Remark~\ref{REM:YOS}).

We now present a well-posedness result for \eqref{BSE} along with additional regularity results in the framework of Sobolev spaces. By means of the theory developed in Section~\ref{SECT:SUBDIFF}, we immediately infer the existence of a solution to System~\eqref{BSE}. This is stated by the following proposition.

\begin{proposition}\label{App:Prop:Existence}
    Let $(f,g)\in\mathcal{L}^2$.
    Then, there exists a solution $(u,v)\in\mathcal{H}^2\cap\mathcal{H}^1_{K,\alpha}$ of System~\eqref{BSE} such that 
    $(F_1^\prime(u), G_1^\prime(v))\in\mathcal{L}^2$ and the equations \eqref{BSE:1}-\eqref{BSE:3} are satisfied almost everywhere. 
    In particular, it holds $\abs{u}< 1$ a.e.~in $\Omega$ and $\abs{v}< 1$ a.e.~on $\Gamma$.
    Moreover, there exists a positive constant $C$ depending only on $\Om$, $K$, $\alpha$, $\Theta_\Om$, $\Theta_\Ga$, $\kappa_1$ and $\kappa_2$ such that
    \begin{align}\label{App:Est:H^2+L^2}
        \norm{(u,v)}_{\mathcal{H}^2} + \norm{(F_1^\prime(u), G_1^\prime(v))}_{\mathcal{L}^2} \leq C\big(1 + \norm{(f,g)}_{\mathcal{L}^2}\big).
    \end{align}
\end{proposition}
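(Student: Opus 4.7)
The system \eqref{BSE} is precisely the inclusion $\partial \widetilde{E}_K(u,v) \ni (f,g)$ by the characterization \eqref{EQ:DELE} in Proposition~\ref{App:Proposition:Subdiff}. Since $\partial\widetilde E_K$ is maximal monotone, my plan is to prove its surjectivity onto $\mathcal{L}^2$ by an approximation-and-limit argument that mimics the second half of the proof of Proposition~\ref{App:Proposition:Subdiff} but with the identity term scaled by a vanishing parameter. For each $\epsilon\in(0,1)$, Proposition~\ref{App:Proposition:Subdiff} (applied after dividing through by $\epsilon$, or equivalently by re-running its Yosida-fixed-point argument with coefficient $\epsilon$ in front of the identity) yields a unique solution $(u_\epsilon,v_\epsilon)\in\mathcal{H}^2\cap\mathcal{H}^1_{K,\alpha}$ with $(F_1'(u_\epsilon),G_1'(v_\epsilon))\in\mathcal{L}^2$ of
\begin{align*}
\epsilon u_\epsilon-\Lap u_\epsilon+F_1'(u_\epsilon)=f,\quad
\epsilon v_\epsilon-\Lapg v_\epsilon+G_1'(v_\epsilon)+\alpha\deln u_\epsilon=g,\quad
K\deln u_\epsilon=\alpha v_\epsilon-u_\epsilon,
\end{align*}
with $|u_\epsilon|,|v_\epsilon|\le 1$ almost everywhere in the relevant sets.

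The next step is to derive $\epsilon$-independent bounds. Testing the approximate system with $(u_\epsilon,v_\epsilon)$ and using $F_1'(r)r\ge F_1(r)\ge 0$ together with the analogous inequalities for $G_1$ and the pointwise bounds $|u_\epsilon|,|v_\epsilon|\le 1$, yields a uniform $\mathcal{H}^1_{K,\alpha}$ bound. To bound $(F_1'(u_\epsilon),G_1'(v_\epsilon))$ in $\mathcal{L}^2$, I distinguish the two cases covered in the proof of Proposition~\ref{App:Proposition:Subdiff}. If $K\in(0,\infty)$, the test function $(\zeta,\xi)=(F_1'(u_\epsilon),G_1'(v_\epsilon))$ is admissible in $\mathcal{H}^1_{K,\alpha}$, and the resulting boundary integral $\tfrac{1}{K}\int_\Gamma(\alpha v_\epsilon-u_\epsilon)(\alpha G_1'(v_\epsilon)-F_1'(u_\epsilon))$ is absorbed via the monotonicity of $F_1'$ and the domination property \eqref{Domination:Prime}. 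If $K=0$, this test function violates the trace constraint, so one multiplies the equations pointwise almost everywhere by $F_1'(u_\epsilon)$ and $G_1'(v_\epsilon)$ and handles the boundary term $\alpha\deln u_\epsilon$ using the trace-interpolation $\norm{\deln u_\epsilon}_{L^2(\Gamma)}\le C\norm{u_\epsilon}_{H^{7/4}(\Omega)}$, Ehrling's lemma, and the bulk-surface elliptic regularity estimate of \cite[Theorem~3.3]{Knopf2021}. Combining these estimates with that same elliptic regularity result yields the uniform $\mathcal{H}^2\times\mathcal{L}^2$ bound consistent with \eqref{App:Est:H^2+L^2}.

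Passing to a subsequence as $\epsilon\to 0$, the Banach--Alaoglu theorem and the Rellich--Kondrachov embedding provide $(u_\epsilon,v_\epsilon)\rightharpoonup(u,v)$ in $\mathcal{H}^2$, strongly in $\mathcal{H}^1_{K,\alpha}$ and almost everywhere, while $(F_1'(u_\epsilon),G_1'(v_\epsilon))\rightharpoonup(\chi,\chi_\Gamma)$ in $\mathcal{L}^2$. The identifications $\chi=F_1'(u)$ and $\chi_\Gamma=G_1'(v)$ follow from the maximal monotonicity of the graphs of $F_1'$ and $G_1'$ on the real line, via the convergences $\int_\Omega F_1'(u_\epsilon)u_\epsilon\to\int_\Omega\chi u$ and $\int_\Gamma G_1'(v_\epsilon)v_\epsilon\to\int_\Gamma\chi_\Gamma v$ that result from the strong $\mathcal{L}^2$ convergence of $(u_\epsilon,v_\epsilon)$ (see \cite[Proposition~1.1, p.~42]{Barbu2010}). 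Passing to the limit in the approximated equations then produces a strong solution of \eqref{BSE}, the estimate \eqref{App:Est:H^2+L^2} is preserved by weak lower semicontinuity of norms, and the strict bounds $|u|<1$ in $\Omega$ and $|v|<1$ on $\Gamma$ follow from $F_1'(u),G_1'(v)\in L^2$ together with the blow-up assumption \eqref{Assumption:Pot:Sing}. I expect the $K=0$ case of the $\mathcal{L}^2$-bound on $(F_1'(u_\epsilon),G_1'(v_\epsilon))$ to be the main technical obstacle, since this is precisely where the trace constraint obstructs the natural test function and forces the trace-plus-Ehrling closure argument.
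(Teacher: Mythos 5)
Your route is viable but genuinely different from — and much longer than — the paper's. The paper does not regularize at this stage at all: by the representation \eqref{EQ:DELE} and the convexity bounds \eqref{Assumption:Pot:Convexity}, the operator $\partial\widetilde E_K$ is strongly monotone with constant $\Theta_*=\min\{\Theta_\Om,\Theta_\Ga\}$, and a strongly monotone maximal monotone operator on $\mathcal{L}^2$ is surjective; existence is then immediate, and the estimate \eqref{App:Est:H^2+L^2} is obtained by repeating the derivation of \eqref{EST:UNI} (carried out at the Yosida level) together with bulk-surface elliptic regularity. Your scheme $\epsilon I+\partial\widetilde E_K$ followed by $\epsilon\to0$ essentially reproves this abstract surjectivity by hand. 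It does work — in fact you never need strong monotonicity, since the constraint $\abs{u_\epsilon},\abs{v_\epsilon}\le1$ built into $D(\partial\widetilde E_K)$ already gives uniform $\mathcal{L}^2$ bounds and $\epsilon(u_\epsilon,v_\epsilon)\to0$ strongly, so the limit can be identified by demiclosedness of the maximal monotone graph — but it buys nothing over the one-line argument the hypotheses are designed for.

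One step of yours needs repair. For the singular potentials you cannot take $(\zeta,\xi)=(F_1'(u_\epsilon),G_1'(v_\epsilon))$ as a test function in $\mathcal{H}^1_{K,\alpha}$, nor (in the case $K=0$) integrate by parts in $\int_\Om(-\Lap u_\epsilon)F_1'(u_\epsilon)\dx$ to produce the nonnegative term $\int_\Om F_1''(u_\epsilon)\abs{\Grad u_\epsilon}^2\dx$: one only knows $F_1'(u_\epsilon)\in L^2(\Om)$, and since $F_1''$ is unbounded near $\pm1$, membership of $F_1'(u_\epsilon)$ in $H^1(\Om)$ is not available without a separation property. This is precisely why the paper performs these tests either at the Yosida level, where $F_{1,\lambda}'$ is Lipschitz (proof of Proposition~\ref{App:Proposition:Subdiff}), or after the truncations $h_k$ (proof of Proposition~\ref{App:Proposition:Pot:L^p}). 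Your parenthetical remark about re-running the Yosida fixed-point argument with the coefficient $\epsilon$ indicates the correct fix: derive the $\mathcal{L}^2$ bound on the nonlinearities for the $(\epsilon,\lambda)$-regularized problem, uniformly in both parameters, and conclude by weak lower semicontinuity. A more economical alternative: since your $\epsilon$-problem can be rewritten as $(I+\partial\widetilde E_K)(u_\epsilon,v_\epsilon)\ni\bigl(f+(1-\epsilon)u_\epsilon,\;g+(1-\epsilon)v_\epsilon\bigr)$, and this right-hand side is uniformly bounded in $\mathcal{L}^2$ thanks to $\abs{u_\epsilon},\abs{v_\epsilon}\le1$, the bound established in the proof of Proposition~\ref{App:Proposition:Subdiff} for solutions of \eqref{EQ:I+A} (together with uniqueness, which follows from monotonicity) gives \eqref{App:Est:H^2+L^2} uniformly in $\epsilon$ without any new testing. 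The remaining ingredients of your proposal — identification of the weak limits of the nonlinearities via monotone-operator theory, the strict bounds $\abs{u}<1$ and $\abs{v}<1$ from \eqref{Assumption:Pot:Sing}, and lower semicontinuity of the estimate — are fine and match the paper's toolkit.
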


\begin{proof}
  Invoking formula \eqref{EQ:DELE} from Proposition~\ref{App:Proposition:Subdiff}, recalling \eqref{Assumption:Pot:Convexity}, and using integration by parts, it is straightforward to check that
  \begin{align*}
      (\partial \widetilde{E}_K(u, v) - \partial \widetilde{E}_K(\zeta, \xi), (u,v) - (\zeta,\xi))_{\mathcal{L}^2} \geq \Theta_* \norm{(u,v) - (\zeta,\xi)}_{\mathcal{L}^2}^2
  \end{align*}
  for all $(u, v), (\zeta, \xi)\in D(\partial \widetilde{E}_K)$, where $\Theta_* = \min\{\Theta_\Om,\Theta_\Ga\}$.
  This means that $\partial \widetilde{E}_K$ is strongly monotone and thus surjective (i.e., $R(\partial \widetilde{E}_K) = \mathcal{L}^2$). Hence, the existence of a strong solution is a direct consequence. Moreover, estimate \eqref{App:Est:H^2+L^2} can be verified by proceeding analogously to the proof of Proposition~\ref{App:Proposition:Subdiff} (especially the derivation of \eqref{EST:UNI}) together with regularity theory for elliptic systems with bulk-surface coupling (see, e.g., \cite{Knopf2021}).
\end{proof}

Next, in the following proposition, we prove a stability and uniqueness result for weak solutions of System~\eqref{BSE}. In particular, this proves that the solution of \eqref{BSE}, which exists according to Proposition~\ref{App:Prop:Existence}, is unique.

\begin{proposition}\label{App:Prop:ContDep}
    Assume that $(f_1,g_1), (f_2,g_2)\in\mathcal{L}^2$, and let $(u_1, v_1), (u_2, v_2) \in\mathcal{H}^2\cap \mathcal{H}^1_{K,\alpha}$ be corresponding solutions of System~\eqref{BSE}, respectively. Then there exists a constant $C > 0$ depending only on $\Om$, $K$, $\alpha$, $\Theta_\Om$ and $\Theta_\Ga$ such that
    \begin{align}\label{EST:CONTDEP}
        \norm{(u_1 - u_2, v_1 - v_2)}_{\mathcal{H}^1} \leq C \norm{(f_1 - f_2, g_1 - g_2)}_{\mathcal{L}^2}.
    \end{align}
    In particular, this entails that the solution of System~\eqref{BSE} is unique. 
\end{proposition}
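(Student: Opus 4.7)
The approach is a standard energy estimate that exploits the strong monotonicity of $F_1'$ and $G_1'$ implied by \eqref{Assumption:Pot:Convexity}. Setting $\bar u = u_1 - u_2$, $\bar v = v_1 - v_2$, $\bar f = f_1 - f_2$, and $\bar g = g_1 - g_2$, I would first subtract the two systems to obtain
\begin{align*}
-\Lap \bar u + F_1'(u_1) - F_1'(u_2) &= \bar f &&\text{a.e.\ in $\Om$}, \\
-\Lapg \bar v + G_1'(v_1) - G_1'(v_2) + \alpha\deln \bar u &= \bar g &&\text{a.e.\ on $\Ga$}, \\
K\deln \bar u &= \alpha \bar v - \bar u &&\text{a.e.\ on $\Ga$}.
\end{align*}

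Next, I would multiply the first equation by $\bar u$, integrate over $\Om$ and apply Green's formula; simultaneously multiply the second equation by $\bar v$ and integrate over $\Ga$. Summing the two identities, the boundary contributions from the normal derivative combine into $\int_\Ga \deln \bar u \, (\alpha \bar v - \bar u) \dG$. Using the coupling condition, this term equals $K^{-1} \int_\Ga (\alpha \bar v - \bar u)^2 \dG \geq 0$ when $K\in(0,\infty)$, and vanishes when $K=0$ since then the trace relation $\bar u = \alpha \bar v$ a.e.\ on $\Ga$ forces $\alpha \bar v - \bar u \equiv 0$.

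For the nonlinear contributions, since $|u_i| < 1$ a.e.\ in $\Om$ and $|v_i| < 1$ a.e.\ on $\Ga$ by Proposition~\ref{App:Prop:Existence}, the mean value theorem applied together with the lower bounds in \eqref{Assumption:Pot:Convexity} yields pointwise estimates
\begin{align*}
(F_1'(u_1) - F_1'(u_2))\, \bar u \geq \Theta_\Om\, \bar u^2, \qquad (G_1'(v_1) - G_1'(v_2))\, \bar v \geq \Theta_\Ga\, \bar v^2.
\end{align*}
Combining all of the above gives
\begin{align*}
\norm{\Grad \bar u}_{L^2(\Om)}^2 + \Theta_\Om \norm{\bar u}_{L^2(\Om)}^2 + \norm{\Gradg \bar v}_{L^2(\Ga)}^2 + \Theta_\Ga \norm{\bar v}_{L^2(\Ga)}^2 \leq \int_\Om \bar f\, \bar u \dx + \int_\Ga \bar g\, \bar v \dG.
\end{align*}
A routine application of Young's inequality on the right-hand side absorbs the $\bar u^2$ and $\bar v^2$ terms into the left, producing \eqref{EST:CONTDEP} with a constant depending only on $\Theta_\Om$, $\Theta_\Ga$, and (through the equivalence of $\|\cdot\|_{\mathcal H^1}$-type norms on $\mathcal H^1_{K,\alpha}$) on $\Om$, $K$, and $\alpha$. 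Uniqueness follows immediately by choosing $(f_1,g_1) = (f_2,g_2)$.

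This argument is essentially routine and presents no real obstacle; the only point that requires care is the treatment of the boundary term arising from integration by parts, but this works out cleanly and uniformly in both regimes $K>0$ and $K=0$ thanks to the structure of the coupling condition \eqref{BSE:3}.
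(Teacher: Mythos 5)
Your proposal is correct and follows essentially the same route as the paper: the paper tests the difference of the weak formulations with $(\bar u,\bar v)$, which yields exactly the identity you obtain by multiplying the strong equations and integrating by parts, and then likewise uses the convexity bounds \eqref{Assumption:Pot:Convexity} (giving $\Theta_\Om$, $\Theta_\Ga$ lower bounds on the nonlinear terms), the sign of the coupling term, and Young's inequality.
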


\begin{proof}
    For brevity, we define $(\tilde{u}, \tilde{v}) = (u_1 - u_2, v_1 - v_2)$ and $(\tilde{f}, \tilde{g}) = (f_1 - f_2, g_1 - g_2)$. Taking the difference of the corresponding weak formulations, we deduce that
    \begin{align*}
        &\intO \Grad \tilde{u}\cdot\Grad\zeta\dx + \intO (F_1^\prime(u_1) - F_1^\prime(u_2))\zeta\dx + \intG \Gradg \tilde{v}\cdot\Gradg\xi \dG + \intG (G_1^\prime(v_1) - G_1^\prime(v_2))\xi\dG \\
        &\qquad + \sigma(K) \intG (\alpha \tilde{v} - \tilde{u})(\alpha\xi - \zeta) \dG  = \intO \tilde{f}\zeta\dx + \intG \tilde{g}\xi\dG
    \end{align*}
    holds for all $(\zeta,\xi)\in\mathcal{H}^1_{K,\alpha}$. Using $(\zeta, \xi) = (\tilde{u}, \tilde{v}) \in\mathcal{H}^1_{K,\alpha}$ as a test function, and exploiting \eqref{Assumption:Pot:Convexity}, we easily obtain
    \begin{align*}
        \norm{(\tilde{u}, \tilde{v})}_{K,\alpha}^2 + \Theta_\ast\norm{(\tilde{u},\tilde{v})}_{\mathcal{L}^2}^2\leq \intO \tilde{f}\tilde{u}\dx + \intG \tilde{g}\tilde{v}\dG,
    \end{align*}
    where, as above, $\Theta_\ast = \min\{\Theta_\Om,\Theta_\Ga\}$.
    Using Hölder's and Young's inequalities, we readily infer \eqref{EST:CONTDEP}. 
\end{proof}

\medskip

Now we deal with some regularity properties, which will play an essential role in the sequel.

\begin{proposition}\label{App:Proposition:Pot:L^p}
    Let $(f,g)\in\mathcal{L}^p$ with $2\leq p \leq \infty$ and let $(u,v)\in\mathcal{H}^2\cap\mathcal{H}^1_{K,\alpha}$ be the corresponding solution of System~\eqref{BSE}. 
    Then $(u,v)\in\mathcal{W}^{2,p}$ and $(F_1^\prime(u), G_1^\prime(v)) \in \mathcal{L}^p$ and there exists a constant $C = C(p) > 0$, such that
    \begin{align}\label{App:Est:Pot:L^p}
        \norm{(u,v)}_{\mathcal{W}^{2,p}} + \norm{(F_1^\prime(u), G_1^\prime(v))}_{\mathcal{L}^p} \leq C\big( 1 + \norm{(f,g)}_{\mathcal{L}^p} 
        + \gamma(K) \norm{\deln u}_{L^p(\Ga)} \big),
    \end{align}
    where $\gamma(K) = \mathbf{1}_{\{0\}}(K)$ denotes the characteristic function of the set $\{0\}$. Moreover, if $(f,g)\in\mathcal{L}^\infty$,
    there exists $\delta\in(0,1]$, depending on $\norm{(f,g)}_{\mathcal{L}^\infty}$ and $\norm{\deln u}_{L^\infty(\Ga)}$, such that
    \begin{align}
        \label{BSE:SEPPROP}
        \abs{u(x)} \leq 1 - \delta \quad\text{for all~}x\in\overline{\Om}, \qquad 
        \abs{v(z)} \leq 1 - \delta \quad\text{for all~}z\in \Ga.
    \end{align}
\end{proposition}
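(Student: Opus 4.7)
The plan is to run the Moreau--Yosida regularization scheme from the proof of Proposition~\ref{App:Proposition:Subdiff}, adapted to the system \eqref{BSE}. For each $\lambda > 0$, I would solve the regularized problem obtained by replacing $F_1',G_1'$ with their Yosida approximations $F_{1,\lambda}',G_{1,\lambda}'$ in \eqref{BSE:1}--\eqref{BSE:2}, via the same Banach fixed-point argument as in Proposition~\ref{App:Proposition:Subdiff}, producing approximate solutions $(u_\lambda,v_\lambda)\in\mathcal{H}^2$. The strong monotonicity of $\partial\widetilde{E}_K$ (exploited in Proposition~\ref{App:Prop:Existence}) together with the continuous dependence estimate of Proposition~\ref{App:Prop:ContDep} guarantees $(u_\lambda,v_\lambda)\to(u,v)$ strongly in $\mathcal{H}^1_{K,\alpha}$, and the convergence $(F_{1,\lambda}'(u_\lambda),G_{1,\lambda}'(v_\lambda))\wto (F_1'(u),G_1'(v))$ in $\mathcal{L}^2$ follows from the theory of maximal monotone operators, exactly as in the proof of Proposition~\ref{App:Proposition:Subdiff}.

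For $p\in[2,\infty)$, the core step is to test the regularized bulk equation by $\Phi_p(F_{1,\lambda}'(u_\lambda)):=|F_{1,\lambda}'(u_\lambda)|^{p-2}F_{1,\lambda}'(u_\lambda)$ over $\Om$ and the regularized surface equation by $\Phi_p(G_{1,\lambda}'(v_\lambda))$ over $\Ga$, and sum. Integration by parts then produces on the left the coercive terms $\|F_{1,\lambda}'(u_\lambda)\|_{L^p(\Om)}^p + \|G_{1,\lambda}'(v_\lambda)\|_{L^p(\Ga)}^p$, nonnegative gradient contributions of the form $(p-1)\intO |F_{1,\lambda}'(u_\lambda)|^{p-2}F_{1,\lambda}''(u_\lambda)|\Grad u_\lambda|^2 \dx$ together with an analogous surface term (both nonnegative by the convexity of the Yosida approximations), and the boundary coupling
\begin{align*}
    \mathcal{B}_\lambda := \intG \deln u_\lambda \, \bigl[\alpha\, \Phi_p(G_{1,\lambda}'(v_\lambda)) - \Phi_p(F_{1,\lambda}'(u_\lambda))\bigr]\dG.
\end{align*}
The integrals against $f$ and $g$ on the right-hand side are handled by H\"{o}lder and Young's inequalities with a small parameter and absorbed into the coercive terms.

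The main obstacle is the treatment of $\mathcal{B}_\lambda$, which differs in structure according to $K$. When $K\in(0,\infty)$, I would use $\deln u_\lambda = (\alpha v_\lambda - u_\lambda)/K$ and split
\begin{align*}
    \Phi_p(F_{1,\lambda}'(u_\lambda)) - \alpha\Phi_p(G_{1,\lambda}'(v_\lambda)) = \bigl[\Phi_p(F_{1,\lambda}'(u_\lambda)) - \Phi_p(F_{1,\lambda}'(\alpha v_\lambda))\bigr] + \bigl[\Phi_p(F_{1,\lambda}'(\alpha v_\lambda)) - \alpha\Phi_p(G_{1,\lambda}'(v_\lambda))\bigr];
\end{align*}
the first bracket, combined with $(u_\lambda-\alpha v_\lambda)$, yields a nonnegative contribution by monotonicity of $\Phi_p\circ F_{1,\lambda}'$, and the second is controlled via the Yosida domination \eqref{DOMINATION:YOSIDA} and absorbed by Young's inequality. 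When $K=0$, the trace relation $u_\lambda = \alpha v_\lambda$ on $\Ga$ forces $F_{1,\lambda}'(u_\lambda) = F_{1,\lambda}'(\alpha v_\lambda)$, but no pointwise control on $\deln u_\lambda$ is available from the boundary condition. I would then apply H\"{o}lder's inequality combined with \eqref{DOMINATION:YOSIDA} to obtain $|\mathcal{B}_\lambda| \leq C\, \|\deln u_\lambda\|_{L^p(\Ga)}\bigl(\|G_{1,\lambda}'(v_\lambda)\|_{L^p(\Ga)}^{p-1} + 1\bigr)$, and Young's inequality (with a small parameter on the $G_{1,\lambda}'$ factor) produces precisely the $\|\deln u_\lambda\|_{L^p(\Ga)}^p$ contribution encoded by $\gamma(K)$ in \eqref{App:Est:Pot:L^p}.

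After passing $\lambda\to 0$ (invoking weak lower semi-continuity), the $L^p$-bound on $(F_1'(u),G_1'(v))$ is established. Interpreting \eqref{BSE} as a linear bulk-surface Poisson system with right-hand side $(f-F_1'(u),\,g-G_1'(v))\in\mathcal{L}^p$, the $L^p$-analogue of \cite[Theorem~3.3]{Knopf2021} then yields the $\mathcal{W}^{2,p}$-regularity of $(u,v)$ and the full estimate \eqref{App:Est:Pot:L^p}. For $p=\infty$, one can either pass to the limit $p\to\infty$ in the finite-$p$ estimates (noting that the $p$-dependence enters only through Young's inequality and factors $|\Ga|^{1/p}$) or invoke a Stampacchia/Moser-type truncation. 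The separation property \eqref{BSE:SEPPROP} then follows directly from the resulting essential boundedness of $F_1'(u)$ and $G_1'(v)$ together with the singularity \eqref{Assumption:Pot:Sing}: since $F_1',G_1'$ are continuous on $(-1,1)$ and diverge to $\pm\infty$ at $\pm 1$, their essential boundedness forces $u$ and $v$ to remain uniformly away from $\pm 1$ by a positive amount $\delta$, which depends only on $\|(f,g)\|_{\mathcal{L}^\infty}$ and, in the case $K=0$, on $\|\deln u\|_{L^\infty(\Ga)}$.
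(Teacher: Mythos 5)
Your route is genuinely different from the paper's: you regularize the equation (Moreau--Yosida) and estimate the approximations $(u_\lambda,v_\lambda)$, whereas the paper estimates the exact solution $(u,v)$ directly and regularizes only the test functions, truncating $u,v$ via $h_k$ so that $F_1^\prime(u_k)\abs{F_1^\prime(u_k)}^{p-2}$ is an admissible bounded test function, and then passes $k\to\infty$ by Fatou. This difference is not cosmetic, and it is exactly where your argument has gaps. The Yosida approximations are not confined: in general $\abs{u_\lambda}\le 1$, $\abs{v_\lambda}\le 1$ fails, so for $K\in(0,\infty)$ the factor $\alpha v_\lambda-u_\lambda$ in your boundary term is not pointwise bounded (only an $L^2(\Ga)$ bound comes for free). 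To absorb $\frac1K\intG\abs{\alpha v_\lambda-u_\lambda}\big(\abs{G_{1,\lambda}^\prime(v_\lambda)}^{p-1}+1\big)\dG$ by Young you need a $\lambda$-uniform bound on $\norm{\alpha v_\lambda-u_\lambda}_{L^p(\Ga)}$ (and, for the passage $p\to\infty$, an $r$-independent sup bound); this can be extracted from the $\lambda$-uniform $\mathcal{H}^2$ estimate \eqref{EST:UNI:LAMBDA} together with trace and Sobolev embeddings, but it is a necessary step your proposal never supplies. In the paper this issue does not arise, since the exact solution satisfies $\abs{u}\le1$, $\abs{v}\le1$, whence $\abs{\alpha v-u}\le 2$.

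In the case $K=0$ there is a second gap: your Hölder step presumes $\deln u_\lambda\in L^p(\Ga)$ for the given $p$, but the uniform $\mathcal{H}^2$ bound only yields $\deln u_\lambda\in L^4(\Ga)$ (via $H^1(\Om)\emb L^4(\Ga)$, as used in the paper). For $p>4$ one needs the bootstrap the paper performs: first establish the estimate for $p\le4$, then elliptic regularity gives $\mathcal{W}^{2,4}$, hence $\deln u\in W^{3/4,4}(\Ga)\emb L^\infty(\Ga)$, and only then is arbitrary $p$ admissible. Moreover, your $\lambda$-level estimate produces $\norm{\deln u_\lambda}_{L^p(\Ga)}$ on the right-hand side, while \eqref{App:Est:Pot:L^p} is stated in terms of $\norm{\deln u}_{L^p(\Ga)}$ for the limit solution; to conclude you must either prove strong $L^p(\Ga)$ convergence of the normal derivatives or run the bootstrap uniformly in $\lambda$ --- neither is addressed. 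The remaining ingredients (identification of the limit with $(u,v)$ by uniqueness, the monotone splitting of the boundary term, the use of \eqref{DOMINATION:YOSIDA} resp.\ \eqref{Domination:Prime}, the elliptic $\mathcal{W}^{2,p}$ step, the $r\to\infty$ argument with $r$-independent constants, and the deduction of the separation property \eqref{BSE:SEPPROP} from $\mathcal{L}^\infty$-bounds and \eqref{Assumption:Pot:Sing}) do match the paper's proof, so with the two missing steps filled in your scheme could be made to work.
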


\begin{proof}
    In this proof, we assume without loss of generality that $\alpha\neq 0$. The case $\alpha = 0$ can be handled similarly and the computations are even easier. In addition, we can assume that $p \in(2,\infty]$ as the case $p = 2$ is already covered by Proposition~\ref{App:Prop:Existence}.
    For $k\in\N$, we introduce the truncation 
    \begin{align*}
        h_k:\R\rightarrow\R,
        \quad
        h_k(s) = \begin{cases}
            -1 + \frac1k, & s < -1 + \frac1k, \\
            s, & s\in \big[-1 + \frac1k, 1 - \frac1k \big], \\
            1 - \frac1k, & s > 1 - \frac1k,
        \end{cases}
    \end{align*}
    which is globally Lipschitz continuous.
    Then, we define $u_k = \alpha h_k\circ (\alpha^{-1}u)$, $v_k = h_k \circ v$. Since $(u,v)\in\mathcal{H}^1$, a well-known result on the composition of Lipschitz and Sobolev functions entails that $(u_k,v_k)\in\mathcal{H}^1$ along with 
    \begin{align*}
        \Grad u_k = \Grad u \,\mathbf{1}_{[-1+1/k,1-1/k]}(u), \qquad \Gradg v_k = \Gradg v \,\mathbf{1}_{[-1+1/k,1-1/k]}(v)
    \end{align*}
    for any $k\in\N$.
    
    First, we consider the case $p\in(2,\infty)$. 
    To verify the assertion, we now need to handle the cases $K\in(0,\infty)$ and $K = 0$ separately. In the following, the letter $C$ always denotes a non-negative constant independent of $k$, whose value may change from line to line.

    \textit{The case $p\in(2,\infty)$ and $K\in(0,\infty)$.} 
    We multiply \eqref{BSE:1} with $F_1^\prime(u_k) \abs{F_1^\prime(u_k)}^{p-2}\in H^1(\Om)$ and integrate over $\Om$ and perform an integration by parts. Moreover, we multiply \eqref{BSE:2} with $G_1^\prime(v_k)\abs{G_1^\prime(v_k)}^{p-2}\in H^1(\Ga)$, integrate over $\Ga$ and integrate by parts. Adding the resulting equations leads to
    \begin{align}\label{App:Est:Pot:Test}
        \begin{split}
            &(p-1)\intO \Grad u \cdot\Grad u_k \abs{F_1^\prime(u_k)}^{p-2} F_1^{\prime\prime}(u_k)\dx + (p-1)\intG \Gradg v\cdot\Gradg v_k \abs{G_1^\prime(v_k)}^{p-2} G_1^{\prime\prime}(v_k)\dG \\
            &\qquad + \intO F_1^\prime(u) F_1^\prime(u_k) \abs{F_1^\prime(u_k)}^{p-2}\dx + \intG G_1^\prime(v)G_1^\prime(v_k)\abs{G_1^\prime(v_k)}^{p-2}\dG \\
            &\quad = \intO f F_1^\prime(u_k)\abs{F_1^\prime(u_k)}^{p-2}\dx + \intG gG_1^\prime(v_k)\abs{G_1^\prime(v_k)}^{p-2} \\
            &\qquad - \sigma(K) \intG (\alpha v - u)\big(\alpha G_1^\prime(v_k)\abs{G_1^\prime(v_k)}^{p-2} - F_1^\prime(u_k)\abs{F_1^\prime(u_k)}^{p-2}\big)\dG.
        \end{split}
    \end{align}
    We first notice that in view of the convexity of $F_1$ and $G_1$, the first two terms on the left-hand side of \eqref{App:Est:Pot:Test} are non-negative. Thanks to \eqref{Assumption:Pot:Convexity}, $F_1^\prime(s)$ and $F_1^\prime(\alpha h_k(\alpha^{-1} s))$, as well $G_1^\prime(s)$ and $G_1^\prime(h_k(s))$, have the same sign for all $s\in(-1,1)$, which yields
    \begin{alignat*}{2}
        F_1^\prime(u_k)^2 &\leq F_1^\prime(u) F_1^\prime(u_k)  &&\qquad\text{a.e.~in~}\Om, \\
        G_1^\prime(v_k)^2 &\leq G_1^\prime(v) G_1^\prime(v_k) &&\qquad\text{a.e.~on~}\Ga.
    \end{alignat*}
    Consequently, it holds
    \begin{align}\label{App:Est:Pot:Test:1}
        \begin{split}
        &\intO \abs{F_1^\prime(u_k)}^p\dx + \intG \abs{G_1^\prime(v_k)}^p \dG
        \\
        &\quad \leq \intO F_1^\prime(u)F_1^\prime(u_k)\abs{F_1^\prime(u_k)}^{p-2}\dx + \intG G_1^\prime(v)G_1^\prime(v_k)\abs{G_1^\prime(v_k)}^{p-2}\dG.
        \end{split}
    \end{align}
    For the last term on the right-hand side of \eqref{App:Est:Pot:Test}, we exploit the domination property \eqref{Domination:Prime} as well as the monotonicity of the function $\R\ni s\mapsto F_1^\prime(\alpha h_k(s))\abs{F_1^\prime(\alpha h_k(s))}^{p-2}$ to infer
    \begin{align}\label{App:Est:Pot:Test:2}
        \begin{split}
            &- \sigma(K) \intG (\alpha v - u)\big(\alpha G_1^\prime(v_k)\abs{G_1^\prime(v_k)}^{p-2} - F_1^\prime(u_k)\abs{F_1^\prime(u_k)}^{p-2}\big)\dG \\
            &\quad = - \sigma(K) \intG (\alpha v - u)\big(F_1^\prime(\alpha v_k)\abs{F_1^\prime(\alpha v_k)}^{p-2} - F_1^\prime(u_k)\abs{F_1^\prime(u_k)}^{p-2}\big)\dG \\
            &\qquad - \sigma(K) \intG (\alpha v - u)\big(\alpha G_1^\prime(v_k)\abs{G_1^\prime(v_k)}^{p-2} - F_1^\prime(\alpha v_k)\abs{F_1^\prime(\alpha v_k)}^{p-2}\big)\dG \\
            &\quad \leq 2\sigma(K) \Big(\intG \abs{G_1^\prime(v_k)}^{p-1}\dG + 2^{p-2}\kappa_1^{p-2}\intG \abs{G_1^\prime(v_k)}^{p-1}\dG + 2^{p-2}\kappa_2^{p-1}\abs{\Ga}\Big).
        \end{split}
    \end{align}
    Here, we additionally made use of Minkowski's inequality
    \begin{align*}
        \abs{a+b}^{p-1} \leq 2^{p-2}\big(\abs{a}^{p-1} + \abs{b}^{p-1}\big) \qquad\text{for any~}a,b\in\R.
    \end{align*}
    Combining \eqref{App:Est:Pot:Test} with \eqref{App:Est:Pot:Test:1} and \eqref{App:Est:Pot:Test:2}, and using Hölder's and Young's inequality, we get
    \begin{align}
        \label{EST:FPGP:LP}
        \norm{(F_1^\prime(u_k),G_1^\prime(v_k))}_{\mathcal{L}^p} \leq C\big(1 + \norm{(f,g)}_{\mathcal{L}^p}\big)
    \end{align}
    for some constant $C = C(p) > 0$ independent of $k$. For more details, we refer to the proof of \cite[Proposition 6.1]{Knopf2024a} since similar estimates have been carried out there. Finally, by means of Fatou's lemma, we end up with
    \begin{align*}
        \norm{(F_1^\prime(u), G_1^\prime(v))}_{\mathcal{L}^p} \leq C\big( 1 + \norm{(f,g)}_{\mathcal{L}^p}\big),
    \end{align*}
    and and application of elliptic regularity theory for bulk-surface systems readily implies \eqref{App:Est:Pot:L^p}.

    \textit{The case $p\in(2,\infty)$ and $K = 0$.} First, we consider $2 < p \leq 4$. Performing the same testing procedure as in the case $K\in(0,\infty)$ and arguing again with the convexity of $F_1$ and $G_1$, we deduce
    \begin{align}\label{App:Est:Pot:Test:0}
        \begin{split}
            &\intO \abs{F_1^\prime(u_k)}^p\dx + \intG \abs{G_1^\prime(v_k)}^p\dG \\
            &\quad \leq \intO f F_1^\prime(u_k)\abs{F_1^\prime(u_k)}^{p-2}\dx + \intG gG_1^\prime(v_k)\abs{G_1^\prime(v_k)}^{p-2} \\
            &\qquad - \intG \deln u\big(\alpha G_1^\prime(v_k)\abs{G_1^\prime(v_k)}^{p-2} - F_1^\prime(u_k)\abs{F_1^\prime(u_k)}^{p-2}\big)\dG.
        \end{split}
    \end{align}
    We note that this equation differs from \eqref{App:Est:Pot:Test} only in the last term on the right-hand side, which now contains the normal derivative $\deln u$. Therefore, it has to be treated differently. First, due the regularity $u\in H^2(\Om)$, the trace embedding $H^1(\Om)\emb L^4(\Ga)$ implies that $\deln u\in L^4(\Ga)$. Then, exploiting the trace relation $u_k = \alpha v_k$ a.e.~on $\Ga$ in combination with the domination property \eqref{Domination:Prime}, we obtain 
    \begin{align*}
        &-\intG \deln u \big(\alpha G_1^\prime(v_k)\abs{G_1^\prime(v_k)}^{p-2} - F_1^\prime(u_k)\abs{F_1^\prime(u_k)}^{p-2}\big)\dG \\
        &\quad = -\intG \deln u\big(\alpha G_1^\prime(v_k)\abs{G_1^\prime(v_k)}^{p-2} - F_1^\prime(\alpha v_k)\abs{F_1^\prime(\alpha v_k)}^{p-2}\big)\dG \\
        &\quad\leq \intG \abs{\deln u}\big((1 + 2^{p-2}\kappa_1^{p-1})\abs{G_1^\prime(v_k)}^{p-1} + 2^{p-2}\kappa_2^{p-1}\big)\dG.
    \end{align*}
    Hence, in a similar manner to the case $K\in(0,\infty)$ we can derive the estimate
    \begin{align*}
        \norm{(F_1^\prime(u_k), G_1^\prime(v_k))}_{\mathcal{L}^p} \leq C\big( 1 + \norm{(f,g)}_{\mathcal{L}^p} + \norm{\deln u}_{L^p(\Ga)}\big),
    \end{align*}
    where the constant $C = C(p) > 0$ is independent of $k$.
    This proves the desired claim for $p\in(2,4]$ by means of Fatou's lemma.
    To establish the desired result also in the case $p\in(4,\infty)$, we employ elliptic regularity theory (see \cite[Proposition A.1]{Knopf2024a}) to infer $(u,v)\in \mathcal{W}^{2,4}$. Then, using again the trace theorem, we even have $\deln u\in W^{3/4,4}(\Ga)\emb L^\infty(\Ga)$. This allows us to choose $p\in(4,\infty)$ in our previous calculations, which verifies 
    \begin{align*}
        \norm{(F_1^\prime(u),G_1^\prime(v))}_{\mathcal{L}^p} \leq C\big(1 + \norm{(f,g)}_{\mathcal{L}^p} + \norm{\deln u}_{L^p(\Ga)}\big)
    \end{align*}
    in all cases $p\in(2,\infty)$. Applying once more \cite[Proposition A.1]{Knopf2024a}, we readily deduce \eqref{App:Est:Pot:L^p}.

    In summary, we conclude that \eqref{App:Est:Pot:L^p} is verified for all choices $K\in[0,\infty)$ as long as $p\in [2,\infty)$.

    We still have to prove \eqref{App:Est:Pot:L^p} also for $p = \infty$. To this end, we now assume that $(f,g)\in\mathcal{L}^\infty$. 
    Then, our previous calculations show that $(F_1^\prime(u), G_1^\prime(v))\in \mathcal{L}^r$ for any $r\in(2,\infty)$, and thus, again by elliptic regularity theory, we have $(u,v)\in \mathcal{W}^{2,r}$ for any $r\in(2,\infty)$. In particular, this entails that $\deln u\in L^\infty(\Ga)$. First we perform some preliminary computations. Fix an arbitrary $r\in(2,\infty)$. Testing equation \eqref{BSE:2} 
    with $G_1^\prime(v_k)\abs{G_1^\prime(v_k)}^{r-2}$ and proceeding similarly as in the case $p\in(2,\infty)$ above, we see that
    \begin{align*}
        \norm{G_1^\prime(v_k)}_{L^r(\Ga)}^r 
        \leq \big(1 + \norm{g}_{L^r(\Ga)} + \gamma(K)\norm{\deln u}_{L^r(\Ga)}\big)\norm{G_1^\prime(v_k)}_{L^r(\Ga)}^{r-1}.
    \end{align*}
    Since $g,\deln u\in L^\infty(\Gamma)$, this directly implies
    \begin{align}\label{App:Est:Pot:G:p}
        \norm{G_1^\prime(v_k)}_{L^r(\Ga)} 
        \leq (1+\abs{\Ga})\big(1 + \norm{g}_{L^\infty(\Ga)} 
        + \gamma(K)\norm{\deln u}_{L^\infty(\Ga)}\big) 
        <\infty.
    \end{align}
    We thus conclude that
    \begin{align}\label{App:Est:Pot:G:p:infty}
        \norm{G_1^\prime(v_k)}_{L^\infty(\Ga)} 
        \leq (1+\abs{\Ga}) \big( 1 + \norm{(f,g)}_{\mathcal{L}^\infty}
        + \gamma(K)\norm{\deln u}_{L^\infty(\Ga)} \big) < \infty.
    \end{align}
    Since $\abs{v_k} < 1$ and $\abs{v} < 1$ a.e.~on $\Ga$, we have
    \begin{align*}
        G_1^\prime(v_k) \rightarrow G_1^\prime(v) \qquad\text{a.e.~on~}\Ga \ \text{as~}k\rightarrow\infty.
    \end{align*}
    In combination with \eqref{App:Est:Pot:G:p:infty} this proves that
    \begin{align}
        \label{EST:GPR}
        \norm{G_1^\prime(v)}_{L^\infty(\Ga)} 
        \leq (1+\abs{\Ga})\big( 1 + \norm{(f,g)}_{\mathcal{L}^\infty}
        + \gamma(K)\norm{\deln u}_{L^\infty(\Ga)}\big) 
        < \infty.
    \end{align}
    To prove the corresponding estimate for $F_1^\prime(u)$, we again have to distinguish two cases: $K\in(0,\infty)$ and $K = 0$.

    \textit{The case $p=\infty$ and $K\in(0,\infty)$.} 
    We first obtain for all $r\in(2,\infty)$ the estimate
    \begin{align}\label{App:Est:Pot:Test:p}
        \begin{split}
            &\intO \abs{F_1^\prime(u_k)}^r\dx + \intG \abs{G_1^\prime(v_k)}^r \dG \\
            &\quad \le \intO f F_1^\prime(u_k)\abs{F_1^\prime(u_k)}^{r-2}\dx + \intG gG_1^\prime(v_k)\abs{G_1^\prime(v_k)}^{r-2} \\
            &\qquad - \sigma(K) \intG (\alpha v - u)\big(\alpha G_1^\prime(v_k)\abs{G_1^\prime(v_k)}^{r-2} - F_1^\prime(\alpha v_k)\abs{F_1^\prime(\alpha v_k)}^{r-2}\big)\dG.
        \end{split}
    \end{align}
    by combining \eqref{App:Est:Pot:Test} and \eqref{App:Est:Pot:Test:1}.
    Using the domination property \eqref{Domination:Prime}, the last term on the right-hand side of \eqref{App:Est:Pot:Test:p} can be estimated as
    \begin{align*}
        &- \sigma(K) \intG (\alpha v - u)\big(\alpha G_1^\prime(v_k)\abs{G_1^\prime(v_k)}^{r-2} - F_1^\prime(\alpha v_k)\abs{F_1^\prime(\alpha v_k)}^{r-2}\big)\dG \\
        &\quad\leq 2\sigma(K)\abs{\Ga}\norm{G_1^\prime(v_k)}_{L^\infty(\Ga)}^{r-1} + 2^{r-1}\kappa_1^{r-1}\sigma(K)\abs{\Ga}\norm{G_1^\prime(v_k)}_{L^\infty(\Ga)}^{r-1} + 2^{r-1}\kappa_2^{r-1}\sigma(K)\abs{\Ga}.
    \end{align*}
    Here, we also used that $\abs{u}< 1$ a.e.~in $\Omega$ and $\abs{v}< 1$ a.e.~on $\Gamma$.
    Consequently, by means of Hölder's and Young's inequalities, we conclude that
    \begin{align*}
        &\norm{F_1^\prime(u_k)}_{L^r(\Om)}^r + \norm{G_1^\prime(v_k)}_{L^r(\Ga)}^r 
        \\
        &\quad\leq \norm{f}_{L^r(\Om)} \norm{F_1^\prime(u_k)}_{L^r(\Om)}^{r-1} + \norm{g}_{L^r(\Ga)}\norm{G_1^\prime(v_k)}_{L^r(\Ga)}^{r-1}
        \\
        &\qquad+ 2\sigma(K)\abs{\Ga}
        \big(1 + 2^{r-2}\kappa_1^{r-1}\big)
        \norm{G_1^\prime(v_k)}_{L^\infty(\Ga)}^{r-1}
        + \sigma(K)\abs{\Ga} 2^{r-1}\kappa_2^{r-1} 
        \\
        &\quad\leq \frac12\norm{F_1^\prime(u_k)}_{L^r(\Om)}^r + \frac12\norm{G_1^\prime(v_k)}_{L^r(\Ga)}^r 
        \\
        &\qquad
        + \frac1r\left(\frac{2(r-1)}{r}\right)^{r-1}
        \big( \abs{\Omega} + \abs{\Gamma}\big) \,\norm{(f,g)}_{\mathcal{L}^\infty}^r 
        \\
        &\qquad+ 2\sigma(K)\abs{\Ga}
        \big(1 + 2^{r-2}\kappa_1^{r-1}\big)
        (1+\abs{\Gamma})^{r-1} 
        (1+\norm{(f,g)}_{\mathcal{L}^\infty})^r 
        + \sigma(K)\abs{\Ga} 2^{r-1}\kappa_2^{r-1} 
    \end{align*}
    Thus, after absorption of the first two terms on the right-hand side by the left-hand side, we take the $r$-th root on both sides of the resulting inequality to deduce
    \begin{align*}
        \norm{F_1^\prime(u_k)}_{L^r(\Om)} + \norm{G_1^\prime(v_k)}_{L^r(\Ga)} \leq C\big(1 + \norm{(f,g)}_{\mathcal{L}^\infty} \big)
    \end{align*}
    for a constant $C > 0$ that is independent of $r$. We thus infer
    \begin{align*}
        \norm{F_1^\prime(u_k)}_{L^\infty(\Om)} + \norm{G_1^\prime(v_k)}_{L^\infty(\Ga)} 
        \leq C\big(1 + \norm{(f,g)}_{\mathcal{L}^\infty} \big).
    \end{align*}
    By means of similar arguments as already used before, we conclude that
    \begin{align*}
        \norm{(F_1^\prime(u), G_1^\prime(v))}_{\mathcal{L}^\infty} 
        \leq C\big(1 + \norm{(f,g)}_{\mathcal{L}^\infty} \big),
    \end{align*}
    and \eqref{App:Est:Pot:L^p} follows from another application of \cite[Proposition A.1]{Knopf2024a}.

    \textit{The case $p=\infty$ and $K = 0$.} We test equation \eqref{BSE:1} once more with $F_1^\prime(u_k)\abs{F_1^\prime(v_k)}^{r-2}$ for any $r\in(2,\infty)$ and integrate the resulting equation. After integrating by parts, we find that
    \begin{align*}
        \intO \abs{F_1^\prime(u_k)}^r\dx \leq \intO \abs{f}\abs{F_1^\prime(u_k)}^{r-1}\dx - \intG \deln u \, F_1^\prime(\alpha v_k)\abs{F_1^\prime(\alpha v_k)}^{r-2}\dG.
    \end{align*}
    Using Hölder's and Young's inequalities, we have
    \begin{align}
        \label{EST:PK:1}
        \begin{split}
            \norm{F_1^\prime(u_k)}_{L^r(\Om)}^r 
            &\leq \norm{f}_{L^r(\Om)}\norm{F_1^\prime(u_k)}_{L^r(\Om)}^{r-1}      + \norm{\deln u}_{L^r(\Gamma)} 
                \norm{F_1^\prime(\alpha v_k)}_{L^r(\Gamma)}^{r-1}
            \\
            &\leq \frac12\norm{F_1^\prime(u_k)}_{L^r(\Om)}^r 
                + \frac1r\left(\frac{2(r-1)}{r}\right)^{r-1}
                \norm{f}_{L^r(\Om)}^r 
            \\
            &\quad + \frac 1r \norm{\deln u}_{L^r(\Gamma)}^r
                + \frac{r-1}{r} \norm{F_1^\prime(\alpha v_k)}_{L^r(\Gamma)}^r.
        \end{split}
    \end{align}
    Thus, we deduce that
    \begin{align}
        \label{EST:PK:2}
        \begin{split}
            \norm{F_1^\prime(u_k)}_{L^r(\Om)}^r 
            &\leq \frac2r\left(\frac{2(r-1)}{r}\right)^{r-1}\norm{f}_{L^r(\Om)}^r 
                + \frac2r\norm{\deln u}_{L^r(\Gamma)}^r
                + \frac{2(r-1)}{r}\norm{F_1^\prime(\alpha v_k)}_{L^r(\Gamma)}^r
            \\
            &\leq \frac2r\left(\frac{2(r-1)}{r}\right)^{r-1}(1+\abs{\Om}) \norm{f}_{L^\infty(\Om)}^r 
                + \frac2r(1+\abs{\Gamma})  \norm{\deln u}_{L^\infty(\Gamma)}^r
            \\
            &\quad
                + \frac{2(r-1)}{r}(1+\abs{\Gamma}) 
                \norm{F_1^\prime(\alpha v_k)}_{L^\infty(\Gamma)}^r
        \end{split}
    \end{align}
    Due to the domination property \eqref{Domination:Prime} and estimate \eqref{App:Est:Pot:G:p:infty}, we infer that
    \begin{align}
        \label{EST:PK:3}
        \begin{split}
        \norm{F_1^\prime(\alpha v_k)}_{L^\infty(\Gamma)}
        &\leq \kappa_1 \norm{G_1^\prime(v_k)}_{L^\infty(\Gamma)}
            + \kappa_2
        \\
        &\leq C\big( 1 + \norm{(f,g)}_{\mathcal{L}^\infty}
            + \norm{\deln u}_{L^\infty(\Ga)} \big).
        \end{split}
    \end{align} 
    Hence, after taking the $r$-th root on both sides of \eqref{EST:PK:2}, we use \eqref{EST:PK:3} to end up with
    \begin{align*}
        \norm{F_1^\prime(u_k)}_{L^r(\Om)} 
        \leq C\big(1 
            + \norm{(f,g)}_{\mathcal{L}^\infty}
            + \norm{\deln u}_{L^\infty(\Ga)}
        \big)
        < \infty,
    \end{align*}
    where the constant $C>0$ is independent of $r$.
    Using Fatou's lemma, we get
    \begin{align*}
        \norm{F_1^\prime(u)}_{L^r(\Om)} 
        \leq C\big(1 
            + \norm{(f,g)}_{\mathcal{L}^r}
            + \norm{\deln u}_{L^r(\Ga)}
        \big)
        < \infty.
    \end{align*}
    Then, sending $r\rightarrow\infty$, we conclude that
    \begin{align*}
        \norm{F_1^\prime(u)}_{L^\infty(\Om)} 
        \leq C\big(1 
            + \norm{(f,g)}_{\mathcal{L}^\infty}
            + \norm{\deln u}_{L^\infty(\Ga)}
        \big)
        < \infty,
    \end{align*}
    and in combination with \eqref{EST:GPR} and \cite[Proposition A.1]{Knopf2024a}, this verifies \eqref{App:Est:Pot:L^p} in the case $p=\infty$ and $K=0$.
    
    In summary, we have shown that $(F_1^\prime(u), G_1^\prime(v))\in\mathcal{L}^p$ along with estimate \eqref{App:Est:Pot:L^p} for all $p\in [2,\infty]$ in all cases $K\in[0,\infty)$. 
    We further observe that $u\in H^2(\Omega)\emb C(\overline{\Omega})$ and $v \in H^2(\Gamma) \emb C(\Gamma)$. Hence, if $p=\infty$, the separation property \eqref{BSE:SEPPROP} follows from Assumption~\ref{Assumption:Pot:Sing} combined with the regularity $(F_1^\prime(u), G_1^\prime(v)) \in \mathcal{L}^\infty$.
    The proof is complete.
\end{proof}

\medskip 

Based on the above results, we now derive improved estimates on the principal part of the differential in System~\eqref{BSE}. This will be the main ingredient in the proof of Theorem~\ref{THM:TIMEREG}.

\begin{corollary}\label{COR:BSE:OP}
    Let $(f,g)\in\mathcal{H}^1$ and let $(u,v)\in\mathcal{H}^2\cap\mathcal{H}^1_{K,\alpha}$ be the corresponding solution of System~\eqref{BSE} given by Proposition~\ref{App:Prop:Existence}. Then, there exists a constant $C > 0$, depending only on $\Om$, $K$, $\alpha$, $F$ and $G$ such that the following inequalities hold:
    \begin{enumerate}[label=\textnormal{(\alph*)},topsep=0ex,leftmargin=*]
        \item If $K = 0$, then
        \begin{align*}
            \begin{split}
            &\norm{(-\Lap u, -\Lapg v + \alpha\deln u)}_{\mathcal{L}^2}^2 \\
            &\quad\leq C\big(1 + \norm{(f,g)}_{\mathcal{H}^1}\big)\norm{\Grad u}_{L^2(\Om)}^{1/2}
                \norm{\Grad u}_{H^1(\Om)}^{1/2} 
            + \norm{(\Grad f,\Gradg g)}_{\mathcal{L}^2}\norm{(\Grad u,\Gradg v)}_{\mathcal{L}^2}.
            \end{split}
        \end{align*}
        \item If $K\in(0,\infty)$, then
        \begin{align*}
            \norm{(-\Lap u, -\Lapg v + \alpha\deln u)}_{\mathcal{L}^2}^2 \leq C\big(1 + \norm{(f,g)}_{\mathcal{H}^1}\big) + \norm{(\Grad f,\Gradg g)}_{\mathcal{L}^2}\norm{(\Grad u,\Gradg v)}_{\mathcal{L}^2}.
        \end{align*}
    \end{enumerate}
\end{corollary}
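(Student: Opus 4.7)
\textbf{Plan of proof for Corollary~\ref{COR:BSE:OP}.} Since $(u,v)\in\mathcal{H}^2$ by Proposition~\ref{App:Prop:Existence}, the quantities $-\Lap u$ and $-\Lapg v + \alpha\deln u$ belong to $\mathcal{L}^2$. I would multiply \eqref{BSE:1} by $-\Lap u$, integrate over $\Om$, multiply \eqref{BSE:2} by $-\Lapg v + \alpha\deln u$, integrate over $\Ga$, and add the two identities. Integration by parts (using that $\Ga$ has no boundary, so no boundary terms arise from the surface Laplacian) yields
\begin{align*}
&\norm{(-\Lap u, -\Lapg v + \alpha\deln u)}_{\mathcal{L}^2}^2
+ \intO F_1''(u) |\Grad u|^2 \dx + \intG G_1''(v) |\Gradg v|^2 \dG \\
&\quad= \intO \Grad u \cdot \Grad f \dx + \intG \Gradg v \cdot \Gradg g \dG
+ \intG \bigl(F_1'(u) - \alpha G_1'(v) + \alpha g - f\bigr) \deln u \dG.
\end{align*}
Since $F_1''\geq \Theta_\Om$ and $G_1''\geq \Theta_\Ga$ are non-negative, the two curvature integrals can be dropped. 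The bulk/surface gradient terms are immediately estimated by $\norm{(\Grad f,\Gradg g)}_{\mathcal{L}^2}\norm{(\Grad u,\Gradg v)}_{\mathcal{L}^2}$ via Cauchy--Schwarz, so everything reduces to controlling the boundary integral
\[
\mathcal{I} \coloneqq \intG \bigl(F_1'(u) - \alpha G_1'(v) + \alpha g - f\bigr) \deln u \dG.
\]

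\emph{Case $K\in(0,\infty)$.} Using \eqref{BSE:3}, substitute $\deln u = K^{-1}(\alpha v - u)$ and split
\[
\mathcal{I} = K^{-1}\intG (F_1'(u) - F_1'(\alpha v))(\alpha v - u)\dG
+ K^{-1}\intG \bigl(F_1'(\alpha v) - \alpha G_1'(v) + \alpha g - f\bigr)(\alpha v - u)\dG.
\]
The first integral is non-positive by the monotonicity of $F_1'$ and can be discarded. For the second one, the domination property \eqref{Domination:Prime} yields $|F_1'(\alpha v) - \alpha G_1'(v)|\le (\kappa_1+|\alpha|)|G_1'(v)| + \kappa_2$. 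Combining with Hölder's inequality, Proposition~\ref{App:Proposition:Pot:L^p} (with $p=2$) to bound $\norm{G_1'(v)}_{L^2(\Ga)}\le C(1+\norm{(f,g)}_{\mathcal{L}^2})$, the trace estimate $\norm{f}_{L^2(\Ga)}\le C\norm{f}_{H^1(\Om)}$, and the $\mathcal{H}^2$-estimate \eqref{App:Est:H^2+L^2} to bound $\norm{(u,v)}_{L^2(\Ga)}\le C(1+\norm{(f,g)}_{\mathcal{L}^2})$, the whole term can be absorbed into a bound of the form $C(1+\norm{(f,g)}_{\mathcal{H}^1})$, which gives (b).

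\emph{Case $K=0$.} Here $u = \alpha v$ a.e.~on $\Ga$, so $F_1'(u) = F_1'(\alpha v)$, and $\mathcal{I}$ simplifies to
\[
\mathcal{I} = \intG \bigl(F_1'(\alpha v) - \alpha G_1'(v) + \alpha g - f\bigr) \deln u \dG.
\]
Again by \eqref{Domination:Prime} and Hölder, this is bounded by
\[
C\bigl(1 + \norm{G_1'(v)}_{L^2(\Ga)} + \norm{f}_{L^2(\Ga)} + \norm{g}_{L^2(\Ga)}\bigr) \norm{\deln u}_{L^2(\Ga)}.
\]
Proposition~\ref{App:Proposition:Pot:L^p} and the trace embedding make the first factor $\le C(1+\norm{(f,g)}_{\mathcal{H}^1})$. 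For the second factor, since the Robin-type cancellation is no longer available, I bound $\norm{\deln u}_{L^2(\Ga)}\le \norm{\Grad u}_{L^2(\Ga)}$ and then apply the trace interpolation inequality from Lemma~\ref{Prelim:Lemma:Interpol} componentwise to $\Grad u$:
\[
\norm{\Grad u}_{L^2(\Ga)} \le C \norm{\Grad u}_{L^2(\Om)}^{1/2}\norm{\Grad u}_{H^1(\Om)}^{1/2}.
\]
Combining the two factors produces exactly the structure claimed in (a). The main obstacle is this case $K=0$: the $H^2$-bound on $u$ would give a too crude estimate on $\deln u$, and it is the interpolation Lemma~\ref{Prelim:Lemma:Interpol} that allows us to keep the homogeneity $\norm{\Grad u}_{L^2(\Om)}^{1/2}\norm{\Grad u}_{H^1(\Om)}^{1/2}$ which is crucial for later applications to the time-regularity of the phase-fields in Theorem~\ref{THM:TIMEREG}.
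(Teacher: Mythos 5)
Your plan follows the paper's own proof almost step by step: multiply \eqref{BSE:1} by $-\Lap u$ and \eqref{BSE:2} by $-\Lapg v + \alpha\deln u$, integrate by parts, discard the nonnegative convexity terms, and then treat the boundary integral by substituting \eqref{BSE:3} when $K\in(0,\infty)$ and by $\norm{\deln u}_{L^2(\Ga)}\le \norm{\Grad u}_{L^2(\Ga)}$ together with the trace interpolation of Lemma~\ref{Prelim:Lemma:Interpol} when $K=0$. Two points, however, need repair.

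First, the identity you start from is only formal: writing $\intO F_1''(u)\abs{\Grad u}^2\dx$ and integrating $\intO F_1'(u)(-\Lap u)\dx$ by parts presupposes $F_1'(u)\in H^1(\Om)$, which is not available since $F_1''$ blows up at $\pm1$ and we only know $F_1'(u)\in L^2(\Om)$. The paper avoids this by inserting the truncations $u_k=\alpha h_k(\alpha^{-1}u)$, $v_k=h_k(v)$ (so that $F_1'(u_k)$, $G_1'(v_k)$ are Lipschitz compositions for which the chain rule is licit), keeping the remainders $\intO (F_1'(u)-F_1'(u_k))(-\Lap u)\dx$ and its surface analogue, which vanish as $k\to\infty$ by dominated convergence, and carrying out your boundary estimates uniformly in $k$. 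Your argument needs this (or an equivalent approximation) to be rigorous; the inequality you drop the convexity terms from is then obtained in the limit rather than from an exact identity.

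Second, in case (b) you bound the factor $\norm{\alpha v-u}_{L^2(\Ga)}$ by the $\mathcal{H}^2$-estimate \eqref{App:Est:H^2+L^2}, i.e.\ by $C(1+\norm{(f,g)}_{\mathcal{L}^2})$. Multiplying this with the other factor, which is itself of size $C(1+\norm{(f,g)}_{\mathcal{H}^1})$, produces a bound quadratic in $\norm{(f,g)}$, not the linear bound $C(1+\norm{(f,g)}_{\mathcal{H}^1})$ claimed in the corollary (and this linearity is what is used later). The fix is the one the paper uses: Proposition~\ref{App:Prop:Existence} also gives the confinement $\abs{u}<1$ a.e.\ in $\Om$ and $\abs{v}<1$ a.e.\ on $\Ga$, so $\norm{\alpha v-u}_{L^\infty(\Ga)}\le 2$ and the factor is bounded by a constant independent of $(f,g)$. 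With these two corrections your argument coincides with the paper's proof.
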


\begin{proof}
    For any $k\in\N$, let $u_k$ and $v_k$ again denote the truncation of $u$ and $v$, respectively, as in the proof of Proposition~\ref{App:Proposition:Pot:L^p}. We start by multiplying \eqref{BSE:1} by $-\Lap u$ and \eqref{BSE:2} by $-\Lapg v + \alpha\deln u$. Then, integrating the resulting equations over $\Om$ and $\Ga$, respectively, and adding them, we have
    \begin{align}\label{EST:BS-OP:1}
        \begin{split}
            &\intO \abs{\Lap u}^2\dx + \intG \abs{-\Lapg v + \alpha\deln u}^2 \dG 
            \\
            &\quad = \intO f(-\Lap u)\dx + \intG g(-\Lapg v + \alpha\deln u)\dG  
            \\
            &\qquad 
            - \intO F_1^\prime(u_k)(-\Lap u) 
            - \intO (F_1^\prime(u) - F_1^\prime(u_k))(-\Lap u)\dx
            \\
            &\qquad
            - \intG G_1^\prime(v_k)(-\Lapg v + \alpha\deln u)\dG
            - \intG (G_1^\prime(v) - G_1^\prime(v_k))(-\Lapg v + \alpha\deln u)\dG.
        \end{split}
    \end{align}
    Arguing as in the proof of Proposition~\ref{App:Proposition:Pot:L^p}, we deduce that
    \begin{align*}
        &-\intO F_1^\prime(u_k)(-\Lap u)\dx - \intG G_1^\prime(v_k)(-\Lapg v + \alpha\deln u)\dG 
        \\
        &\qquad = -\intO \Grad u\cdot\Grad u_k F_1^{\prime\prime}(u_k)\dx - \intG \Grad v\cdot\Grad v_k G_1^{\prime\prime}(v_k)\dG - \intG \big(\alpha G_1^\prime(v_k) - F_1^\prime(u_k)\big)\deln u\dG 
        \\
        &\quad\leq \intG  \big(\alpha G_1^\prime(v_k) - F_1^\prime(u_k)\big)\deln u\dG.
    \end{align*}
    After integrating by parts in the first two terms on the right-hand side of \eqref{EST:BS-OP:1}, we use Hölder's inequality to obtain
    \begin{align}\label{EST:BS-OP:2}
        \begin{split}
            &\intO \abs{\Lap u}^2\dx + \intG \abs{-\Lapg v + \alpha\deln u}^2\dG 
            \\
            &\quad\leq \norm{(\Grad f,\Gradg g)}_{\mathcal{L}^2}\norm{(\Grad u,\Gradg v)}_{\mathcal{L}^2} 
                + \intG \big(\alpha g - f)\deln u\dG 
            \\
            &\qquad - \intO (F_1^\prime(u) - F_1^\prime(u_k))(-\Lap u)\dx 
                - \intG (G_1^\prime(v) - G_1^\prime(v_k))(-\Lapg v + \alpha\deln u)\dG 
            \\
            &\qquad + \intG \big(\alpha G_1^\prime(v_k) - F_1^\prime(u_k)\big)\deln u\dG.
        \end{split}
    \end{align}
    We point out that the third and the fourth summand on the right-hand side of this inequality vanish as $k\rightarrow\infty$. We now handle the cases $K=0$ and $K\in (0,\infty)$ separately.
    
    If $K = 0$, we make use of the trace relation $u_k = \alpha v_k$ a.e.~on $\Ga$, Lemma~\ref{Prelim:Lemma:Interpol} and Proposition~\ref{App:Prop:Existence} to infer that
    \begin{align}
        \label{EST:BS-OP:5}
        \begin{split}
        &-\intG \big(\alpha G_1^\prime(v_k) - F_1^\prime(u_k)\big)\deln u\dG 
        \\ 
        &\quad\leq C\norm{\big(F_1^\prime(u_k),G_1^\prime(v_k)\big)}_{\mathcal{L}^2}\norm{\Grad u}_{L^2(\Ga)} 
        \\ 
        &\quad\leq C\big( 1 + \norm{\big(F_1^\prime(u),G_1^\prime(v)\big)}_{\mathcal{L}^2}\big)\norm{\Grad u}_{L^2(\Ga)} 
        \\
        &\quad\leq C\big(1 + \norm{(f,g)}_{\mathcal{L}^2}\big)\norm{\Grad u}_{L^2(\Om)}^{1/2}\norm{\Grad u}_{H^1(\Om)}^{1/2}
        \end{split}
    \end{align}
    if $k$ is sufficiently large.
    Analogously, again using Lemma~\ref{Prelim:Lemma:Interpol}, we have
    \begin{align}
        \label{EST:BS-OP:6}
        \intG \big(\alpha g - f\big)\deln u\dG \leq \norm{(f,g)}_{\mathcal{H}^1}\norm{\Grad u}_{L^2(\Om)}^{1/2}\norm{\Grad u}_{H^1(\Om)}^{1/2}.
    \end{align}
    Hence, the claim follows by combining \eqref{EST:BS-OP:2} with \eqref{EST:BS-OP:5} and \eqref{EST:BS-OP:6}, and passing to the limit $k\to\infty$.

    If $K\in(0,\infty)$, we first use \eqref{BSE:3} to reformulate the normal derivative $\deln u$. Then, by Lemma~\ref{Prelim:Lemma:Interpol} and Proposition~\ref{App:Est:H^2+L^2} and proceeding similarly as in the case $K=0$, we infer that
    \begin{align}
        \label{EST:BS-OP:3}
        \begin{split}
            &-\intG \big(\alpha G_1^\prime(v_k) - F_1^\prime(u_k)\big)\deln u \dG 
            = - \frac{1}{K} \intG (\alpha v - u)\big(\alpha G_1^\prime(v_k) - F_1^\prime(u_k)\big)\dG  
            \\
            &\quad\leq C\norm{\big(F_1^\prime(u_k),G_1^\prime(v_k)\big)}_{\mathcal{L}^2}
            \leq C\big(1 + \norm{(f,g)}_{\mathcal{L}^2}\big)
        \end{split}
    \end{align}
    if $k$ is sufficiently large.
    Due to the trace embedding $H^1(\Omega)\emb L^2(\Gamma)$, we further have
    \begin{align}
        \label{EST:BS-OP:4}
        \intG \big(\alpha g - f\big)\deln u\dG = -\frac1K\intG (\alpha v - u)\big(\alpha g - f\big)\dG\leq C\norm{(f,g)}_{\mathcal{H}^1}.
    \end{align}
    Thus, the claim follows by combining \eqref{EST:BS-OP:2} with \eqref{EST:BS-OP:3} and \eqref{EST:BS-OP:4}, and passing to the limit $k\to\infty$.
\end{proof}

\medskip

Lastly, invoking the $L^p$-estimates on the potentials established in Proposition~\ref{App:Proposition:Pot:L^p}, we are able to prove higher Sobolev estimates.
\begin{proposition}\label{App:Proposition:Sol:L^p+H^k}
    Let $(f,g)\in\mathcal{H}^1$ and let $(u,v)\in\mathcal{H}^2\cap\mathcal{H}^1_{K,\alpha}$ be the corresponding solution of System~\eqref{BSE}.
    Let $p\in [2,6]$ if $d=3$ and for any $p\in [2,\infty)$ if $d=2$.
    Then there exists a constant $C(p)> 0$ such that
    \begin{align}\label{App:Est:Sol:L^p}
        \norm{(u,v)}_{\mathcal{W}^{2,p}} + \norm{(F_1^\prime(u),G_1^\prime(v))}_{\mathcal{L}^p} \leq C(p) \big(1 + \norm{(f,g)}_{\mathcal{H}^1}\big).
    \end{align}
    Moreover, if $\Omega$ is of class $C^{k+2}$ and $(f,g)\in\mathcal{H}^k\cap\mathcal{L}^\infty$ for $k\in\{1,2\}$, we additionally have $(u,v)\in\mathcal{H}^{k+2}$ and there exists a constant $C(k)>0$ such that
    \begin{align}\label{App:Est:Sol:H^k}
        \norm{(u,v)}_{\mathcal{H}^{k+2}} \leq C(k)\big( 1 + \norm{(f,g)}_{\mathcal{H}^k}\big).
    \end{align}
\end{proposition}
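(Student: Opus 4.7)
My plan is to combine the $\mathcal{L}^p$-estimates of Proposition~\ref{App:Proposition:Pot:L^p} with Sobolev trace and embedding theorems; the delicate case $K=0$ requires a short bootstrap on the $\mathcal{W}^{2,p}$-regularity. The higher-order estimate will then follow by exploiting the separation property \eqref{BSE:SEPPROP} and reading \eqref{BSE} as a linear bulk-surface elliptic system to which standard regularity theory applies.

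For \eqref{App:Est:Sol:L^p} in the case $K\in(0,\infty)$, $\gamma(K)=0$ in \eqref{App:Est:Pot:L^p}, so Proposition~\ref{App:Proposition:Pot:L^p} immediately gives the bound with $\norm{(f,g)}_{\mathcal{L}^p}$ on the right-hand side. The Sobolev embeddings $H^1(\Omega)\emb L^p(\Omega)$ (for $p\in[2,6]$ if $d=3$ and any $p<\infty$ if $d=2$) and $H^1(\Gamma)\emb L^p(\Gamma)$ (for all $p<\infty$) then upgrade this to $\norm{(f,g)}_{\mathcal{H}^1}$, which is precisely \eqref{App:Est:Sol:L^p}. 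When $K=0$, I would bootstrap as follows: Proposition~\ref{App:Prop:Existence} gives $(u,v)\in\mathcal{H}^2$ with norm controlled by $1+\norm{(f,g)}_{\mathcal{H}^1}$, and hence by the trace theorem $\deln u\in H^{1/2}(\Gamma)\emb L^4(\Gamma)$. Applying Proposition~\ref{App:Proposition:Pot:L^p} with $p=4$ yields $(u,v)\in\mathcal{W}^{2,4}$, whence $\deln u\in W^{3/4,4}(\Gamma)\emb L^\infty(\Gamma)$. One final application of Proposition~\ref{App:Proposition:Pot:L^p} with the target $p$ closes the estimate for arbitrary admissible $p$.

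For \eqref{App:Est:Sol:H^k}, the key is that the hypothesis $(f,g)\in\mathcal{L}^\infty$ together with the $\mathcal{L}^\infty$-bound on $(F_1^\prime(u),G_1^\prime(v))$ and the strict separation \eqref{BSE:SEPPROP} ensure that all derivatives of $F_1$ and $G_1$ remain bounded along the solution. For $k=1$, the chain rule then gives $F_1^\prime(u)\in H^1(\Omega)$ and $G_1^\prime(v)\in H^1(\Gamma)$, so \eqref{BSE} reduces to a linear bulk-surface elliptic problem with $\mathcal{H}^1$ right-hand side $(f-F_1^\prime(u),g-G_1^\prime(v))$; the $\mathcal{H}^3$-regularity result for the coupled Laplacian with the Robin-type condition $K\deln u=\alpha v-u$ (cf.\ \cite[Theorem~3.3]{Knopf2021}) yields $(u,v)\in\mathcal{H}^3$ with the desired estimate. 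For $k=2$ I would proceed identically: since $d\le 3$, the embedding $\mathcal{H}^3\emb\mathcal{W}^{1,\infty}$ together with bounded higher derivatives of $F_1$ and $G_1$ imply, via a second chain-rule computation, that $F_1^\prime(u)\in H^2(\Omega)$ and $G_1^\prime(v)\in H^2(\Gamma)$; another application of the bulk-surface $\mathcal{H}^{k+2}$-regularity delivers $(u,v)\in\mathcal{H}^4$.

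The principal technical obstacle I anticipate is the $K=0$ case, where \eqref{BSE:3} collapses to the trace constraint $u=\alpha v$ and one must verify that the cited bulk-surface elliptic regularity results remain sharp at the $\mathcal{H}^{k+2}$-level under this Dirichlet-type coupling. Should this not follow verbatim from \cite{Knopf2021,Knopf2024a}, a workable fallback is to substitute $u|_\Gamma=\alpha v$ in the surface equation, thus reducing \eqref{BSE} to a Laplace--Beltrami problem on $\Gamma$ coupled to a bulk Poisson problem with data already known to lie in the required regularity class, and to apply classical interior and boundary elliptic regularity to each component separately.
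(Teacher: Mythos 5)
Your proposal is correct and follows the paper's strategy: the first estimate is obtained by combining Proposition~\ref{App:Proposition:Pot:L^p} with the Sobolev embeddings $H^1(\Omega)\emb L^p(\Omega)$ (for $p\le 6$ if $d=3$) and $H^1(\Gamma)\emb L^p(\Gamma)$, and the second by using the separation property \eqref{BSE:SEPPROP} to treat $F_1'(u)$, $G_1'(v)$ as regular lower-order terms and then invoking linear bulk-surface elliptic regularity. The only real deviation is the treatment of the term $\gamma(K)\norm{\deln u}_{L^p(\Gamma)}$ when $K=0$: the paper absorbs it on the same level $p$ via the trace embedding $W^{2/p,p}(\Omega)\emb L^p(\Gamma)$, the compact chain $W^{2,p}(\Omega)\emb W^{1+2/p,p}(\Omega)\emb L^\infty(\Omega)$, Ehrling's lemma and the pointwise bound $\abs{u}\le 1$, whereas you bootstrap through $\mathcal{W}^{2,4}$ and the trace bound $\deln u\in W^{3/4,4}(\Gamma)\emb L^\infty(\Gamma)$; both routes are valid, and your bootstrap essentially reproduces the argument already used inside the proof of Proposition~\ref{App:Proposition:Pot:L^p} for $K=0$ and $p>4$, so nothing new is needed. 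For \eqref{App:Est:Sol:H^k}, your two-stage argument ($k=1$ first, then $k=2$ via $\mathcal{H}^3\emb \mathcal{W}^{1,\infty}$) matches the paper's one-shot claim that $(F_1'(u),G_1'(v))\in\mathcal{H}^2$; be aware that for $k=2$ both your argument and the paper's implicitly use boundedness of third derivatives of $F_1,G_1$ on the separated range (harmless for the logarithmic potential), and that the resulting constants depend on the separation parameter, hence on $\norm{(f,g)}_{\mathcal{L}^\infty}$, exactly as in the paper — so this is a shared looseness, not a gap in your proof.
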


\medskip

\begin{proof}
    Since the estimate \eqref{App:Est:Sol:L^p} for $p=2$ follows directly from Proposition~\ref{App:Prop:Existence}, we only consider $p\in (2,6]$ if $d=3$ or $p\in (2,\infty)$ if $d=2$. 
    Let $(u,v)\in\mathcal{H}^2\cap\mathcal{H}^1_{K,\alpha}$ be the solution of System~\eqref{BSE} with source terms $(f,g)\in\mathcal{H}^1.$
    Now, Proposition~\ref{App:Proposition:Pot:L^p} yields $(u,v)\in\mathcal{W}^{2,p}$ and $(F_1^\prime(u), G_1^\prime(v)) \in \mathcal{L}^p$ along with the existence of a constant $C(p) > 0$ such that
    \begin{align}\label{EST:W2P:1}
        \norm{(u,v)}_{\mathcal{W}^{2,p}} + \norm{(F_1^\prime(u), G_1^\prime(v))}_{\mathcal{L}^p} \leq C(p)\big(1 + \norm{(f,g)}_{\mathcal{L}^p} + \gamma(K)\norm{\deln u}_{L^p(\Ga)}\big).
    \end{align}
    Recalling the trace embedding $W^{2/p,p}(\Omega)\emb L^p(\Gamma)$ as well as the compact Sobolev embeddings $W^{2,p}(\Omega)\emb W^{1+2/p,p}(\Omega) \emb L^\infty(\Omega)$, we use Ehrling's lemma to deduce that
    \begin{align*}
        \norm{\deln u}_{L^p(\Gamma)}
        \le C(p) \norm{u}_{W^{1+2/p,p}(\Omega)}
        \le \varepsilon \norm{u}_{W^{2,p}(\Omega)} 
            + C(p,\varepsilon) \norm{u}_{L^\infty(\Omega)}
    \end{align*}
    for any $\varepsilon >0$ and a constant $C(p,\varepsilon) > 0$ depending on $\varepsilon $.
    Hence, recalling that $\abs{u}\le 1$ a.e.~in $\Omega$, we infer from estimate \eqref{App:Est:Pot:L^p} of
    Proposition~\ref{App:Proposition:Pot:L^p} that
    \begin{align}
        \label{EST:W2P:2}
        \begin{split}
        \norm{(u,v)}_{\mathcal{W}^{2,p}} &\leq C\big(1 + \norm{(f,g)}_{\mathcal{L}^p} + \gamma(K)\norm{\deln u}_{L^p(\Ga)}\big) \\
        &\leq C(p)\big(1 + C(p,\varepsilon) + \norm{(f,g)}_{\mathcal{L}^p}\big) + C(p)\varepsilon\norm{(u,v)}_{\mathcal{W}^{2,p}}.
        \end{split}
    \end{align}
    Combining \eqref{EST:W2P:1} and \eqref{EST:W2P:2}, and choosing a sufficiently small $\varepsilon>0$, we conclude that \eqref{App:Est:Sol:L^p} holds.

    To prove the second assertion for any $k\in\{1,2\}$, we first recall that, according to Proposition~\ref{App:Proposition:Pot:L^p}, the regularity $(f,g)\in \mathcal{L}^\infty$ already implies that $u$ and $v$ satisfy the separation property \eqref{BSE:SEPPROP}. As we also have $(u,v)\in \mathcal{H}^2$, we infer that $(F_1^\prime(u), G_1^\prime(v)) \in \mathcal{H}^2$ with
    \begin{align}
        \label{EST:FPGP:H2}
        \norm{(F_1^\prime(u), G_1^\prime(v))}_{\mathcal{H}^2} 
        \le C \norm{(u,v)}_{\mathcal{H}^2} 
        \le C \norm{(f,g)}_{\mathcal{L}^2}.
    \end{align}    
    In view of $(f,g)\in\mathcal{H}^k$, regularity theory for elliptic systems with bulk-surface coupling (see \cite[Proposition~A.1]{Knopf2024a}) yields $(u,v) \in\mathcal{H}^{k+2}$ along with the estimate
    \begin{align*}
        \norm{(u,v)}_{\mathcal{H}^{k+2}}
        &\le C(k) \norm{(f,g)}_{\mathcal{H}^k} 
            + C(k) \norm{(F_1^\prime(u), G_1^\prime(v))}_{\mathcal{H}^2}.
    \end{align*}
    In view of \eqref{EST:FPGP:H2}, this verifies \eqref{App:Est:Sol:H^k} for any $k\in\{1,2\}$ and thus, the proof is complete. 
\end{proof}

\begin{remark} \label{REM:YOS}
    It is straightforward to check that the statements of Proposition~\ref{App:Prop:Existence}, Proposition~\ref{App:Prop:ContDep}, Proposition~\ref{App:Proposition:Pot:L^p} except for the separation property \eqref{BSE:SEPPROP}, Corollary~\ref{COR:BSE:OP} and Proposition~\ref{App:Proposition:Sol:L^p+H^k} remain true when the potentials $F$ and $G$ are replaced by their respective Moreau--Yosida approximations $F_\lambda$ and $G_\lambda$ with $\lambda>0$ (cf.~Section~\ref{SECT:SUBDIFF}). 
    Although the separation property separation property \eqref{BSE:SEPPROP} cannot be obtained when using the Moreau--Yosida approximations, the corresponding proofs can be carried out similarly. As the potentials $F_\lambda$ and $G_\lambda$ are \mbox{regular}, the proofs do not require any truncation of the functions $u$ and $v$.
    It is crucial that for $F_\lambda$ and $G_\lambda$, the constants in the estimates \eqref{App:Est:Pot:L^p} and \eqref{App:Est:Sol:L^p} are still independent of $\lambda$.
\end{remark}

\section{Existence of weak solutions}\label{SECT:EXWS}

This section is devoted to the proofs of our main results regarding weak solutions, namely Theorem~\ref{THEOREM:EOWS}, Theorem~\ref{THM:TIMEREG} and Theorem~\ref{THEOREM:UNIQUE:SING}.
\begin{remark}\label{REMARK:CONST}
In the remainder of this paper (unless stated otherwise), the letter $C$ will denote generic positive constants that may depend on the prescribed initial data and velocity fields, the domain $\Omega$, the final time $T$, the parameters $K$, $L$, $\alpha$, $\beta$, the constants introduced in \eqref{Assumption:Mobility} and the potentials $F$ and $G$ along with the constants introduced in \eqref{Assumption:Pot:Convexity} and \eqref{Domination:Prime}. The exact value of $C$ may vary throughout the computations. On the other hand, we use $C_0$ to denote generic positive constants, which are independent of the final time $T$ as well as the velocity fields but otherwise depend on the same data as $C$.
\end{remark}

\begin{proof}[Proof of Theorem~\ref{THEOREM:EOWS}]
    The existence of a global-in-time weak solution was proven in \cite[Theorem~3.4]{Knopf2024a} by a compactness method relying on the Yosida regularization of the singular potentials. Throughout this proof, the stronger assumption \eqref{ASS:VEL:WEAK:OLD} was needed as it cannot be ensured that the approximate phase-fields have the property \eqref{PROP:CONF} (or any similar property).
    Nonetheless, as we already know that the weak solution $(\phi,\psi)$ fulfills \eqref{PROP:CONF}, the regularity assumption on the velocity fields can be weakened a posteriori. As most of this can be done by proceeding similarly as in the proof of \cite[Theorem~3.4]{Knopf2024a}, we only outline the most important steps.
    
    Let $(\boldsymbol{v},\boldsymbol{w})\in L^2(0,T;\mathbf{L}^2_\Div(\Om)\times\mathbf{L}^2_\tau(\Ga))$ be the velocity fields. Consider a sequence
    $\{(\boldsymbol{v}_k,\boldsymbol{w}_k)\}_{k\in\N}$ satisfying \eqref{ASS:VEL:WEAK:OLD} such that
    \begin{align}\label{Est:Thm:Eows}
        (\boldsymbol{v}_k,\boldsymbol{w}_k)\rightarrow (\boldsymbol{v},\boldsymbol{w}) \qquad\text{strongly in~} L^2(0,T;\mathcal{L}^2) \text{~as~}k\to \infty.
    \end{align}
    For any $k\in\N$, according to \cite[Theorem~3.4]{Knopf2024a}, there exists an approximating weak solution $(\phi_k,\psi_k,\mu_k,\theta_k)$ to \eqref{EQ:SYSTEM} in the sense of Definition~\ref{DEF:SING:WS} with $(\boldsymbol{v},\boldsymbol{w})$ replaced by $(\boldsymbol{v}_k,\boldsymbol{w}_k)$. Using $\abs{\phi_k} < 1$ a.e.~in $Q$ and $\abs{\psi_k} < 1$ a.e.~on $\Sigma$, we can follow the line of argument in the proof of \cite[Theorem~3.4]{Knopf2024a} to derive the uniform estimate
    \begin{align*}
        \begin{split}
        &\norm{(\phi_k,\psi_k)}_{L^\infty(0,T;\mathcal{H}^1)}
        + \norm{(\phi_k,\psi_k)}_{H^1(0,T;(\mathcal{H}^1_{L,\beta})')}
        \\
        &\quad
        + \norm{(\mu_k,\theta_k)}_{L^2(0,T;\mathcal{H}^1)}
        + \big\|\big(F_1'(\phi_k),G_1'(\psi_k)\big)\big\|_{L^2(0,T;\mathcal{L}^2)}
        \le C,
        \end{split}
    \end{align*}
    where $C$ denotes a non-negative constant that does not depend on $k$.
    Hence, by means of the Banach--Alaoglu theorem and the Aubin--Lions--Simon lemma, we infer that there exist functions $\phi,\psi,\mu,\theta,f,g$ such that
    \begin{alignat*}{2}
        (\phi_k,\psi_k) &\to (\phi,\psi)
        &&\quad\text{weakly-$\ast$ in $L^\infty(0,T;\mathcal{H}^1_{K,\alpha})$,
            weakly in $H^1(0,T;(\mathcal{H}^1_{L,\beta})')$}
        \\
        &&&\quad\text{and strongly in $C([0,T];\mathcal{L}^2)$,}
        \\
        (\mu_k,\theta_k) &\to (\mu,\theta)
        &&\quad\text{weakly in $L^2(0,T;\mathcal{H}^1_{L,\beta})$,}
        \\
        \big(F_1'(\phi_k),G_1'(\psi_k)\big) &\to (f,g)
        &&\quad\text{weakly in $L^2(0,T;\mathcal{L}^2)$,}
    \end{alignat*}    
    as $k\to\infty$, up to subsequence extraction.
    In particular, the quadruplet $(\phi,\psi,\mu,\theta)$ has the regularities demanded in \eqref{REG:SING}. Via the weak-strong convergence principle, we further deduce
    \begin{align*}
        \underset{k\to\infty}{\lim}\,\intO F_1'(\phi_k) \phi_k \dx 
        &= \intO f \phi \dx,
        \\
        \underset{k\to\infty}{\lim}\,\intG G_1'(\psi_k) \psi_k \dx 
        &= \intG g \psi \dG.
    \end{align*}
    As $F_1'$ and $G_1'$ are single-valued maximal monotone operators on their effective domains $D(F_1') = D(G_1') = (-1,1)$, we can use the theory of monotone operators (see \cite[Proposition 1.1, p.~42]{Barbu1976} and also \cite[Section~5.2]{Garcke2017}) to conclude
    \begin{equation*}
        \phi\in D(F_1') = (-1,1)
        \quad\text{a.e.~in $Q$}
        \quad\text{and}\quad
        \psi\in D(G_1') = (-1,1)
        \quad\text{a.e.~on $\Sigma$}
    \end{equation*}
    as well as
    \begin{equation*}
        f = F_1'(\phi)
        \quad\text{a.e.~in $Q$}
        \quad\text{and}\quad
        g = G_1'(\psi)
        \quad\text{a.e.~on $\Sigma$}.
    \end{equation*}
    By the aforementioned convergences, it is now straightforward to pass to the limit in the weak formulation \eqref{SING:WF} and to verify the mass conservation law \eqref{MCL:SING} and the energy inequality \eqref{WEDL:SING}. This proves that the quadruplet $(\phi,\psi,\mu,\theta)$ is a weak solution to the velocity fields $(\boldsymbol{v},\boldsymbol{w})$ in the sense of Definition~\ref{DEF:SING:WS}. As a last step, we have to show that if $L\in(0,\infty]$, any weak solution satisfies the energy equality. To this end, we consider again the convex part of the free energy $\widetilde E_K$ as defined in Chapter~\ref{SECT:SUBDIFF}. Then, according to Lemma~\ref{Lemma:ConvEnergy:Prop}, $\widetilde E_K$ is proper, convex and lower semi-continuous. Owing to \cite[Lemma~4.1]{Rocca2004} in combination with Proposition~\ref{App:Proposition:Subdiff}, we deduce that $[0,T]\ni t\mapsto\widetilde E_K(\phi(t),\psi(t))$ is absolutely continuous and it holds
	\begin{align*}
		\ddt \widetilde E_K(\phi,\psi) &= \bigang{(\delt\phi,\delt\psi)}{(-\Lap\phi + F_1^\prime(\phi), -\Lapg\psi + G_1^\prime(\psi) + \alpha\deln\phi)}_{\mathcal{H}^1} \\
		&= \bigang{(\delt\phi,\delt\psi)}{(\mu - F_2^\prime(\phi), 			\theta - G_2^\prime(\psi))}_{\mathcal{H}^1}
	\end{align*}
	almost everywhere in $(0,T)$. As a consequence, since $F$ is bounded and $\phi\in C([0,T];\mathcal{L}^2)$, the Lebesgue theorem entails that $\intO F(\phi(\cdot))\dx\in C([0,T];\mathcal{L}^2)$, which, in turn, implies that $\phi\in C([0,T];H^1(\Om))$. Analogous reasoning provides that $\psi\in C([0,T];H^1(\Ga))$. Hence, testing \eqref{WF:PP:SING} with $(\mu,\theta)$ and exploiting the standard chain rule in $L^2(0,T;\mathcal{H}^1)\cap H^1(0,T;(\mathcal{H}^1)^\prime)$, we obtain
    \begin{align*}
        &\ddt E_K(\phi,\psi) + \intO \abs{\Grad\mu}^2\dx + \intG\abs{\Gradg\theta}^2\dG + \chi(L)\intG(\beta\theta-\mu)^2\dG \\
        &\quad - \intO \phi\boldsymbol{v}\cdot\Grad\mu\dx - \intG \psi\boldsymbol{w}\cdot\Gradg\theta\dG = 0,
    \end{align*}
    a.e. on $(0,T)$, from which the energy equality follows.
\end{proof}

Next, as a direct consequence of the results established in Section~\ref{SECT:BSE}, we can prove Theorem~\ref{THM:TIMEREG}.

\begin{proof}[Proof of Theorem~\ref{THM:TIMEREG}.]
    The claim directly follows from combining the inequalities obtained in Corollary~\ref{COR:BSE:OP} with the regularities \eqref{REGPP:SING}-\eqref{REGMT:SING} of the considered weak solution.
\end{proof}

Lastly, we present the proof of the continuous dependence estimate.

\begin{proof}[Proof of Theorem~\ref{THEOREM:UNIQUE:SING}.]
    We consider two weak solutions $(\phi_1,\psi_1,\mu_1,\theta_1)$ and $(\phi_2,\psi_2,\mu_2,\theta_2)$ corresponding to the initial data $(\phi_0^1,\psi_0^1), (\phi_0^2,\psi_0^2)\in\mathcal{H}^1_{K,\alpha}$ and the velocity fields 
    \begin{align*}
        (\boldsymbol{v}_1,\boldsymbol{w}_1) \in L^2(0,T;\mathcal{L}^2_\Div)
        \quad\text{and}\quad
        (\boldsymbol{v}_2,\boldsymbol{w}_2) \in L^2(0,T;\mathbf{L}^3_\Div(\Om)\times\mathbf{L}^{2+\omega}_\tau(\Ga)).
    \end{align*}
    Then we set
    \begin{align*}
        (\phi,\psi,\mu,\theta,\boldsymbol{v},\boldsymbol{w}) \coloneqq (\phi_1 - \phi_2, \psi_1 - \psi_2, \mu_1 - \mu_2, \theta_1 - \theta_2, \boldsymbol{v}_1 - \boldsymbol{v}_2, \boldsymbol{w}_1 - \boldsymbol{w}_2).
    \end{align*}
    Then, repeating the line of argument presented in \cite[Theorem~3.5]{Knopf2024} combined with the one in \cite[Theorem~3.6]{Knopf2024a}, we can show that
    \begin{align}\label{Uniq:DiffIneq}
        \begin{split}
            &\ddt\frac12\norm{(\phi,\psi)}_{L,\beta,\ast}^2 + \frac34\norm{(\phi,\psi)}_{K,\alpha}^2\\
            &\quad \leq C\norm{(\phi,\psi)}_{L,\beta,\ast}^2 +  \intO \big(\phi_1\boldsymbol{v} + \phi\boldsymbol{v}_2\big)\cdot\Grad\mathcal{S}_{L,\beta}^\Om(\phi,\psi)\dx + \intG \big(\psi_1\boldsymbol{w} + \psi\boldsymbol{w}_2\big)\cdot\Gradg\mathcal{S}_{L,\beta}^\Ga(\phi,\psi)\dG.
        \end{split}
    \end{align}
    Now, exploiting the facts that $\abs{\phi_1} < 1$ a.e. in $Q$ as well as $\abs{\psi_1} < 1$ a.e. on $\Sigma$, we can estimate the last two terms in \eqref{Uniq:DiffIneq}, using the Sobolev inequality and the bulk-surface Poincar\'{e} inequality, as
    \begin{align}\label{Uniq:Est:Conv}
        \begin{split}
            &\intO \big(\phi_1\boldsymbol{v} + \phi\boldsymbol{v}_2\big)\cdot\Grad\mathcal{S}_{L,\beta}^\Om(\phi,\psi)\dx + \intG \big(\psi_1\boldsymbol{w} + \psi\boldsymbol{w}_2\big)\cdot\Gradg\mathcal{S}_{L,\beta}^\Ga(\phi,\psi)\dG \\
            &\quad\leq\norm{(\boldsymbol{v},\boldsymbol{w})}_{\mathcal{L}^2}\norm{\mathcal{S}_{L,\beta}(\phi,\psi)}_{L,\beta,\ast} + C\norm{(\phi,\psi)}_{K,\alpha}\norm{(\boldsymbol{v}_2,\boldsymbol{w}_2)}_{\mathbf{L}^3(\Om)\times\mathbf{L}^{2+\omega}(\Ga)}\norm{\mathcal{S}_{L,\beta}(\phi,\psi)}_{L,\beta,\ast} \\
            &\quad\leq \frac12\norm{(\boldsymbol{v},\boldsymbol{w})}_{\mathcal{L}^2} + \frac14\norm{(\phi,\psi)}_{K,\alpha}^2 + C_0\big(1 + \norm{(\boldsymbol{v}_2,\boldsymbol{w}_2)}_{\mathbf{L}^3(\Om)\times\mathbf{L}^{2+\omega}(\Ga)}^2\big)\norm{\mathcal{S}_{L,\beta}(\phi,\psi)}_{L,\beta,\ast}^2
        \end{split}
    \end{align}
    Combining the two estimates \eqref{Uniq:DiffIneq}-\eqref{Uniq:Est:Conv}, an application of Gronwall's lemma readily yields the claim.
\end{proof}

\section{Higher regularity and strong solutions}
\label{SECT:HIGHREG}

In this final section, we present the proof of Theorem~\ref{Theorem:HighReg}.

\begin{proof}[Proof of Theorem~\ref{Theorem:HighReg}.]
As the mobility functions are constant, we assume, without loss of generality, that $m_\Om \equiv 1$ and $m_\Ga \equiv 1$.
The proof is divided into several steps. 

\textbf{Step 1: Approximation of the singular potentials.} 
We introduce families of regular potentials $F_\lambda$ and $G_\lambda$ to approximate the singular potentials $F$ and $G$, respectively. To be precise, for $\lambda>0$, we set
\begin{align*}
    F_\lambda(s) = F_{1,\lambda}(s) + F_2(s), \qquad G_\lambda(s) = G_{1,\lambda}(s) + G_2(s), \qquad\text{for all~}s\in\R,
\end{align*}
where, as in Section~\ref{SECT:SUBDIFF}, for any $\lambda\in(0,\infty)$, $F_{1,\lambda}$ and $G_{1,\lambda}$ denote the Moreau--Yosida regularizations of the singular parts $F_1$ and $G_1$, respectively. We recall from Section~\ref{SECT:SUBDIFF} that for any $\lambda>0$, $F_{1,\lambda}$ and $G_{1,\lambda}$ have the properties \ref{Yosida:Reg}--\ref{Yosida:Convergence}.
In addition, as discussed in Section~\ref{SECT:SUBDIFF} (see \eqref{DOMINATION:YOSIDA}), it holds that
\begin{align}\label{Yosida:Domination}
    \abs{F_{1,\lambda}^\prime(\alpha r)} \leq \kappa_1 \abs{G_{1,\lambda}^\prime(r)} + \kappa_2 
\end{align}
for all $r\in\R$ and every $\lambda>0$.
Moreover, the assumptions on $F_2$ together with \ref{Yosida:Growth} imply that there exist constants $C_1,C_2 > 0$, which comply with the convention stated in Remark~\ref{REMARK:CONST}, such that
\begin{align}\label{Pot:Approx:Growth}
    F_\lambda(r) \geq C_1 r^2 - C_2 \qquad\text{for all~}r\in\R \text{~and~}\lambda\in(0,\overline\lambda),
\end{align}
where $\overline\lambda$ is chosen as in \ref{Yosida:Growth}. An analogous estimate holds for $G_{1,\lambda}$, and we write $\lambda_\ast>0$ to denote the common threshold such that \eqref{Pot:Approx:Growth} holds for both $F_\lambda$ and $G_\lambda$. Therefore, in the following, we simply consider $\lambda\in (0,\lambda_*)$.

\textbf{Step 2: Approximation of the initial datum.} We approximate the initial condition $(\phi_0,\psi_0)$ in a suitable way such that the approximation is compatible with the Yosida approximation of the potentials. To this end, we consider
\begin{subequations}\label{Approx:IC:lambda}
    \begin{alignat}{2}
        -\Lap\phi_{0,\lambda}  + F_{1,\lambda}^\prime(\phi_{0,\lambda}) &= \mu_0 - F_2^\prime(\phi_0) &&\qquad\text{in~}\Om, \\
        -\Lapg\psi_{0,\lambda} + G_{1,\lambda}^\prime(\psi_{0,\lambda}) + \alpha\deln\phi_{0,\lambda} &= \theta_0 - G_2^\prime(\psi_0) &&\qquad\text{on~}\Ga, \\
        K\deln\phi_{0,\lambda} &= \alpha\psi_{0,\lambda} - \phi_{0,\lambda} &&\qquad\text{on~}\Ga.
    \end{alignat}
\end{subequations}
According to Proposition~\ref{App:Prop:Existence}, Proposition~\ref{App:Proposition:Sol:L^p+H^k} and Remark~\ref{REM:YOS}, this system possesses a unique strong solution $(\phi_{0,\lambda},\psi_{0,\lambda})\in\mathcal{W}^{2,6}\cap\mathcal{H}^1_{K,\alpha}$. Moreover, there exists a constant $C_{\mathrm{init}} > 0$, that is independent of $\lambda$, such that
\begin{align}\label{Est:Init:PP}
    \norm{(\phi_{0,\lambda},\psi_{0,\lambda})}_{\mathcal{W}^{2,6}} + \norm{(F_{1,\lambda}^\prime(\phi_{0,\lambda}),G_{1,\lambda}^\prime(\psi_{0,\lambda}))}_{\mathcal{L}^6} \leq C_{\mathrm{init}}.
\end{align}
In particular, there exists a pair $(\tilde\phi_0,\tilde\psi_0)\in\mathcal{W}^{2,6}$ such that
\begin{alignat}{2}
    (\phi_{0,\lambda},\psi_{0,\lambda}) &\rightarrow (\tilde\phi_0,\tilde\psi_0) &&\qquad\text{weakly in~}\mathcal{W}^{2,6}, \text{~strongly in~}\mathcal{H}^1_{K,\alpha}, \label{Conv:Init:PP}\\
    (F_{1,\lambda}^\prime(\phi_{0,\lambda}),G_{1,\lambda}^\prime(\psi_{0,\lambda})) &\rightarrow (F_1^\prime(\tilde\phi_0),G_1^\prime(\tilde\psi_0)) &&\qquad\text{weakly in~}\mathcal{L}^6 \label{Conv:Init:Pot}
\end{alignat}
as $\lambda\rightarrow 0$ after possibly extracting a subsequence. Now, we define
\begin{align}
    \label{Def:Approx:M:0:lambda}
    \mu_{0,\lambda} &\coloneqq \mu_0 - F_2^\prime(\phi_0) + F_2^\prime(\phi_{0,\lambda}), 
    \\
    \label{Def:Approx:T:0:lambda}
    \theta_{0,\lambda} &\coloneqq \theta_0 - G_2^\prime(\psi_0) + G_2^\prime(\psi_{0,\lambda}).
\end{align}
This means that $(\phi_{0,\lambda},\psi_{0,\lambda})$ solves
\begin{subequations}\label{Approx:IC:lambda:2}
    \begin{alignat}{2}
        -\Lap\phi_{0,\lambda}  + F_\lambda^\prime(\phi_{0,\lambda}) &= \mu_{0,\lambda} &&\qquad\text{in~}\Om, \\
        -\Lapg\psi_{0,\lambda} + G_\lambda^\prime(\psi_{0,\lambda}) + \alpha\deln\phi_{0,\lambda} &= \theta_{0,\lambda} &&\qquad\text{on~}\Ga, \\
        K\deln\phi_{0,\lambda} &= \alpha\psi_{0,\lambda} - \phi_{0,\lambda} &&\qquad\text{on~}\Ga.
    \end{alignat}
\end{subequations}
Then, due to \eqref{Conv:Init:PP} and the Lipschitz continuity of $F_2^\prime$ and $G_2^\prime$, respectively, it holds that
\begin{align}
    (\mu_{0,\lambda},\theta_{0,\lambda}) \rightarrow (\mu_0,\theta_0) \qquad\text{weakly in~}\mathcal{H}^1, \text{~strongly in~}\mathcal{L}^2 \label{Conv:Init:MT}
\end{align}
as $\lambda\rightarrow 0$. In view of the convergences \eqref{Conv:Init:PP}, \eqref{Conv:Init:Pot} and \eqref{Conv:Init:MT}, it is straightforward to check that
\begin{align*}
    &\intO \Grad\tilde\phi_0\cdot\Grad\eta + F^\prime(\tilde\phi_0)\eta\dx + \intG \Gradg\tilde\psi_0\cdot\Gradg\vartheta + G^\prime(\tilde\psi_0)\vartheta\dG \\
    &\qquad + \sigma(K)\intG (\alpha\tilde\psi_0 - \tilde\phi_0)(\alpha\vartheta - \eta)\dG \\
    &\quad = \intO \mu_0\eta\dx + \intG \theta_0\vartheta\dG
\end{align*}
for all $(\eta,\vartheta)\in\mathcal{H}^1_{K,\alpha}$. 
In view of Assumption~\ref{cond:MT:0}, we thus know that both $(\tilde\phi_0,\tilde\psi_0)$ and $(\phi_0,\psi_0)$ are a weak solution of System~\ref{BSE} with $(f,g) = (\mu_0,\theta_0)$.
Hence, Theorem~\ref{App:Prop:ContDep} implies that
\begin{align}
    \label{ID:INI}
    (\tilde\phi_0,\tilde\psi_0) = (\phi_0,\psi_0) \qquad\text{a.e. in~}\Om\times\Ga.
\end{align}
Furthermore, thanks to this identification and the strong $\mathcal{H}^1$-convergence \eqref{Conv:Init:PP}, there exist $\overline{m}\in(0,1)$, which is independent of $\lambda$, such that, after possibly updating the value of $\lambda_\ast$, it holds
\begin{align}\label{ApproxIC:mean:L}
    \abs{\beta\mean{\phi_{0,\lambda}}{\psi_{0,\lambda}}} \leq \overline m < 1, \qquad \abs{\mean{\phi_{0,\lambda}}{\psi_{0,\lambda}}} \leq \overline m < 1 \qquad\text{for all~} 0 < \lambda < \lambda_\ast
\end{align}
if $L\in(0,\infty)$, and
\begin{align}\label{ApproxIC:mean:inf}
    \abs{\meano{\phi_{0,\lambda}}} \leq \overline m < 1, \qquad \abs{\meang{\psi_{0,\lambda}}} \leq \overline m < 1 \qquad\text{for all~} 0 < \lambda < \lambda_\ast
\end{align}
if $L = \infty$.
Additionally, as we have the uniform convergence $(\phi_{0,\lambda},\psi_{0,\lambda})\rightarrow (\phi_0,\psi_0)$ as $\lambda\rightarrow 0$ and knowing that $\norm{\phi_0}_{L^\infty(\Om)} \leq 1$ as well as $\norm{\psi_0}_{L^\infty(\Ga)} \leq 1$, we infer that 
\begin{align}\label{Est:Approx:Init:infty}
    \norm{\phi_{0,\lambda}}_{L^\infty(\Om)} \leq 2, \qquad \norm{\psi_{0,\lambda}}_{L^\infty(\Ga)} \leq 2 \qquad\text{for all~}0 < \lambda < \lambda_\ast
\end{align}
after a possible redefinition of $\lambda_\ast$.

\textbf{Step 3: Approximation of the velocity fields.} As the last step of the approximation procedure, we regularize the velocity fields. To this end, we start by extending $\boldsymbol{v}$ and $\boldsymbol{w}$ in time by zero to obtain $(\widetilde{\boldsymbol{v}},\widetilde{\boldsymbol{w}})\in L^\infty(\R;\mathcal{L}^2_\Div)\cap L^2(\R;\mathcal{H}^1)$.
We now choose a mollifier $\rho\in C^\infty_c(\R;[0,\infty))$ with $\norm{\rho}_{L^1(\R)}=1$, and for any $\lambda\in(0,\lambda_*)$, we set $\rho_\lambda := \lambda^{-1} \rho(\lambda^{-1}\, \cdot)$. We further define $\boldsymbol{v}_\lambda = \widetilde{\boldsymbol{v}}\ast_t\rho_\lambda$ and $\boldsymbol{w}_\lambda = \widetilde{\boldsymbol{w}}\ast_t\rho_\lambda$, where $\ast_t$ denotes the convolution with respect to the time variable. This provides a sequence $\{(\boldsymbol{v}_\lambda,\boldsymbol{w}_\lambda)\}_{\lambda\in(0,\lambda_*)}$ such that $(\boldsymbol{v}_\lambda,\boldsymbol{w}_\lambda)\in C^\infty(\R;\mathcal{H}^1)$ and
\begin{align}\label{Conv:vw:ep}
    \begin{split}
        (\boldsymbol{v}_\lambda,\boldsymbol{w}_\lambda) \rightarrow (\boldsymbol{v},\boldsymbol{w}) &\qquad\text{strongly in~} L^2(0,T;\mathcal{H}^1), \\
        &\qquad\text{weakly-$\ast$ in~} L^\infty(0,T;\mathcal{L}^2),
    \end{split}
\end{align}
as $\lambda\rightarrow 0$, up to subsequence extraction.
For more details, we refer to \cite[S.~4.13-4.15]{Alt2016}.
In particular, according to \cite[S.~4.13 (2)]{Alt2016}, we have
\begin{align}\label{Est:vw:ep}
    \norm{(\boldsymbol{v}_\lambda,\boldsymbol{w}_\lambda)}_{L^\infty(0,T;\mathcal{L}^2)\cap L^2(0,T;\mathcal{H}^1)} 
    \leq \norm{(\boldsymbol{v},\boldsymbol{w})}_{L^\infty(0,T;\mathcal{L}^2)\cap L^2(0,T;\mathcal{H}^1)} 
\end{align}
for all $\lambda\in(0,\lambda_*)$.
We further point out that the approximate velocity fields are still divergence-free and preserve the trace relation $\boldsymbol{v}_\lambda\vert_\Ga = \boldsymbol{w}_\lambda$ a.e.~on $\Sigma$ if $K = 0$ as the convolution only affects the time variable. 

\textbf{Step 4: The approximate system.}
Let now $\lambda\in(0,\lambda_\ast)$ be arbitrary. Then, in view of \cite[Theorem 3.2]{Knopf2024}, there exists a unique quadruplet $(\phi_\lambda, \psi_\lambda, \mu_\lambda,\theta_\lambda)$ such that
\begin{align*}
    (\phi_\lambda,\psi_\lambda)&\in C([0,T];\mathcal{L}^2)\cap H^1(0,T;(\mathcal{H}^1)^\prime)\cap L^\infty(0,T;\mathcal{H}^1_{K,\alpha}), \\
    (\mu_\lambda,\theta_\lambda)&\in L^2(0,T;\mathcal{H}^1),
\end{align*}
which solves
\begin{subequations}\label{Approx:Problem}
    \begin{align}
        \label{Approx:Problem:1}
        &\delt\phi_\lambda + \Div(\phi_\lambda\boldsymbol{v}_\lambda) = \Lap\mu_\lambda && \text{in} \ Q, \\
        \label{Approx:Problem:2}
        &\mu_\lambda = -\Lap\phi_\lambda + F_\lambda'(\phi_\lambda)   && \text{in} \ Q, \\
        \label{Approx:Problem:3}
        &\delt\psi_\lambda + \Divg(\psi_\lambda\boldsymbol{w}_\lambda) = \Lapg\theta_\lambda - \beta \deln\mu_\lambda && \text{on} \ \Sigma, \\
        \label{Approx:Problem:4}
        &\theta_\lambda = - \Lapg\psi_\lambda + G_\lambda'(\psi_\lambda) + \alpha\deln\phi_\lambda && \text{on} \ \Sigma, \\
        \label{Approx:Problem:5}
        &K\deln\phi_\lambda = \alpha\psi_\lambda - \phi_\lambda \qquad\text{for~}K\in [0,\infty) && \text{on} \ \Sigma, \\
        \label{Approx:ProblemM:6}
        &\begin{cases} 
        L \deln\mu_\lambda = \beta\theta_\lambda - \mu_\lambda &\text{if} \  L\in(0,\infty), \\
        \deln\mu_\lambda = 0 &\text{if} \ L=\infty
        \end{cases} &&\text{on} \ \Sigma, \\
        \label{Approx:Problem:7}
        &\phi_\lambda\vert_{t=0} = \phi_{0,\lambda} &&\text{in} \ \Om, \\
        \label{Approx:Problem:8}
        &\psi_\lambda\vert_{t=0} = \psi_{0,\lambda} &&\text{on} \ \Ga,
    \end{align}
\end{subequations}
in the sense that it satisfies the weak formulation \eqref{WF:PP:SING}-\eqref{WF:MT:SING} with $F$ and $G$ replaced by $F_\lambda$ and $G_\lambda$, respectively, and $\boldsymbol{v}$ and $\boldsymbol{w}$ by $\boldsymbol{v}_\lambda$ and $\boldsymbol{w}_\lambda$, respectively. Furthermore, the solution satisfies the mass conservation law \eqref{MCL:SING} as well as the energy inequality
\begin{align}\label{EI:ApproxProblem}
    \begin{split}
        &E_{K,\lambda}(\phi_\lambda(t),\psi_\lambda(t)) + \int_0^t\intO \abs{\Grad\mu_\lambda}^2\dxs + \int_0^t\intG \abs{\Gradg\theta_\lambda}^2\dGs \\
        &\quad + \sigma(L) \int_0^t\intG (\beta\theta_\lambda-\mu_\lambda)^2\dGs \\
        &\leq E_{K,\lambda}(\phi_{0,\lambda},\psi_{0,\lambda}) + \int_0^t\intO \phi_\lambda\boldsymbol{v}_\lambda\cdot\Grad\mu_\lambda \dxs + \int_0^t\intG \psi_\lambda\boldsymbol{w}_\lambda\cdot\Gradg\theta_\lambda\dGs
    \end{split}
\end{align}
for all $t\in[0,T]$, where the approximate energy $E_{K,\lambda}$ is given by
\begin{align}
    \label{ApproxEnergy}
    \begin{split}
        E_{K,\lambda}(\zeta,\xi) &= \intO \frac12 \abs{\Grad\zeta}^2 + F_\lambda(\zeta)\dx + \intG \frac12 \abs{\Gradg\xi}^2 + G_\lambda(\xi)\dG + \sigma(K) \intG\frac12 (\alpha\xi - \zeta)^2 \dG.
    \end{split}
\end{align}
To control the approximated initial energy in \eqref{EI:ApproxProblem}, we test \eqref{Approx:IC:lambda} with $(\phi_{0,\lambda},\psi_{0,\lambda})$ and obtain
\begin{align*}
    &\norm{(\phi_{0,\lambda},\psi_{0,\lambda})}_{K,\alpha}^2 + \intO F_{1,\lambda}^\prime(\phi_{0,\lambda})\phi_{0,\lambda}\dx + \intG G_{1,\lambda}^\prime(\psi_{0,\lambda})\psi_{0,\lambda}\dG \\
    &\quad = \intO \big(\mu_0 - F_2^\prime(\phi_0)\big)\phi_{0,\lambda}\dx + \intG \big(\theta_0 - G_2^\prime(\psi_0)\big)\psi_{0,\lambda}\dG.
\end{align*}
Since by convexity $F_{1,\lambda}^\prime(s)s\geq F_{1,\lambda}(s)$ as well as $G_{1,\lambda}^\prime(s)s\geq G_{1,\lambda}(s)$ for all $s\in\R$, we use the bounds \eqref{Est:Approx:Init:infty} to deduce from the equation above that
\begin{align*}
    \intO F_{1,\lambda}(s)\dx + \intG G_{1,\lambda}(s)\dG \leq C\big(1 + \norm{(\mu_0,\theta_0)}_{L,\beta}^2\big),
\end{align*}
and readily find that
\begin{align}\label{EST:Energy:INIT}
    E_{K,\lambda}(\phi_{0,\lambda},\psi_{0,\lambda}) 
    \leq E_{K}(\phi_{0,\lambda},\psi_{0,\lambda}) 
    \leq C\big(1 + \norm{(\phi_0,\psi_0)}_{K,\alpha}^2 + \norm{(\mu_0,\theta_0)}_{L,\beta}^2\big).
\end{align}
Furthermore, we bound the convective terms on the right-hand side of \eqref{EI:ApproxProblem} by Hölder's, Young's and Sobolev's inequalities. Namely, it holds that
\begin{align}
    \label{Est:ApproxProblem:Convective}
    \begin{split}
        &\abs{\int_0^t\intO \phi_\lambda\boldsymbol{v}_\lambda\cdot\Grad\mu_\lambda \dxs + \int_0^t\intG \psi_\lambda\boldsymbol{w}_\lambda\cdot\Gradg\theta_\lambda\dGs} \\
        &\quad\leq \frac{1}{2}\int_0^t\intO \abs{\Grad\mu_\lambda}^2\dxs + \frac{1}{2}\int_0^t\intG \abs{\Gradg\theta_\lambda}^2\dGs \\
        &\qquad + \frac12\int_0^t \norm{(\boldsymbol{v}_\lambda, \boldsymbol{w}_\lambda)}_{\mathcal{L}^3}^2\norm{(\phi_\lambda, \psi_\lambda)}_{\mathcal{H}^1}^2\ds.
    \end{split}
\end{align}
Next, in view of \eqref{Pot:Approx:Growth}, we find that
\begin{align}\label{Est:Appl:Growth}
    \intO F_\lambda(\phi_\lambda)\dx + \intG G_\lambda(\psi_\lambda)\dG \geq C\norm{(\phi_\lambda,\psi_\lambda)}_{\mathcal{L}^2}^2 - C
\end{align}
since $\lambda\in(0,\lambda_\ast)$. This implies that
\begin{align}\label{Est:Appl:Growth:2}
    c\,\norm{(\phi_\lambda,\psi_\lambda)}_{\mathcal{H}^1}^2 &\leq C + \frac12\norm{(\Grad\phi_\lambda,\Gradg\psi_\lambda)}_{\mathcal{L}^2}^2 + \intO F_\lambda(\phi_\lambda)\dx + \intG G_\lambda(\psi_\lambda) \dG.
\end{align}
for some constant $c>0$.
Collecting \eqref{EST:Energy:INIT}, \eqref{Est:ApproxProblem:Convective} and \eqref{Est:Appl:Growth:2}, we infer from \eqref{EI:ApproxProblem} that
\begin{align}\label{Est:ApproxProblem:PreGronwall} 
    \begin{split}
        &c\norm{(\phi_\lambda,\psi_\lambda)}_{\mathcal{H}^1}^2 + \frac{1}{2}\int_0^t\intO \abs{\Grad\mu_\lambda}^2\dxs + \frac{1}{2}\int_0^t\intG \abs{\Gradg\theta_\lambda}^2\dGs \\
        &\qquad + \sigma(L) \int_0^t\intG (\beta\theta_\lambda - \mu_\lambda)^2\dGs \\
        &\quad\leq C + C\int_0^t \norm{(\boldsymbol{v}_\lambda, \boldsymbol{w}_\lambda)}_{\mathcal{L}^3}^2\norm{(\phi_\lambda, \psi_\lambda)}_{\mathcal{H}^1}^2\ds.
    \end{split}
\end{align}
Hence, an application of Gronwall's lemma together with \eqref{Est:vw:ep} readily yields that
\begin{align}
    \label{Est:ApproxProblem:Gronwall}
    \begin{split}
        \norm{(\phi_\lambda,\psi_\lambda)}_{\mathcal{H}^1}^2 &\leq C \exp\Big(C\int_0^T \norm{(\boldsymbol{v}_\lambda, \boldsymbol{w}_\lambda)}_{\mathcal{L}^3}^2\ds\Big) \leq C.
    \end{split}
\end{align}
Recalling that $F_\lambda$ and $G_\lambda$ are bounded from below (cf.~\eqref{Pot:Approx:Growth}), we deduce from \eqref{Est:ApproxProblem:PreGronwall} and \eqref{Est:ApproxProblem:Gronwall} that
\begin{align}
    \label{Est:ApproxProblem:uniform:1}
    \begin{split}
        &\norm{(\phi_\lambda,\psi_\lambda)}_{L^\infty(0,T;\mathcal{H}^1)}^2 + \norm{(F_\lambda(\phi_\lambda),G_\lambda(\psi_\lambda))}_{L^\infty(0,T;\mathcal{L}^1)} + \sigma(K) \norm{\alpha\psi_\lambda - \phi_\lambda}_{L^\infty(0,T;L^2(\Ga))}^2 \\
        &\quad + \norm{(\Grad\mu_\lambda, \Gradg\theta_\lambda)}_{L^2(0,T;\mathcal{L}^2)}^2 + \sigma(L)\norm{\beta\theta_\lambda - \mu_\lambda}_{L^2(0,T;L^2(\Ga))}^2 \leq C.
    \end{split}
\end{align}
In order to find an $\mathcal{H}^1$ 
-estimate on the chemical potentials $\mu_\lambda$ and $\theta_\lambda$, we only consider the case $L\in(0,\infty)$. The proof in the case $L = \infty$ works similarly with obvious modifications. First, we recall the inequality 
\begin{align}
    \label{Est:ApproxProblem:MZ}
    \begin{split}
        & c_1\norm{F_{1,\lambda}^\prime(\phi_\lambda)}_{L^1(\Om)} + c_2\norm{G_{1,\lambda}^\prime(\psi_\lambda)}_{L^1(\Ga)} - c_3 \\
        &\quad\leq \intO F_{1,\lambda}^\prime(\phi_\lambda)(\phi_\lambda - \beta\mean{\phi_\lambda}{\psi_\lambda})\dx + \intG G_{1,\lambda}^\prime(\psi_\lambda)(\psi_\lambda - \mean{\phi_\lambda}{\psi_\lambda})\dG,
    \end{split}
\end{align}
(see \cite[Proposition~A.1]{Miranville2004} or \cite[p.908]{Gilardi2009}),
where $c_1,c_2$ are positive constants and $c_3$ is non-negative. Note that these constants are independent of $\lambda$. Here, we used again the mass conservation law, \eqref{ApproxIC:mean:L} and \eqref{ApproxIC:mean:inf}. Using \eqref{Est:ApproxProblem:MZ}, we can argue analogously to the proof of \cite[Theorem 3.4]{Knopf2024a} to deduce the estimate
\begin{align}
    \label{Est:ApproxProblem:Pot:L^1:pre}
    \norm{(F_{1,\lambda}^\prime(\phi_\lambda), G_{1,\lambda}^\prime(\psi_\lambda))}_{\mathcal{L}^1} \leq C\big(1 + \norm{(\mu_\lambda,\theta_\lambda)}_{L,\beta}\big) + C\gamma(K)\norm{\deln\phi_\lambda}_{L^2(\Ga)},
\end{align}
where, as before, $\gamma(K) = \mathbf{1}_{\{0\}}(K)$ denotes the characteristic function of the set $\{0\}$.

Next, we derive $L^2$-bounds on $F_{1,\lambda}^\prime(\phi_\lambda)$ and $G_{1,\lambda}^\prime(\psi_\lambda)$. Invoking the Lipschitz continuity of $F_2^\prime$ and $G_2^\prime$ as well as \eqref{Est:ApproxProblem:uniform:1} and \eqref{Est:ApproxProblem:Pot:L^1:pre}, we deduce that
\begin{align}
    \label{Est:ApproxProblem:Pot:L^1:K}
    \norm{(F_\lambda^\prime(\phi_\lambda), G_\lambda^\prime(\psi_\lambda))}_{\mathcal{L}^1} \leq C\big(1 + \norm{(\mu_\lambda,\theta_\lambda)}_{L,\beta}\big) + C\gamma(K)\norm{\deln\phi_\lambda}_{L^2(\Ga)}.
\end{align}
Thus, testing \eqref{WF:MT:SING} with $(\beta^2\abs{\Om} + \abs{\Ga})^{-1}(\beta,1)$, we deduce from \eqref{Est:ApproxProblem:uniform:1} and \eqref{Est:ApproxProblem:Pot:L^1:K} that
\begin{align}
    \label{Est:ApproxProblem:mean:MT:K}
    \abs{\mean{\mu_\lambda}{\theta_\lambda}} \leq C\big(1 + \norm{(\mu_\lambda,\theta_\lambda)}_{L,\beta}\big) + C\gamma(K)\norm{\deln\phi_\lambda}_{L^2(\Ga)}.
\end{align}
Consequently, using \eqref{Est:ApproxProblem:mean:MT:K} together with the bulk-surface Poincar\'{e} inequality (Lemma~\ref{Prelim:Poincare}), we infer that
\begin{align}
    \label{Est:ApproxProblem:MT:L^2:K}
    \norm{(\mu_\lambda,\theta_\lambda)}_{\mathcal{H}^1} \leq C\big(1 + \norm{(\mu_\lambda,\theta_\lambda)}_{L,\beta}\big) + C\gamma(K)\norm{\deln\phi_\lambda}_{L^2(\Ga)}.
\end{align}
Now, according to Proposition~\ref{App:Proposition:Sol:L^p+H^k} and Remark~\ref{REM:YOS} we have
\begin{align}\label{Est:ApproxProblem:PP:POT:L^6:1}
    \norm{(\phi_\lambda,\psi_\lambda)}_{\mathcal{W}^{2,6}} + \norm{(F_{1,\lambda}^\prime(\phi_\lambda),G_{1,\lambda}^\prime(\psi_\lambda))}_{\mathcal{L}^6} \leq C\big(1 + \norm{(\mu_\lambda - F_2^\prime(\phi_\lambda),\theta_\lambda - G_2^\prime(\psi_\lambda))}_{\mathcal{H}^1}\big).
\end{align}
In view of the Lipschitz continuity of $F_2^\prime$ and $G_2^\prime$, \eqref{Est:ApproxProblem:Gronwall} and \eqref{Est:ApproxProblem:MT:L^2:K}, we deduce from \eqref{Est:ApproxProblem:PP:POT:L^6:1} that
\begin{align}\label{Est:ApproxProblem:PP:POT:L^6:2}
    \norm{(\phi_\lambda,\psi_\lambda)}_{\mathcal{W}^{2,6}} + \norm{(F_{1,\lambda}^\prime(\phi_\lambda),G_{1,\lambda}^\prime(\psi_\lambda))}_{\mathcal{L}^6} \leq C\big(1 + \norm{(\mu_\lambda,\theta_\lambda)}_{L,\beta} + \gamma(K)\norm{\deln\phi_\lambda}_{L^2(\Ga)}\big)
\end{align}
Lastly, we establish a uniform $L^2$-bound on $\deln\phi_\lambda$. To this end, by the trace theorem (see, e.g., \cite[Chapter 2, Theorem 2.24]{Brezzi1987}), we have
\begin{align}\label{Est:ApproxProblem:Trace}
    \norm{\deln\phi_\lambda}_{L^2(\Ga)} \leq C\norm{\phi_\lambda}_{H^{7/4}(\Om)}.
\end{align}
Thus, invoking\eqref{Est:ApproxProblem:PP:POT:L^6:2}-\eqref{Est:ApproxProblem:Trace} and applying the compact embeddings $W^{2,6}(\Om)\emb H^{7/4}(\Om) \emb L^2(\Om)$ along with Ehrling's lemma, we obtain 
\begin{align}\label{Est:Approx:H^2+trace:Phi}
    \begin{split}
        &\norm{(\phi_\lambda,\psi_\lambda)}_{\mathcal{W}^{2,6}} + \norm{\deln\phi_\lambda}_{L^2(\Ga)}  \\
        &\quad\leq C\big(1 + \norm{(\mu_\lambda,\theta_\lambda)}_{L,\beta}\big) + C\norm{\phi_\lambda}_{H^{7/4}(\Om)} \\
        &\quad\leq \frac12\norm{\phi_\lambda}_{W^{2,6}(\Om)} + C\big(1 + \norm{(\mu_\lambda,\theta_\lambda)}_{L,\beta}\big).
    \end{split}
\end{align}
Whence, absorbing the respective norm, we infer that
\begin{align}\label{Est:ApproxProblem:PP:W^26}
    \norm{(\phi_\lambda,\psi_\lambda)}_{\mathcal{W}^{2,6}} + \norm{\deln\phi_\lambda}_{L^2(\Ga)}\leq C\big(1 + \norm{(\mu_\lambda,\theta_\lambda)}_{L,\beta}\big),
\end{align}
and thus, in view of \eqref{Est:ApproxProblem:MT:L^2:K} and \eqref{Est:ApproxProblem:PP:POT:L^6:1}, we conclude that
\begin{align}\label{Est:ApproxProblem:MT:POT:H^1:L^6}
    \norm{(F_{1,\lambda}^\prime(\phi_\lambda), G_{1,\lambda}^\prime(\psi_\lambda))}_{\mathcal{L}^6} + \norm{(\mu_\lambda, \theta_\lambda)}_{\mathcal{H}^1} \leq C\big(1 + \norm{(\mu_\lambda,\theta_\lambda)}_{L,\beta}\big).
\end{align}

\textbf{Step 5: Higher order regularity.} As the approximate velocity fields $\boldsymbol{v}_\lambda$ and $\boldsymbol{w}_\lambda$ fulfill the regularity assumption \eqref{ASS:VEL:STRONG:OLD} that was used in \cite[Theorem 3.7]{Knopf2024a}, we can adapt the proof to conclude that
\begin{subequations}\label{REG:STRG:ep}
    \begin{align}
        \scp{\delt\phi_\lambda}{\delt\psi_\lambda}&\in L^\infty(0,T;(\mathcal{H}^1)^\prime)\cap L^2(0,T;\mathcal{H}^1), \\
        \label{REG:PP:W26inf}
        \scp{\phi_\lambda}{\psi_\lambda}&\in L^\infty(0,T;\mathcal{W}^{2,6})\cap\Big( C\big(\overline{Q}\big)\times C\big(\overline{\Sigma}\big)\Big), \\
        \label{REG:MT:H1inf}
        \scp{\mu_\lambda}{\theta_\lambda}&\in L^\infty(0,T;\mathcal{H}^1)\cap L^2(0,T;\mathcal{H}^2), \\
        \scp{F_\lambda^\prime(\phi_\lambda)}{G_\lambda^\prime(\psi_\lambda)}&\in L^2(0,T;\mathcal{L}^\infty) \cap L^\infty(0,T;\mathcal{L}^6).
    \end{align}
   \end{subequations}
In addition, $(\phi_\lambda, \psi_\lambda, \mu_\lambda, \theta_\lambda)$ satisfies the equations
\begin{subequations}\label{STRG:PP:EP}
    \begin{alignat}{2}
        \label{STR:PHI:EP}
        &\delt\phi_\lambda + \Div(\phi_\lambda\boldsymbol{v}_\lambda) = \Lap\mu_\lambda &&\qquad\text{a.e.~in~}Q, \\
        \label{STR:PSI:EP}
        &\delt\psi_\lambda + \Divg(\psi_\lambda\boldsymbol{w}_\lambda) = \Lapg\theta_\lambda - \beta\deln\mu_\lambda&&\qquad\text{a.e.~on~}\Sigma, \\
        &\begin{cases}
            L\deln\mu_\lambda = \beta\theta_\lambda - \mu_\lambda & \text{if~}L\in(0,\infty), \\
            \deln\mu_\lambda = 0 & \text{if~}L = \infty
        \end{cases}
        &&\qquad\text{a.e.~on~}\Sigma.
    \end{alignat}
\end{subequations}
We now prove several higher regularity results, which are not uniform with respect to $\lambda\in(0,\lambda_\ast)$, but are crucial for our analysis to prove the desired uniform estimates. 

\textbf{Step 5.1:} First, we prove that $(\delt\mu_\lambda,\delt\theta_\lambda)$ exists and belongs to $L^2(0,T;(\mathcal{H}^1_{K,\alpha})^\prime)$. To this end, we take the difference quotient of \eqref{Approx:Problem:2}, \eqref{Approx:Problem:4} and \eqref{Approx:Problem:5}, respectively. Then we have
\begin{subequations}\label{STRG:DELTH:lambda}
    \begin{align}
        \delt^h\mu_\lambda &= -\Lap\delt^h\phi_\lambda + \delt^h [F_\lambda^\prime(\phi_\lambda)] \qquad &&\text{a.e.~in~} Q, \label{STRG:DELTH:MU:lambda}\\
        \delt^h\theta_\lambda &= - \Lapg\delt^h\psi_\lambda + \delt^h [G_\lambda^\prime(\psi_\lambda)] + \alpha\deln\delt^h\phi_\lambda \qquad &&\text{a.e.~on~} \Sigma, \label{STRG:DELTH:THETA:lambda}\\
        K\deln\delt^h\phi_\lambda &= \alpha\delt^h\psi_\lambda - \delt^h\phi_\lambda &&  \text{a.e.~on } \Sigma,
    \end{align}
\end{subequations}
for $K\in[0,\infty)$. Here, the difference quotient of a function $f$ at time $t$ is defined as $\delth f(t) = \frac1h(f(t+h) - f(t))$ for $h\in(0,1)$.
Let now $(\eta,\vartheta)\in\mathcal{H}^1_{K,\alpha}$. Multiplying \eqref{STRG:DELTH:MU:lambda} and \eqref{STRG:DELTH:THETA:lambda} by $\eta$ and $\vartheta$, and integrating over $\Om$ and $\Ga$, respectively, an application of integration by parts yields
\begin{align}\label{WF:MUTHETA:h}
    \begin{split}
        &\ang{(\delth\mu_\lambda, \delth\theta_\lambda)}{(\eta,\vartheta)}_{\mathcal{H}^1_{K,\alpha}} = \intO \delth\mu_\lambda\eta\dx + \intG \delth\theta_\lambda\vartheta\dG \\
        &\quad = \intO \Grad\delth\phi_\lambda\cdot\Grad\eta + \delth [F_\lambda^\prime(\phi_\lambda)]\eta \dx \\
        &\qquad + \intG \Gradg\delth\psi_\lambda\cdot\Gradg\vartheta + \delth [G_\lambda^\prime(\psi_\lambda)]\vartheta \dG \\
        &\qquad + \sigma(K) \intG (\alpha\delth\psi_\lambda - \delth\phi_\lambda)(\alpha\vartheta - \eta) \dG.
    \end{split}
\end{align}
Consequently, we have
\begin{align*}
	&\abs{\bigang{(\delth\mu_\lambda, \delth\theta_\lambda)}{(\eta,\vartheta)}_{\mathcal{H}^1_{K,\alpha}}} \\
    &\qquad \leq C\Big(\norm{(\delth\phi_\lambda, \delth\psi_\lambda)}_{K,\alpha} + \norm{(\delth[F_\lambda^\prime(\phi_\lambda)], \delt^h[G_\lambda^\prime(\psi_\lambda)])}_{\mathcal{L}^2}\Big)\norm{(\eta,\vartheta)}_{\mathcal{H}^1}.
\end{align*}
Now, taking the supremum over all $(\eta, \vartheta)\in\mathcal{H}^1_{K,\alpha}$ with $\norm{(\eta, \vartheta)}_{\mathcal{H}^1} \leq 1$ yields
\begin{align}\label{EST:DELT:MT:EP}
	\norm{(\delth\mu_\lambda, \delth\theta_\lambda)}_{(\mathcal{H}^1_{K,\alpha})^\prime} &\leq C\Big(\norm{(\delth\phi_\lambda, \delth\psi_\lambda)}_{K,\alpha} + \norm{(\delth[F_\lambda^\prime(\phi_\lambda)], \delt^h[G_\lambda^\prime(\psi_\lambda)])}_{\mathcal{L}^2}\Big).
\end{align}
Moreover, we note that
\begin{align*}
    \norm{(\delth\phi_\lambda, \delth\psi_\lambda)}_{K,\alpha} 
    \leq C \norm{(\delt\phi_\lambda,\delt\psi_\lambda)}_{K,\alpha}.
\end{align*}
In view of \ref{Yosida:Lipschitz}, we infer
\begin{align*}
    \norm{(\delth[F_\lambda^\prime(\phi_\lambda)], \delt^h[G_\lambda^\prime(\psi_\lambda)])}_{\mathcal{L}^2} 
    \leq \frac{C}{\lambda} \norm{(\delt\phi_\lambda,\delt\psi_\lambda)}_{\mathcal{L}^2}
    \leq \frac{C}{\lambda} \norm{(\delt\phi_\lambda,\delt\psi_\lambda)}_{K,\alpha}.
\end{align*}
Consequently, due to \eqref{EST:DELT:MT:EP}, we obtain the bound
\begin{align}\label{EST:DELT:MT:EP*}
	\norm{(\delth\mu_\lambda, \delth\theta_\lambda)}_{(\mathcal{H}^1_{K,\alpha})^\prime} &\leq \frac{C}{\lambda} \norm{(\delt\phi_\lambda,\delt\psi_\lambda)}_{K,\alpha},
\end{align}
which is uniform with respect to $h$.
Hence, we conclude that $(\delt\mu_\lambda, \delt\theta_\lambda)\in L^2(0,T;(\mathcal{H}^1_{K,\alpha})^\prime)$. Furthermore, passing to the limit $h\rightarrow 0$ in \eqref{WF:MUTHETA:h}, we conclude that
\begin{align}\label{WF:DELT:MT}
	\begin{split}
		\bigang{(\delt\mu_\lambda, \delt\theta_\lambda)}{(\eta,\vartheta)}_{\mathcal{H}^1_{K,\alpha}} &= \intO \Grad\delt\phi_\lambda\cdot\Grad\eta + F_\lambda^{\prime\prime}(\phi_\lambda)\delt\phi_\lambda\eta\dx \\
		&\qquad + \intG \Gradg\delt\psi_\lambda\cdot\Gradg\vartheta + G_\lambda^{\prime\prime}(\psi_\lambda)\delt\psi_\lambda\vartheta \dG \\
		&\qquad + \sigma(K)\intG (\alpha\delt\psi_\lambda - \delt\phi_\lambda)(\alpha\vartheta - \eta)\dG
	\end{split}
\end{align}
almost everywhere on $[0,T]$ for all $(\eta,\vartheta)\in\mathcal{H}^1_{K,\alpha}$. 

\textbf{Step 5.2:} Next, we show that $(\phi_\lambda,\psi_\lambda)\in L^2(0,T;\mathcal{W}^{3,6})$.  To this end, we apply $L^p$ regularity theory for bulk-surface elliptic systems (see, e.g., \cite[Proposition A.1]{Knopf2024a}, adapted to $k=3$), and get
\begin{align}
    \label{Est:ApproxProblem:PP:W^3,6}
	\begin{split}
	\norm{(\phi_\lambda, \psi_\lambda)}_{\mathcal{W}^{3,6}} &\leq C\big(1 + \norm{(\mu_\lambda - F_\lambda^\prime(\phi_\lambda), \theta_\lambda - G_\lambda^\prime(\psi_\lambda))}_{\mathcal{W}^{1,6}}\big) \\
	&\leq C\big(1 + \norm{(\mu_\lambda, \theta_\lambda)}_{\mathcal{W}^{1,6}} + \norm{(F_\lambda^\prime(\phi_\lambda), G_\lambda^\prime(\psi_\lambda))}_{\mathcal{W}^{1,6}}\big).
	\end{split}
\end{align}
For the second term on the right-hand side of \eqref{Est:ApproxProblem:PP:W^3,6}, we note that $F_\lambda^\prime(\phi_\lambda)\in L^\infty(0,T;L^6(\Om))$. In addition, $\abs{\Grad F_\lambda^\prime(\phi_\lambda)} = \abs{F_\lambda^{\prime\prime}(\phi_\lambda)\Grad\phi_\lambda} \leq (C/\lambda)\abs{\Grad\phi_\lambda}$, and thus $\Grad F_\lambda^\prime(\phi_\lambda)\in L^\infty(0,T;L^6(\Om))$. Consequently, $F_\lambda^\prime(\phi_\lambda)\in L^\infty(0,T; W^{1,6}(\Om))$. In an analogous manner, we derive $G_\lambda^\prime(\psi_\lambda)\in L^\infty(0,T; W^{1,6}(\Ga))$. As also $(\mu,\theta)\in L^2(0,T;\mathcal{H}^2)\emb L^2(0,T;\mathcal{W}^{1,6})$, we deduce that $(\phi_\lambda, \psi_\lambda)\in L^2(0,T;\mathcal{W}^{3,6})$.

\textbf{Step 5.3:} Lastly, we prove that $(\mu_\lambda, \theta_\lambda)\in L^2(0,T;\mathcal{H}^3)\cap C([0,T];\mathcal{H}^1)$. Once again, by the elliptic regularity theory for bulk-surface elliptic systems, we obtain
\begin{align}
	\begin{split}
		&\norm{(\mu_\lambda, \theta_\lambda)}_{\mathcal{H}^3} \\
        &\quad\leq C \norm{(\delt\phi_\lambda - \Grad\phi_\lambda\cdot\boldsymbol{v}_\lambda, \delt\psi_\lambda - \Gradg\psi_\lambda\cdot\boldsymbol{w}_\lambda)}_{\mathcal{H}^1} + \abs{\mean{\mu_\lambda}{\theta_\lambda}} \\
		&\quad\leq C \norm{(\delt\phi_\lambda, \delt\psi_\lambda)}_{\mathcal{H}^1} + C\norm{(\Grad\phi_\lambda, \Gradg\psi_\lambda)}_{\mathcal{W}^{1,\infty}}\norm{(\boldsymbol{v}_\lambda, \boldsymbol{w}_\lambda)}_{\mathcal{H}^1} + C\big(1 + \norm{(\mu_\lambda,\theta_\lambda)}\big) \\
		&\quad\leq C \norm{(\delt\phi_\lambda, \delt\psi_\lambda)}_{\mathcal{H}^1} + C\norm{(\phi_\lambda,\psi_\lambda)}_{\mathcal{W}^{3,6}}\norm{(\boldsymbol{v}_\lambda, \boldsymbol{w}_\lambda)}_{\mathcal{H}^1} + C\big(1 + \norm{(\mu_\lambda,\theta_\lambda)}\big).
	\end{split}
\end{align}
This immediately entails $(\mu_\lambda, \theta_\lambda)\in L^2(0,T; \mathcal{H}^3)$ due to \eqref{Est:vw:ep} and the previous step. Moreover, as $(\delt\mu_\lambda,\delt\theta_\lambda)\in L^2(0,T;(\mathcal{H}^1_{K,\alpha})^\prime)$ and $(\mu_\lambda,\theta_\lambda)\in L^\infty(0,T;\mathcal{H}^1)$, the Aubin--Lions lemma implies that $(\mu_\lambda,\theta_\lambda)\in C([0,T];\mathcal{L}^2)$. Next, recalling that in the case $K = 0$ we have $\boldsymbol{v}_\lambda\vert_\Ga = \boldsymbol{w}_\lambda$ a.e.~on $\Sigma$, we compute
\begin{align*}
    \Grad\phi_\lambda\cdot\boldsymbol{v}_\lambda = [\Grad\phi_\lambda - \n(\Grad\phi_\lambda\cdot\n)]\cdot\boldsymbol{v}_\lambda = \Gradg\phi_\lambda\cdot\boldsymbol{v}_\lambda = \alpha\Gradg\psi_\lambda\cdot\boldsymbol{w}_\lambda \quad\text{a.e.~on $\Sigma$ if $K=0$},
\end{align*}
from which we obtain $(-\Lap\mu_\lambda, -\Lapg\theta_\lambda + \beta\deln\mu_\lambda) = - (\delt\phi_\lambda + \Grad\phi_\lambda\cdot\boldsymbol{v}_\lambda, \delt\psi_\lambda + \Gradg\psi_\lambda\cdot\boldsymbol{w}_\lambda)\in L^2(0,T;\mathcal{H}^1_{K,\alpha})$.
This allows us to apply \cite[Proposition~A.1]{Knopf2024}, which yields $(\mu_\lambda,\theta_\lambda)\in C([0,T];\mathcal{H}^1)$ along with the chain rule 
\begin{align}\label{CR:MUTHETA_:lambda}
\ddt \frac12\norm{(\mu_\lambda, \theta_\lambda)}_{L,\beta}^2 = \bigang{(\delt\mu_\lambda, \delt\theta_\lambda)}{(-\Lap\mu_\lambda, -\Lapg\theta_\lambda + \beta\deln\mu_\lambda)}_{\mathcal{H}^1_{K,\alpha}}.
\end{align}
We further notice that in view of the continuity $(\mu_\lambda,\theta_\lambda)\in C([0,T];\mathcal{H}^1)$, it holds
\begin{align}\label{MT:InitCond:Ident}
    (\mu_\lambda(0),\theta_\lambda(0)) = (\mu_{0,\lambda}, \theta_{0,\lambda})
    \quad\text{a.e.~in $\Omega\times \Gamma$.}
\end{align}
In particular, due to \eqref{Conv:Init:MT}, we thus have
\begin{align}\label{MT:InitCond:Conv}
    (\mu_\lambda(0),\theta_\lambda(0)) \rightarrow (\mu_0,\theta_0) 
    \quad\text{weakly in $\mathcal{H}^1$, strongly in $\mathcal{L}^2$}
\end{align}
as $\lambda\rightarrow 0$.

\textbf{Step 6: Higher order uniform estimates.} Now, we establish the main estimates for the regularity argument, which will be uniform with respect to $\lambda\in(0,\lambda_\ast)$. To this end, we test \eqref{WF:DELT:MT} with $(\delt\phi_\lambda, \delt\psi_\lambda)\in\mathcal{H}^1_{K,\alpha}$ obtaining
\begin{align}\label{HigherOrder1}
	\begin{split}
		&\bigang{(\delt\mu_\lambda, \delt\theta_\lambda)}{(\delt\phi_\lambda, \delt\psi_\lambda)}_{\mathcal{H}^1_{K,\alpha}} \\
		&\quad = \intO \abs{\Grad\delt\phi_\lambda}^2 + F_\lambda^{\prime\prime}(\phi_\lambda)\abs{\delt\phi_\lambda}^2 \dx + \intG \abs{\Gradg\delt\psi}^2 + G_\lambda^{\prime\prime}(\psi_\lambda)\abs{\delt\psi_\lambda}^2\dG \\
		&\qquad + \sigma(K) \intG \abs{\alpha\delt\psi_\lambda - \delt\phi_\lambda}^2\dG \\
		&\quad = \norm{(\delt\phi_\lambda, \delt\psi_\lambda)}_{K,\alpha}^2 + \intO F_\lambda^{\prime\prime}(\phi_\lambda)\abs{\delt\phi_\lambda}^2 \dx + \intG G_\lambda^{\prime\prime}(\psi_\lambda)\abs{\delt\psi_\lambda}^2\dG.
	\end{split}
\end{align}
On the other hand, using the identities \eqref{STR:PHI:EP}-\eqref{STR:PSI:EP} for $(\delt\phi_\lambda, \delt\psi_\lambda)$, as well as the chain rule \eqref{CR:MUTHETA_:lambda}, we find
\begin{align}\label{HigherOrder2}
	\begin{split}
		&\bigang{(\delt\mu_\lambda, \delt\theta_\lambda)}{(\delt\phi_\lambda, \delt\psi_\lambda)}_{\mathcal{H}^1_{K,\alpha}} \\
		&\quad = \bigang{(\delt\mu_\lambda, \delt\theta_\lambda)}{(\Lap\mu_\lambda - \Grad\phi_\lambda\cdot\boldsymbol{v}_\lambda, \Lapg\theta_\lambda - \Gradg\psi_\lambda\cdot\boldsymbol{w}_\lambda - \beta\deln\mu_\lambda)}_{\mathcal{H}^1_{K,\alpha}} \\
		&\quad = - \bigang{(\delt\mu_\lambda, \delt\theta_\lambda)}{(-\Lap\mu_\lambda, -\Lapg\theta_\lambda + \beta\deln\mu_\lambda)}_{\mathcal{H}^1_{K,\alpha}} \\
        &\qquad - \bigang{(\delt\mu_\lambda, \delt\theta_\lambda)}{(\Grad\phi_\lambda\cdot\boldsymbol{v}_\lambda, \Gradg\psi_\lambda\cdot\boldsymbol{w}_\lambda)}_{\mathcal{H}^1_{K,\alpha}} \\
		&\quad = - \ddt\frac12\norm{(\mu_\lambda, \theta_\lambda)}_{L,\beta}^2 - \bigang{(\delt\mu_\lambda, \delt\theta_\lambda)}{(\Grad\phi_\lambda\cdot\boldsymbol{v}_\lambda, \Gradg\psi_\lambda\cdot\boldsymbol{w}_\lambda)}_{\mathcal{H}^1_{K,\alpha}}.
	\end{split}
\end{align}
Now, for the second term on the right-hand side, we note again that $(\Grad\phi_\lambda\cdot\boldsymbol{v}_\lambda, \Gradg\psi_\lambda\cdot\boldsymbol{w}_\lambda)\in L^2(0,T;\mathcal{H}^1_{K,\alpha})$. This allows us to choose this pair as a test function in the weak formulation \eqref{WF:DELT:MT}. In this way, we obtain
\begin{align}\label{HigherOrder3}
    \begin{split}
        &\bigang{(\delt\mu_\lambda, \delt\theta_\lambda)}{(\Grad\phi_\lambda\cdot\boldsymbol{v}_\lambda, \Gradg\psi_\lambda\cdot\boldsymbol{w}_\lambda)}_{\mathcal{H}^1_{K,\alpha}}\\
         &\quad = \bigscp{(\delt\phi_\lambda, \delt\psi_\lambda)}{(\Grad\phi_\lambda\cdot\boldsymbol{v}_\lambda, \Gradg\psi_\lambda\cdot\boldsymbol{w}_\lambda)}_{K,\alpha} \\
         &\qquad + \intO F_\lambda^{\prime\prime}(\phi_\lambda)\delt\phi_\lambda\Grad\phi_\lambda\cdot\boldsymbol{v}_\lambda \dx + \intG G_\lambda^{\prime\prime}(\psi_\lambda)\delt\psi_\lambda\Gradg\psi_\lambda\cdot\boldsymbol{w}_\lambda\dG \\ 
         &\quad = \bigscp{(\delt\phi_\lambda, \delt\psi_\lambda)}{(\Grad\phi_\lambda\cdot\boldsymbol{v}_\lambda, \Gradg\psi_\lambda\cdot\boldsymbol{w}_\lambda)}_{K,\alpha} \\
         &\qquad + \intO \Grad(F_\lambda^\prime(\phi_\lambda))\cdot\boldsymbol{v}_\lambda\delt\phi_\lambda\dx + \intG \Gradg(G_\lambda^\prime(\psi_\lambda))\cdot\boldsymbol{w}_\lambda\delt\psi_\lambda\dG \\
         &\quad = \bigscp{(\delt\phi_\lambda, \delt\psi_\lambda)}{(\Grad\phi_\lambda\cdot\boldsymbol{v}_\lambda, \Gradg\psi_\lambda\cdot\boldsymbol{w}_\lambda)}_{K,\alpha} \\
         &\qquad - \intO F_\lambda^\prime(\phi_\lambda)\Grad\delt\phi_\lambda\cdot\boldsymbol{v}_\lambda\dx - \intG G_\lambda^\prime(\psi_\lambda)\Gradg\delt\psi_\lambda\cdot\boldsymbol{w}_\lambda\dG.
     \end{split}
\end{align}
Here we used integration by parts along with the facts that $\Div\;\boldsymbol{v}_\lambda = 0$ a.e.~in $Q$, $\Divg\;\boldsymbol{w}_\lambda = 0$ and $\boldsymbol{v}_\lambda\cdot\n = \boldsymbol{w}_\lambda\cdot\n = 0$ a.e.~on $\Sigma$. Thus, collecting \eqref{HigherOrder1}-\eqref{HigherOrder3}, we find that
\begin{align}\label{EST:1}
	\begin{split}
		&\ddt\frac12\norm{(\mu_\lambda, \theta_\lambda)}_{L,\beta}^2 + \norm{(\delt\phi_\lambda, \delt\psi_\lambda)}_{K,\alpha}^2 + \intO F_{1,\lambda}^{\prime\prime}(\phi_\lambda)\abs{\delt\phi_\lambda}^2\dx + \intG G_{1,\lambda}^{\prime\prime}(\psi_\lambda)\abs{\delt\psi_\lambda}^2\dG 
        \\
		&\quad = - \bigscp{(\delt\phi_\lambda, \delt\psi_\lambda)}{(\Grad\phi_\lambda\cdot\boldsymbol{v}_\lambda, \Gradg\psi_\lambda\cdot\boldsymbol{w}_\lambda)}_{K,\alpha} + \intO F_\lambda^\prime(\phi_\lambda)\Grad\delt\phi_\lambda\cdot\boldsymbol{v}_\lambda\dx 
        \\
        &\qquad + \intG G_\lambda^\prime(\psi_\lambda)\Gradg\delt\psi_\lambda\cdot\boldsymbol{w}_\lambda\dG - \intO F_2^{\prime\prime}(\phi_\lambda)\abs{\delt\phi_\lambda}^2\dx - \intG G_2^{\prime\prime}(\psi_\lambda)\abs{\delt\psi_\lambda}^2\dG. 
	\end{split}
 \end{align}
We immediately notice that the last two terms on the left-hand side of \eqref{EST:1} are non-negative due to \ref{Yosida:Convexity}. Moreover, applying the bulk-surface Poincar\'{e} inequality (Lemma~\ref{Prelim:Poincare}) immediately yields the inequalities
\begin{align*}
	\norm{(\delt\phi_\lambda, \delt\psi_\lambda)}_{L,\beta} \leq C\norm{(\delt\phi_\lambda, \delt\psi_\lambda)}_{\mathcal{H}^1} \leq C\norm{(\delt\phi_\lambda, \delt\psi_\lambda)}_{K,\alpha}.
\end{align*}
Then, using this inequality, the Lipschitz continuity of $F_2^{\prime}$ and $G_2^\prime$, the weak formulation for $(\delt\phi_\lambda, \delt\psi_\lambda)$ as well as the Sobolev embedding $\mathcal{W}^{2,6}\emb\mathcal{L}^\infty$, the last two terms on the right-hand side of \eqref{EST:1} can be bounded as follows:
\begin{align}\label{Est:delt:pp:L^2}
    \begin{split}
    	&- \intO F_2^{\prime\prime}(\phi_\lambda)\abs{\delt\phi_\lambda}^2\dx - \intG G_2^{\prime\prime}(\psi_\lambda)\abs{\delt\psi_\lambda}^2\dG \leq C\norm{(\delt\phi_\lambda, \delt\psi_\lambda)}_{\mathcal{L}^2}^2 
        \\
        &\quad = C \intO \phi_\lambda\boldsymbol{v}_\lambda\cdot\Grad\delt\phi_\lambda\dx 
            + C\intG \psi_\lambda\boldsymbol{w}_\lambda\cdot\Gradg\delt\psi_\lambda\dG 
            \\
    	&\qquad - C \bigscp{(\mu_\lambda, \theta_\lambda)}{(\delt\phi_\lambda, \delt\psi_\lambda)}_{L,\beta} 
        \\
    	&\quad\leq C \norm{(\phi_\lambda, \psi_\lambda)}_{\mathcal{L}^\infty}
            \norm{(\delt\phi_\lambda, \delt\psi_\lambda)}_{K,\alpha}
            \norm{(\boldsymbol{v}_\lambda, \boldsymbol{w}_\lambda)}_{\mathcal{L}^2} 
            + C\norm{(\mu_\lambda, \theta_\lambda)}_{L,\beta}
            \norm{(\delt\phi_\lambda, \delt\psi_\lambda)}_{L,\beta} 
        \\
    	&\quad\leq \frac16\norm{(\delt\phi_\lambda, \delt\psi_\lambda)}_{K,\alpha}^2 
            + C\norm{(\phi_\lambda, \psi_\lambda)}_{\mathcal{W}^{2,6}}^2
            \norm{(\boldsymbol{v}_\lambda, \boldsymbol{w}_\lambda)}_{\mathcal{L}^2}^2 
            + C\norm{(\mu_\lambda, \theta_\lambda)}_{L,\beta}^2 
        \\
        &\quad\leq \frac16\norm{(\delt\phi_\lambda,\delt\psi_\lambda)}_{K,\alpha}^2 
            + C\norm{(\boldsymbol{v}_\lambda,\boldsymbol{w}_\lambda)}_{\mathcal{L}^2}^2 
            + C\big(1 + \norm{(\boldsymbol{v}_\lambda,\boldsymbol{w}_\lambda)}_{\mathcal{L}^2}^2\big)
            \norm{(\mu_\lambda,\theta_\lambda)}_{L,\beta}^2.
    \end{split}
\end{align}
Our next step is to bound the first term on the right-hand side of \eqref{EST:1}. 
In view of \eqref{Est:ApproxProblem:PP:W^26}, we first use the Sobolev embedding $W^{1,6}(\Om)\emb L^\infty(\Om)$ to deduce that
\begin{align}\label{Est:delt:pp:Ka:phi}
	\begin{split}
		&\abs{\intO \Grad\delt\phi_\lambda\cdot\Grad(\Grad\phi_\lambda\cdot\boldsymbol{v}_\lambda)\dx} \\
		&\quad\leq \Big(\norm{\phi_\lambda}_{W^{2,6}(\Om)}\norm{\boldsymbol{v}_\lambda}_{L^3(\Om)} + \norm{\Grad\phi_\lambda}_{L^\infty(\Om)}\norm{\boldsymbol{v}_\lambda}_{H^1(\Om)}\Big)\norm{\Grad\delt\phi_\lambda}_{L^2(\Om)} \\
		&\quad \leq \frac16 \norm{\Grad\delt\phi_\lambda}_{L^2(\Om)}^2 + C\norm{\phi_\lambda}_{W^{2,6}(\Om)}^2\norm{\boldsymbol{v}_\lambda}_{H^1(\Om)}^2 \\
		&\quad\leq \frac16 \norm{\Grad\delt\phi_\lambda}_{L^2(\Om)}^2 +  C\norm{\boldsymbol{v}_\lambda}_{H^1(\Om)}^2
		\big(1 + \norm{(\mu_\lambda, \theta_\lambda)}_{L,\beta}^2\big).
	\end{split}
\end{align}
Analogously, exploiting again \eqref{Est:ApproxProblem:PP:W^26} and the embedding $W^{1,6}(\Ga)\emb L^\infty(\Ga)$, we obtain
\begin{align}\label{Est:delt:pp:Ka:psi}
	\begin{split}
		&\abs{\intG \Gradg\delt\psi_\lambda\cdot\Gradg(\Gradg\psi_\lambda\cdot\boldsymbol{w}_\lambda)\dG} \\
		&\quad\leq \frac16\norm{\Gradg\delt\psi_\lambda}_{L^2(\Ga)}^2 + C\norm{\boldsymbol{w}_\lambda}_{H^1(\Ga)}^2
		\big(1 + \norm{(\mu_\lambda, \theta_\lambda)}_{L,\beta}^2\big).
	\end{split}
\end{align}
Next, we use \eqref{Est:ApproxProblem:PP:W^26} in combination with the trace theorem $H^1(\Om)\emb L^4(\Ga)$ and infer that
\begin{align}\label{Est:delt:pp:Ka:boundary}
	\begin{split}
		&\abs{\sigma(K) \intG (\alpha\delt\psi_\lambda - \delt\phi_\lambda)(\alpha\Gradg\psi_\lambda\cdot\boldsymbol{w}_\lambda - \Grad\phi_\lambda\cdot\boldsymbol{v}_\lambda)\dG} \\
		&\quad\leq \sigma(K)\Big(\norm{\alpha\Gradg\psi_\lambda}_{L^4(\Ga)}\norm{\boldsymbol{w}_\lambda}_{L^4(\Ga)} + \norm{\Grad\phi_\lambda}_{L^4(\Ga)}\norm{\boldsymbol{v}_\lambda}_{L^4(\Ga)}\Big)\norm{\alpha\delt\psi_\lambda - \delt\phi_\lambda}_{L^2(\Ga)}\\
		&\quad\leq \frac16\sigma(K)\norm{\alpha\delt\psi_\lambda - \delt\phi_\lambda}_{L^2(\Ga)}^2 + C\sigma(K)\norm{(\boldsymbol{v}_\lambda, \boldsymbol{w}_\lambda)}_{\mathcal{H}^1}^2\norm{(\phi_\lambda, \psi_\lambda)}_{\mathcal{H}^2}^2 \\
		&\quad\leq \frac16\sigma(K)\norm{\alpha\delt\psi_\lambda - \delt\phi_\lambda}_{L^2(\Ga)}^2 + C\sigma(K)\norm{(\boldsymbol{v}_\lambda, \boldsymbol{w}_\lambda)}_{\mathcal{H}^1}^2
		\big(1 + \norm{(\mu_\lambda, \theta_\lambda)}_{L,\beta}^2\big).
	\end{split}
\end{align}
Recalling the definition of the inner product $(\cdot,\cdot)_{K,\alpha}$, we combine \eqref{Est:delt:pp:Ka:phi}-\eqref{Est:delt:pp:Ka:boundary} to conclude that
\begin{align}\label{Est:delt:pp:Ka}
	\begin{split}
		&\abs{\bigscp{(\delt\phi_\lambda, \delt\psi_\lambda)}{(\Grad\phi_\lambda\cdot\boldsymbol{v}_\lambda, \Gradg\psi_\lambda\cdot\boldsymbol{w}_\lambda)}_{K,\alpha} }
        \\
        &\quad \le \frac16\norm{(\delt\phi_\lambda,\delt\psi_\lambda)}_{K,\alpha}^2 
            + C\norm{(\boldsymbol{v}_\lambda, \boldsymbol{w}_\lambda)}_{\mathcal{H}^1}^2
		      \big(1 + \norm{(\mu_\lambda, \theta_\lambda)}_{L,\beta}^2\big)
	\end{split}
\end{align}
Furthermore, by means of \eqref{Est:ApproxProblem:MT:POT:H^1:L^6}, the second and the third term on the right-hand side of \eqref{EST:1} can be bounded as
\begin{align}
    \label{Est:pot:term}
	\begin{split}
		&\abs{\intO F_\lambda^\prime(\phi_\lambda)\Grad\delt\phi_\lambda\cdot\boldsymbol{v}_\lambda\dx + \intG G_\lambda^\prime(\psi_\lambda)\Gradg\delt\psi_\lambda\cdot\boldsymbol{w}_\lambda\dG} \\
		&\quad\leq \frac16 \norm{(\delt\phi_\lambda, \delt\psi_\lambda)}_{K,\alpha}^2 + C\norm{(F_\lambda^\prime(\phi_\lambda), G_\lambda^\prime(\psi_\lambda))}_{\mathcal{L}^6}^2\norm{(\boldsymbol{v}_\lambda, \boldsymbol{w}_\lambda)}_{\mathcal{H}^1}^2 \\
        &\quad\leq \frac16\norm{(\delt\phi_\lambda,\delt\psi_\lambda)}_{K,\alpha}^2 + C\big(1 + \norm{(\mu_\lambda,\theta_\lambda)}_{L,\beta}^2\big)\norm{(\boldsymbol{v}_\lambda,\boldsymbol{w}_\lambda)}_{\mathcal{H}^1}^2.
	\end{split}
\end{align}
Finally, collecting the estimates \eqref{Est:delt:pp:L^2}, \eqref{Est:delt:pp:Ka} and \eqref{Est:pot:term} we end up with the differential inequality
\begin{align}
    \label{Est:ApproxProblem:Gronwall2}
	\begin{split}
		&\ddt\frac12\norm{(\mu_\lambda, \theta_\lambda)}_{L,\beta}^2 + \frac12\norm{(\delt\phi_\lambda, \delt\psi_\lambda)}_{K,\alpha}^2 \\
		&\quad\leq C\norm{(\boldsymbol{v}_\lambda, \boldsymbol{w}_\lambda)}_{\mathcal{H}^1}^2 + C\big(1 + \norm{(\boldsymbol{v}_\lambda, \boldsymbol{w}_\lambda)}_{\mathcal{H}^1}^2\big)\norm{(\mu_\lambda, \theta_\lambda)}_{L,\beta}^2.
	\end{split}
\end{align}
We are now in the position to apply Gronwall's lemma to \eqref{Est:ApproxProblem:Gronwall2}, which yields
\begin{align}
    \label{Est:ApproxProblem:Gronwall3}
    \begin{split}
        \norm{(\mu_\lambda(t), \theta_\lambda(t))}_{L,\beta}^2 &\leq \norm{(\mu_\lambda(0), \theta_\lambda(0))}_{L,\beta}^2\exp\Big(C\int_0^t \big(1 + \norm{(\boldsymbol{v}_\lambda, \boldsymbol{w}_\lambda)}_{\mathcal{H}^1}^2\big)\dtau\Big) \\
        &\quad + C\int_0^t \norm{(\boldsymbol{v}_\lambda, \boldsymbol{w}_\lambda)}_{\mathcal{H}^1}^2\exp\Big(C\int_s^t \big(1 + \norm{(\boldsymbol{v}_\lambda, \boldsymbol{w}_\lambda)}_{\mathcal{H}^1}^2\big)\dtau\Big)\ds
    \end{split}
\end{align}
for almost all $t\in[0,T]$. From the convergence in \eqref{Conv:Init:MT} we deduce that
\begin{align}\label{Est:ApproxIC:MT:H^1}
    \norm{(\mu_\lambda(0), \theta_\lambda(0))}_{L,\beta} \leq C\big( 1 + \norm{(\mu_0,\theta_0)}_{L,\beta}\big).
\end{align}
Combining \eqref{Est:ApproxProblem:Gronwall2}-\eqref{Est:ApproxIC:MT:H^1} with \eqref{Est:vw:ep}, we obtain
\begin{align}\label{Est:MT:Lb:uniform}
    \sup_{t\in[0,T]}\norm{(\mu_\lambda(t),\theta_\lambda(t))}_{L,\beta}^2 \leq C
\end{align}
for all $\lambda\in(0,\lambda_\ast)$. Then, integrating \eqref{Est:ApproxProblem:Gronwall3} in time from $0$ to $T$ and using the estimates \eqref{Est:vw:ep}, \eqref{Est:ApproxIC:MT:H^1}-\eqref{Est:MT:Lb:uniform}, we have
\begin{align}\label{Est:PP:Ka}
    \int_0^T \norm{(\delt\phi_\lambda,\delt\psi_\lambda)}_{K,\alpha}^2\ds \leq C.
\end{align}
As $K\in[0,\infty)$, the bulk-surface Poincar\'{e} inequality (Lemma~\ref{Prelim:Poincare}) directly yields
\begin{align}\label{Est:delt:pp:H^1:uniform}
    \norm{(\delt\phi_\lambda,\delt\psi_\lambda)}_{L^2(0,T;\mathcal{H}^1)} \leq C.
\end{align}
Using \eqref{Est:ApproxProblem:PP:W^26} and \eqref{Est:ApproxProblem:MT:POT:H^1:L^6} in combination with \eqref{Est:MT:Lb:uniform}, we further deduce that
\begin{align}\label{Est:pp:pot:W^26:inf:uniform}
    \norm{(\phi_\lambda,\psi_\lambda)}_{L^\infty(0,T;\mathcal{W}^{2,6})} 
    + \norm{(F_{1,\lambda}^\prime(\phi_\lambda),G_{1,\lambda}^\prime(\psi_\lambda))}_{L^\infty(0,T;\mathcal{L}^6)}
    \leq C.
\end{align}
Next, employing elliptic regularity theory for systems with bulk-surface coupling (see, e.g., \cite[Theorem~3.3]{Knopf2021}) in combination with the Sobolev embedding $\mathcal{W}^{2,6}\emb\mathcal{W}^{1,\infty}$, \eqref{Est:ApproxProblem:mean:MT:K}, \eqref{Est:MT:Lb:uniform} and \eqref{Est:pp:pot:W^26:inf:uniform}, we obtain
\begin{align}
    \label{EST:MT:H2}
    \begin{split}
    &\norm{(\mu_\lambda(t),\theta_\lambda(t))}_{\mathcal{H}^2} \\
    &\quad\leq C\norm{(-\delt\phi_\lambda(t) - \Grad\phi_\lambda(t)\cdot\boldsymbol{v}_\lambda(t),-\delt\psi_\lambda(t) - \Gradg\psi_\lambda(t)\cdot\boldsymbol{w}_\lambda)(t)}_{\mathcal{L}^2} + C\abs{\mean{\mu_\lambda}{\theta_\lambda}}\\
    &\quad\leq C\norm{(\delt\phi_\lambda(t),\delt\psi_\lambda(t))}_{\mathcal{L}^2} + C\norm{(\Grad\phi_\lambda(t),\Gradg\psi_\lambda(t))}_{\mathcal{L}^\infty}\norm{(\boldsymbol{v}_\lambda(t),\boldsymbol{w}_\lambda(t))}_{\mathcal{L}^2} \\
    &\qquad + C\big(1 + \norm{(\mu_\lambda,\theta_\lambda)}_{L,\beta}\big) \\
    &\quad\leq C\big(1 + \norm{(\delt\phi_\lambda(t),\delt\psi_\lambda(t))}_{\mathcal{L}^2} + \norm{(\phi_\lambda(t),\psi_\lambda(t))}_{\mathcal{W}^{2,6}}\norm{(\boldsymbol{v}_\lambda(t),\boldsymbol{w}_\lambda(t))}_{\mathcal{L}^2}\big) \\
    &\quad\leq C\big(1 + \norm{(\delt\phi_\lambda(t),\delt\psi_\lambda(t))}_{\mathcal{L}^2} + \norm{(\boldsymbol{v}_\lambda(t),\boldsymbol{w}_\lambda(t))}_{\mathcal{L}^2}\big)
    \end{split}
\end{align}
for almost all $t\in[0,T]$. Integrating the previous inequality, we readily conclude that
\begin{align}\label{Est:mt:H^2:uniform}
    \norm{(\mu_\lambda,\theta_\lambda)}_{L^2(0,T;\mathcal{H}^2)} \leq C
\end{align}
by means of \eqref{Est:vw:ep} and \eqref{Est:delt:pp:H^1:uniform}.
Invoking \eqref{Est:ApproxProblem:MT:L^2:K}, this further entails
\begin{align}\label{Est:mt:H^1:inf:uniform}
    \norm{(\mu_\lambda,\theta_\lambda)}_{L^\infty(0,T;\mathcal{H}^1)} \leq C.
\end{align}
Next, as a consequence of \eqref{Est:pp:pot:W^26:inf:uniform}, we see that
\begin{align}\label{Est:pp:Linfty:uniform}
    \norm{(\phi_\lambda,\psi_\lambda)}_{L^\infty(0,T;\mathcal{L}^\infty)} \leq C.
\end{align}
Thus, by \eqref{Est:vw:ep}, \eqref{Est:MT:Lb:uniform} and \eqref{Est:pp:Linfty:uniform}, we use a comparison argument to conclude that
\begin{align}\label{Est:delt:pp:inf:uniform}
    \norm{(\delt\phi_\lambda,\delt\psi_\lambda)}_{L^\infty(0,T;(\mathcal{H}^1)^\prime)} \leq C.
\end{align}
Applying once more elliptic regularity theory for systems with bulk-surface coupling (see, e.g., \cite[Theorem~3.3]{Knopf2021}), we obtain
\begin{align*}
    &\norm{(\mu_\lambda,\theta_\lambda)}_{\mathcal{H}^3} \\
    &\quad\leq C\norm{(-\delt\phi_\lambda - \boldsymbol{v}_\lambda\cdot\Grad\phi_\lambda,-\delt\psi_\lambda - \boldsymbol{w}_\lambda\cdot\Gradg\psi_\lambda)}_{\mathcal{H}^1} + C\abs{\mean{\mu_\lambda}{\theta_\lambda}}
    \\
    &\quad\leq C\big(\norm{(\delt\phi_\lambda,\delt\psi_\lambda)}_{\mathcal{H}^1} + \norm{(\phi_\lambda,\psi_\lambda)}_{\mathcal{W}^{1,\infty}}\norm{(\boldsymbol{v}_\lambda,\boldsymbol{w}_\lambda)}_{\mathcal{H}^1}\big) + C\big(1 + \norm{(\mu_\lambda,\theta_\lambda)}_{L,\beta}\big)
    \\
    &\quad\leq C\big(1 + \norm{(\delt\phi_\lambda,\delt\psi_\lambda)}_{K,\alpha} + \norm{(\boldsymbol{v}_\lambda,\boldsymbol{w}_\lambda)}_{\mathcal{H}^1}\big)
\end{align*}
almost everywhere on $[0,T]$.
Integrating this estimate in time from $0$ to $T$ and combining the resulting equation with \eqref{Est:vw:ep} and \eqref{Est:PP:Ka}, we conclude that 
\begin{align}\label{Est:PP:KaMT}
    \int_0^T \norm{(\mu_\lambda,\theta_\lambda)}_{\mathcal{H}^3}^2\ds \leq C.
\end{align}

\textbf{Step 7: Passage to the limit and the existence of a strong solution.} 
Based on the uniform estimates \eqref{Est:delt:pp:H^1:uniform}, \eqref{Est:pp:pot:W^26:inf:uniform}, \eqref{Est:mt:H^1:inf:uniform}, \eqref{Est:delt:pp:inf:uniform} and \eqref{Est:PP:KaMT},
we proceed similarly as in the proof of Theorem~\ref{THEOREM:EOWS} and use standard weak and strong compactness results to infer the existence of a quadruplet $(\phi,\psi,\mu,\theta)$ of functions satisfying
\begin{align*}
    (\delt\phi,\delt\psi) &\in L^\infty(0,T;(\mathcal{H}^1_{L,\beta})^\prime) \cap L^2(0,T;\mathcal{H}^1), \\
    (\phi,\psi)&\in L^\infty(0,T;\mathcal{W}^{2,6})\cap \Big(C(\overline{Q})\times C(\overline\Sigma)\Big), \\
    (\mu,\theta)&\in L^\infty(0,T;\mathcal{H}^1_{L,\beta})\cap L^2(0,T;\mathcal{H}^3), \\
    (F^\prime(\phi), G^\prime(\psi))&\in L^\infty(0,T;\mathcal{L}^6),
\end{align*}
such that
\begin{alignat*}{2}
    (\delt\phi_\lambda,\delt\psi_\lambda) &\to (\delt\phi,\delt\psi)
    &&\quad\text{weakly-$\ast$ in $L^\infty(0,T;(\mathcal{H}^1_{L,\beta})^\prime)$, \
        weakly in $L^2(0,T;\mathcal{H}^1)$},
    \\
    (\phi_\lambda,\psi_\lambda) &\to (\phi,\psi)
    &&\quad\text{weakly-$\ast$ in $L^\infty(0,T;\mathcal{W}^{2,6})$, \
        strongly in $C(\overline{Q})\times C(\overline\Sigma)$},
    \\
    (\mu_\lambda,\theta_\lambda) &\to (\mu,\theta)
    &&\quad\text{weakly-$\ast$ in $L^\infty(0,T;\mathcal{H}^3)$, \
    weakly in $L^2(0,T;\mathcal{H}^1_{L,\beta})$},
    \\
    \big(F_\lambda'(\phi_\lambda),G_\lambda'(\psi_\lambda)\big) &\to (F^\prime(\phi), G^\prime(\psi))
    &&\quad\text{weakly-$\ast$ in $L^\infty(0,T;\mathcal{L}^6)$}
\end{alignat*}  
up to subsequence extraction.
Recall here that $\mathcal{H}^1_{L,\beta} = \mathcal{H}^1$ as we only consider $L\in (0,\infty]$.
Together with the convergence of the approximate initial data (see \eqref{Conv:Init:PP} and \eqref{ID:INI}) and the approximate velocity fields (see \eqref{Conv:vw:ep}), it is straightforward to verify that the quadruplet $(\phi,\psi,\mu,\theta)$ is the weak solution of System~\eqref{EQ:SYSTEM} in the sense of Definition~\ref{DEF:SING:WS} to the initial data $(\phi_0,\psi_0)$ and the velocity fields $(\boldsymbol{v},\boldsymbol{w})$.
Due to the regularities we have established, it is clear that $(\phi,\psi,\mu,\theta)$
actually satisfies System~\eqref{EQ:SYSTEM} almost everywhere. 
The remaining regularity assertion $(F^\prime(\phi),G^\prime(\psi))\in L^2(0,T;\mathcal{L}^\infty)$ can be established as in \cite[Proposition~7.1 and Proposition~7.2]{Knopf2024a}.

\textbf{Step 8: Refinement of the uniform estimates.} As a last step, we show the validity of \eqref{HighReg:Est:MT:Lb:sup} and \eqref{HighReg:Est:PP:Ka:L^2} by refining the estimates from Step 4 and Step 6. In fact, so far, the constants in the estimates we have proven still depend explicitly on the time $T$ as well as the norms of the velocity fields. To this end, we first notice that
\begin{align}\label{PP:Lambda:uniform}
    \abs{\phi_\lambda} \leq 2 \quad\text{a.e.~in~}Q, \qquad 
    \abs{\psi_\lambda} \leq 2 \quad\text{a.e.~on~}\Sigma
\end{align}
for all $\lambda\in(0,\lambda_\star)$, where $\lambda_\star$ is sufficiently small. Indeed, since the limiting solution satisfies $\abs{\phi} < 1$ a.e. in $Q$ as well as $\abs{\psi} < 1$ a.e. on $\Sigma$, the claim readily follows from the uniform convergence
\begin{align*}
    (\phi_\lambda,\psi_\lambda) \rightarrow (\phi,\psi) \qquad\text{strongly in~} C(\overline{Q})\times C(\overline{\Sigma})
\end{align*}
as $\lambda\rightarrow 0$. Therefore, in the following, we assume that $\lambda\in(0,\lambda_\star)$. Having \eqref{PP:Lambda:uniform} at hand, we can go back to the approximated energy inequality \eqref{EI:ApproxProblem} and bound the convective terms as follows:
\begin{align*}
    &\abs{\int_0^t\intO \phi_\lambda\boldsymbol{v}_\lambda\cdot\Grad\mu_\lambda\dxs + \int_0^t\intG \psi_\lambda\boldsymbol{w}_\lambda\cdot\Gradg\theta_\lambda\dGs} \\
    &\quad\leq \frac12\int_0^t\intO \abs{\Grad\mu_\lambda}^2\dxs + \frac12 \int_0^t\intG \abs{\Gradg\theta_\lambda}^2\dGs + \int_0^t\intO \abs{\boldsymbol{v}_\lambda}^2\dxs + \int\intO \abs{\boldsymbol{w}_\lambda}^2\dGs.
\end{align*}
Estimating the approximated initial data as before, we thus have
\begin{align}\label{Est:Gronwall:Refined}
    \begin{split}
        &\norm{(\phi_\lambda,\psi_\lambda)}_{L^\infty(0,T;\mathcal{H}^1)}^2 + \norm{(F_\lambda(\phi_\lambda),G_\lambda(\psi_\lambda))}_{L^\infty(0,T;\mathcal{L}^1)} + \int_0^T \norm{(\mu_\lambda,\theta_\lambda)}_{L,\beta}^2\ds\\
        &\quad \leq C_0\big(1 + \norm{(\boldsymbol{v}_\lambda,\boldsymbol{w}_\lambda)}_{L^2(0,T;\mathcal{L}^2)}^2\big).
    \end{split}
\end{align}
Then, the computations presented in \eqref{Est:ApproxProblem:MZ}-\eqref{Est:Approx:H^2+trace:Phi} can be repeated, and we now obtain, using \eqref{Est:Gronwall:Refined} instead of \eqref{Est:ApproxProblem:uniform:1},
\begin{align}\label{Est:ApproxProblem:MT:POT:H^1:L^6:refined}
    \begin{split}
        &\norm{(\phi_\lambda,\theta_\lambda)}_{\mathcal{W}^{2,6}} + \norm{(F_{1,\lambda}^\prime(\phi_\lambda), G_{1,\lambda}^\prime(\psi_\lambda))}_{\mathcal{L}^6} + \norm{(\mu_\lambda,\theta_\lambda)}_{\mathcal{H}^1} \\
        &\quad\leq C_0\big(1 + \norm{(\boldsymbol{v}_\lambda,\boldsymbol{w}_\lambda)}_{L^2(0,T;\mathcal{L}^2)} + \norm{(\mu_\lambda,\theta_\lambda)}_{L,\beta}\big).
    \end{split}
\end{align}
Then, as before, we can prove the higher order regularity as in Step 5, and eventually arrive at
\begin{align}\label{EST:1:refined}
	\begin{split}
		&\ddt\frac12\norm{(\mu_\lambda, \theta_\lambda)}_{L,\beta}^2 + \norm{(\delt\phi_\lambda, \delt\psi_\lambda)}_{K,\alpha}^2
        \\
		&\quad \leq - \bigscp{(\delt\phi_\lambda, \delt\psi_\lambda)}{(\Grad\phi_\lambda\cdot\boldsymbol{v}_\lambda, \Gradg\psi_\lambda\cdot\boldsymbol{w}_\lambda)}_{K,\alpha} + \intO F_\lambda^\prime(\phi_\lambda)\Grad\delt\phi_\lambda\cdot\boldsymbol{v}_\lambda\dx 
        \\
        &\qquad + \intG G_\lambda^\prime(\psi_\lambda)\Gradg\delt\psi_\lambda\cdot\boldsymbol{w}_\lambda\dG - \intO F_2^{\prime\prime}(\phi_\lambda)\abs{\delt\phi_\lambda}^2\dx - \intG G_2^{\prime\prime}(\psi_\lambda)\abs{\delt\psi_\lambda}^2\dG. 
	\end{split}
 \end{align}
 The first term on the right-hand side of \eqref{EST:1:refined} can be estimated as in \eqref{Est:delt:pp:Ka}, while the second and third term are bounded as in \eqref{Est:pot:term}.
Then, for the last two terms we use again the weak formulation for $(\delt\phi_\lambda,\delt\psi_\lambda)$ to infer that
\begin{align}\label{Est:delt:pp:L^2_refined}
        &\abs{- \intO F_2^{\prime\prime}(\phi_\lambda)\abs{\delt\phi_\lambda}^2\dx - \intG G_2^{\prime\prime}(\psi_\lambda)\abs{\delt\psi_\lambda}^2\dG} \nonumber \\
        &\quad\leq C_0\norm{(\delt\phi_\lambda,\delt\psi_\lambda)}_{\mathcal{L}^2} \nonumber \\
        &\quad = C_0\intO \phi_\lambda\boldsymbol{v}_\lambda\cdot\Gradg\delt\phi_\lambda\dx + C_0\intG \psi_\lambda\boldsymbol{w}_\lambda\cdot\Gradg\delt\psi_\lambda - C_0\big((\mu_\lambda˛\theta_\lambda),(\delt\phi_\lambda,\delt\psi_\lambda)\big)_{L,\beta} \\
        &\quad\leq C_0\norm{(\phi_\lambda,\psi_\lambda)}_{\mathcal{L}^\infty}\norm{(\delt\phi_\lambda,\delt\psi_\lambda)}_{K,\alpha}\norm{(\boldsymbol{v}_\lambda,\boldsymbol{w}_\lambda)}_{\mathcal{L}^2} + C_0\norm{(\mu_\lambda,\theta_\lambda)}_{L,\beta}\norm{(\delt\phi_\lambda,\delt\psi_\lambda)}_{K,\alpha} \nonumber \\
        &\quad\leq \frac16\norm{(\delt\phi_\lambda,\delt\psi_\lambda)}_{K,\alpha}^2 + C_0\big(\norm{(\boldsymbol{v}_\lambda,\boldsymbol{w}_\lambda)}_{\mathcal{L}^2}^2 + \norm{(\mu_\lambda,\theta_\lambda)}_{L,\beta}^2\big) + C_0\norm{(\boldsymbol{v}_\lambda,\boldsymbol{w}_\lambda)}_{\mathcal{L}^2}^2\norm{(\mu_\lambda,\theta_\lambda)}_{L,\beta}^2, \nonumber
\end{align}
where we made use of the bounds \eqref{PP:Lambda:uniform}. Thus, combining the estimates \eqref{Est:delt:pp:Ka} and \eqref{Est:pot:term} with the now refined estimate \eqref{Est:delt:pp:L^2_refined}, we find the differential inequality
\begin{align}\label{PreGronwall:refined}
    \begin{split}
        &\ddt\frac12\norm{(\mu_\lambda,\theta_\lambda)}_{L,\beta}^2 + \frac12\norm{(\delt\phi_\lambda,\delt\psi_\lambda)}_{K,\alpha}^2 \\
        &\quad\leq C_0\big(\norm{(\boldsymbol{v}_\lambda,\boldsymbol{w}_\lambda)}_{\mathcal{H}^1}^2 + \norm{(\mu_\lambda,\theta_\lambda)}_{L,\beta}^2\big) + C_0\norm{(\boldsymbol{v}_\lambda,\boldsymbol{w}_\lambda)}_{\mathcal{H}^1}^2\norm{(\mu_\lambda,\theta_\lambda)}_{L,\beta}^2.
    \end{split}
\end{align}
Applying Gronwall's lemma yields
\begin{align*}
    \norm{(\mu_\lambda(t),\theta_\lambda(t))}_{L,\beta}^2 &\leq C_0\norm{(\mu_\lambda(0),\theta_\lambda(0))}_{L,\beta}^2\exp\Big(C_0\int_0^t\norm{(\mu_\lambda,\theta_\lambda)}_{L,\beta}^2 + \norm{(\boldsymbol{v}_\lambda,\boldsymbol{w}_\lambda)}_{\mathcal{H}^1}^2\dtau\Big) \\
    &\qquad + C_0\int_0^t\norm{(\boldsymbol{v}_\lambda,\boldsymbol{w}_\lambda)}_{\mathcal{H}^1}^2\exp\Big(C_0\int_s^t\norm{(\mu_\lambda,\theta_\lambda)}_{L,\beta}^2 + \norm{(\boldsymbol{v}_\lambda,\boldsymbol{w}_\lambda)}_{\mathcal{H}^1}^2\dtau\Big)\ds.
\end{align*}
By using \eqref{Est:vw:ep}, \eqref{Est:ApproxIC:MT:H^1} and \eqref{Est:Gronwall:Refined}, we find
\begin{align}\label{Est:Mt:Lb:infty:refined}
    \begin{split}
        \sup_{t\in[0,T]}\norm{(\mu_\lambda(t),\theta_\lambda(t))}_{L,\beta}^2 &\leq C_0\Big(1 + \norm{(\mu_0,\theta_0)}_{L,\beta}^2 + \int_0^T\norm{(\boldsymbol{v},\boldsymbol{w})}_{\mathcal{H}^1}^2\ds\Big)\\
    &\quad\times\exp\Big(C_0\int_0^T\norm{(\boldsymbol{v},\boldsymbol{w})}_{\mathcal{H}^1}^2\ds\Big).
    \end{split}
\end{align}
Thus, integrating \eqref{PreGronwall:refined} in time from $0$ to $T$, and exploiting \eqref{Est:vw:ep}, \eqref{Est:Gronwall:Refined} and \eqref{Est:Mt:Lb:infty:refined}, we deduce that
\begin{align}\label{Est:PP:delt:Ka:refined}
    \begin{split}
        \int_0^T\norm{(\delt\phi_\lambda,\delt\psi_\lambda)}_{K,\alpha}^2\ds &\leq C_0\Big(1 + \norm{(\mu_0,\theta_0)}_{L,\beta} + \int_0^T \norm{(\boldsymbol{v},\boldsymbol{w})}_{\mathcal{H}^1}^2\ds\Big)\\
        &\quad\times\Big(1 + \int_0^T \norm{(\boldsymbol{v},\boldsymbol{w})}_{\mathcal{H}^1}^2\ds\Big)\exp\Big(C_0\int_0^T \norm{(\boldsymbol{v},\boldsymbol{w})}_{\mathcal{H}^1}^2\ds\Big).
    \end{split}
\end{align}
Finally, using elliptic regularity theory for systems with bulk-surface coupling once more, we get
\begin{align*}
    \norm{(\mu_\lambda - \beta\mean{\mu_\lambda}{\theta_\lambda},\theta_\lambda - \mean{\mu_\lambda}{\theta_\lambda})}_{\mathcal{H}^3} \leq C_0\big(\norm{(\delt\phi_\lambda,\delt\psi_\lambda)}_{K,\alpha} + \norm{(\boldsymbol{v}_\lambda,\boldsymbol{w}_\lambda)}_{\mathcal{H}^1}\big)
\end{align*}
from which we readily conclude that
\begin{align}\label{Est:MT:Grad:H^2:Refined}
    \begin{split}
        \int_0^T \norm{(\Grad\mu_\lambda,\Gradg\theta_\lambda)}_{\mathcal{H}^2}^2\ds &\leq C_0\Big(1 + \norm{(\mu_0,\theta_0)}_{L,\beta} + \int_0^T \norm{(\boldsymbol{v},\boldsymbol{w})}_{\mathcal{H}^1}^2\ds\Big)\\
        &\quad\times\Big(1 + \int_0^T \norm{(\boldsymbol{v},\boldsymbol{w})}_{\mathcal{H}^1}^2\ds\Big)\exp\Big(C_0\int_0^T \norm{(\boldsymbol{v},\boldsymbol{w})}_{\mathcal{H}^1}^2\ds\Big).
    \end{split}
\end{align}
Using the fact that norms are weakly lower semi-continuous, the estimates \eqref{HighReg:Est:MT:Lb:sup} and \eqref{HighReg:Est:PP:Ka:L^2} follow from the convergences shown in Step 7 as well as the estimates \eqref{Est:Mt:Lb:infty:refined}, \eqref{Est:PP:delt:Ka:refined} and \eqref{Est:MT:Grad:H^2:Refined}. This means that all claims are established, and therefore, the proof of Theorem~\ref{Theorem:HighReg} is complete.
\end{proof}

\section*{Acknowledgement}
\noindent
AG was supported by MUR grant Dipartimento di Eccellenza
2023-2027 of Dipartimento di Matematica, Politecnico di Milano, and by ``INdAM-GNAMPA Project", CUP E5324001950001. \\
PK and JS were supported by the Deutsche Forschungsgemeinschaft (DFG, German Research Foundation) – Project 52469428, and partially supported by the Deutsche Forschungsgemeinschaft (DFG, German Research Foundation) – RTG 2339. The support is gratefully acknowledged.

\section*{Conflict of Interests and Data Availability Statement}

There are no conflicts of interest.

There are no data associated with the manuscript.

\bibliographystyle{abbrv}
\bibliography{GKS.bib}
\end{document}